\newcommand{\g}{\mathfrak{g}}
\newcommand{\HH}{\mathcal{H}}
\newcommand{\inner}[2]{\langle #1, #2 \rangle}
\newcommand{\iso}{\cong}
\renewcommand{\mid}{:}
\newcommand{\RC}{\operatorname{RC}} 
\newcommand{\hwRC}{\RC^*} 
\newcommand{\sgn}{\!\operatorname{-sgn}}
\newcommand{\wt}{\mathrm{wt}}
\newcommand{\ZZ}{\mathbf{Z}}
\newcommand{\virtual}[1]{\widehat{#1}}
\newcommand{\folding}{\searrow}
\lstdefinelanguage{Sage}[]{Python}
{morekeywords={False,sage,True},sensitive=true}
\definecolor{dblackcolor}{rgb}{0.0,0.0,0.0}
\definecolor{dbluecolor}{rgb}{0.01,0.02,0.7}
\definecolor{dgreencolor}{rgb}{0.2,0.4,0.0}
\definecolor{dgraycolor}{rgb}{0.30,0.3,0.30}
\protected\def\specialmergetwolists{%
  \begingroup
  \@ifstar{\def\cnta{1}\@specialmergetwolists}
    {\def\cnta{0}\@specialmergetwolists}%
}
\def\@specialmergetwolists#1#2#3#4{%
  \def\tempa##1##2{%
    \edef##2{%
      \ifnum\cnta=\@ne\else\expandafter\@firstoftwo\fi
      \unexpanded\expandafter{##1}%
    }%
  }%
  \tempa{#2}\tempb\tempa{#3}\tempa
  \def\cnta{0}\def#4{}%
  \foreach \x in \tempb{%
    \xdef\cnta{\the\numexpr\cnta+1}%
    \gdef\cntb{0}%
    \foreach \y in \tempa{%
      \xdef\cntb{\the\numexpr\cntb+1}%
      \ifnum\cntb=\cnta\relax
        \xdef#4{#4\ifx#4\empty\else,\fi\x#1\y}%
        \breakforeach
      \fi
    }%
  }%
  \endgroup
}
\DeclareDocumentCommand\rpp{ m m g }{
	\foreach \x [count=\s from 1] in {#1}{
	        {\ifnum\s=1
	                \draw (0,-\s)--(\x,-\s);
	                \fi}
	   \draw (0,-\s-1) to (\x,-\s-1);
	   \foreach \y in {0, ..., \x} {\draw (\y,-\s)--(\y,-\s-1);}
	}
	\specialmergetwolists{/}{#1}{#2}\ziplist
	\foreach \x/\y [count=\yi from 1] in \ziplist{
	    \node[anchor=west,font=\small] at (\x,-\yi - .5) {$\y$};
	}
	\IfValueT {#3}
	{\foreach \z [count=\zi from 1] in {#3} {\node[anchor=east,font=\small] at (0,-\zi - .5) {$\z$};}}
	{}
}
\theoremstyle{plain}
\newtheorem{thm}{Theorem}[section]
\newtheorem{lemma}[thm]{Lemma}
\newtheorem{conj}[thm]{Conjecture}
\newtheorem{prop}[thm]{Proposition}
\newtheorem{cor}[thm]{Corollary}
\theoremstyle{definition}
\newtheorem{dfn}[thm]{Definition}
\newtheorem{ex}[thm]{Example}
\newtheorem{remark}[thm]{Remark}
\newtheorem{prob}[thm]{Open Problem}
\numberwithin{equation}{section}
\numberwithin{figure}{section}
\numberwithin{table}{section}
\begin{document}
\title{A rigged configuration model for $B(\infty)$}
\author{Ben Salisbury}
\address{Department of Mathematics, Central Michigan University, Mt. Pleasant, MI 48859}
\email{ben.salisbury@cmich.edu}
\urladdr{http://people.cst.cmich.edu/salis1bt/}
\author{Travis Scrimshaw}
\address{Department of Mathematics, University of California, Davis, CA 95616}
\email{tscrim@ucdavis.edu}
\urladdr{https://www.math.ucdavis.edu/~scrimsha/}
\thanks{T.S. was partially supported by NSF grant OCI-1147247.}
\keywords{crystal, rigged configuration, quantum group}
\subjclass[2010]{05E10, 17B37}

\begin{abstract}
We describe a combinatorial realization of the crystals $B(\infty)$ and $B(\lambda)$ using rigged configurations in all symmetrizable Kac-Moody types up to certain conditions.  This includes all simply-laced types and all non-simply-laced finite and affine types.
\end{abstract}

\maketitle

\section{Introduction}
Crystal basis theory is an elegant and fruitful subject born out of the theory of quantum groups.  Defined by Kashiwara in the early 1990s \cite{K90,K91}, crystals provide a natural combinatorial framework to study the representations of Kac-Moody algebras (including classical Lie algebras) and their associated quantum groups.  Their applications span many areas of mathematics, including representation theory, algebraic combinatorics, automorphic forms, and mathematical physics, to name a few.

The study of crystal bases has led researchers to develop different combinatorial models for crystals which yield suitable settings to studying a particular aspect of the representation theory of quantum groups.  For example, highest weight crystals (which are combinatorial skeletons of an irreducible highest weight module over a quantum group) can be modeled using generalized Young tableaux \cite{KM94,KN94}, using the Littelmann path model \cite{L94,L95,L95-2,L97}, using alcove paths \cite{LP07,LP08} or alcove walks \cite{R06}, using geometric methods \cite{BG01,GL05,KS97}, and many others.  The choice of using one model over the other usually depends on the underlying question at hand (and/or on the preference of the author).  In concert with the descriptions for the highest weight crystals, there are several known realizations of the (infinite) crystal $B(\infty)$ (which is a combinatorial skeleton for the Verma module with highest weight $0$), both in combinatorial and geometric settings, which have various applications.
Combinatorially describing the crystal $B(\infty)$ in affine types is still a work in progress (see \cite{KS10,KS13} for a generalization of the tableaux model to the affine setting in certain types), so another combinatorial model for $B(\infty)$ in affine types may prove useful.

Our choice of model will be that of \emph{rigged configurations}, which arise naturally as indexing the eigenvalues and eigenvectors of a Hamiltonian of a statistical model~\cite{B31,KKRY86,KRY86}.  On the other hand, these eigenvectors may also be indexed by one-dimensional lattice paths~\cite{B89,HKKOTY99,HKOTT02,NY97,SW99}, which can be interpreted as highest weight vectors in a tensor product of certain crystals.  In recent years, the implied connection between highest weight vectors in tensor products of Kirillov-Reshetikhin crystals and rigged configurations has been worked out~\cite{OSS03,OSS03II,Sakamoto13,S06,SW10}.

As we will show, the rigged configuration model has simple combinatorial rules for describing the structure which work in all finite, affine, and all simply-laced Kac-Moody types. These combinatorial rules are only based on the nodes of the Dynkin diagram and their neighbors. This allows us to easily describe the embeddings of $B(\lambda)$ into $B(\mu)$, where $\lambda_a \leq \mu_a$ for all indices $a$. Moreover, we can easily describe the so-called virtualization of $B(\lambda)$ inside of a highest weight crystal of simply-laced type via a diagram folding.

The purpose of this paper is to extend the crystal structure on highest weight crystals in finite type in terms of rigged configurations \cite{DS06,OSS13,Sakamoto13,S06,SW10} to other types and to a crystal model for $B(\infty)$ in terms of rigged configurations.  In slightly more detail, the crystal $B(\infty)$ is a direct limit of all highest weight crystals, so by relaxing a certain admissibility condition on elements of the highest weight crystal, we may obtain a representative of an element of $B(\infty)$ given by a rigged configuration.  An added perk of describing $B(\infty)$ using rigged configurations is that the description is type-independent.  However our proofs are \emph{almost} type-independent as we can do our proofs uniformly across all simply-laced finite types, but there will be some changes in the extension to non-simply-laced finite types and then, again, when extending outside of finite type.

The organization of this paper goes as follows. Section \ref{sec:background} gives a background on crystals and rigged configurations. 
In Section~\ref{sec:simplyfinite}, we describe the rigged configuration model for $B(\infty)$ for simply-laced finite types. 
In Section~\ref{sec:arbitrary_sl}, we extend our model for arbitrary simply-laced types.
We extend our model to all finite, affine, and certain indefinite (symmetrizable) types in Section~\ref{sec:non-simply-laced}.
We describe how highest weight crystals sit inside our $B(\infty)$ model using rigged configurations in Section~\ref{sec:projection}.

\medskip
{\bf Notational remark.} The notation $\g$ may denote different objects in different sections, but we will make this clear near the beginning of each (sub)section.

\section{Background}\label{sec:background}
\subsection{Crystals}

Let $\g$ be a symmetrizable Kac-Moody algebra with index set $I$, generalized Cartan matrix $A = (A_{ij})_{i,j\in I}$, weight lattice $P$, root lattice $Q$, fundamental weights $\{\Lambda_i \mid i \in I\}$, simple roots $\{\alpha_i \mid i\in I\}$, and simple coroots $\{h_i \mid i\in I\}$.  There is a canonical pairing $\langle\ ,\ \rangle\colon P^\vee \times P \longrightarrow \ZZ$ defined by $\langle h_i, \alpha_j \rangle = A_{ij}$, where $P^{\vee}$ is the dual weight lattice.

An \emph{abstract $U_q(\g)$-crystal} is a nonempty set $B$ together with maps
\[
\wt \colon B \longrightarrow P, \ \ \
\varepsilon_i, \varphi_i\colon B \longrightarrow \ZZ\sqcup\{-\infty\}, \ \ \
e_i, f_i \colon B \longrightarrow B\sqcup\{0\},
\]
subject to the conditions
\begin{enumerate}
\item $\varphi_i(b) = \varepsilon_i(b) + \langle h_i, \wt(b)\rangle$ for all $i \in I$,
\item if $b\in B$ satisfies $e_ib \neq 0$, then
\begin{enumerate}
\item $\varepsilon_i(e_ib) = \varepsilon_i(b) - 1$,
\item $\varphi_i(e_ib) = \varphi_i(b) + 1$,
\item $\wt(e_ib) = \wt(b) + \alpha_i$,
\end{enumerate}
\item if $b\in B$ satisfies $f_ib \neq 0$, then
\begin{enumerate}
\item $\varepsilon_i(f_ib) = \varepsilon_i(b) + 1$,
\item $\varphi_i(f_ib) = \varphi_i(b) - 1$,
\item $\wt(f_ib) = \wt(b) - \alpha_i$,
\end{enumerate}
\item $f_ib = b^{\prime}$ if and only if $b = e_i b^{\prime}$ for $b,b^{\prime} \in B$ and $i\in I$,
\item if $\varphi_i(b) = -\infty$ for $b\in B$, then $e_i b = f_i b =0$.
\end{enumerate}
The operators $e_i$ and $f_i$, for $i \in I$, are referred to as the \emph{Kashiwara raising} and \emph{Kashiwara lowering operators}, respectively. See \cite{HK02,K91} for details.

\begin{ex}
For a dominant integral weight $\lambda$, the crystal basis 
\[
B(\lambda) = \{ f_{i_k}\cdots f_{i_1}u_{\lambda} \mid i_1,\ldots,i_k \in I,\ k\in\ZZ_{\ge0} \} \setminus \{0\}
\] 
of an irreducible, highest weight $U_q(\g)$-module $V(\lambda)$ is an abstract $U_q(\g)$-crystal. The crystal $B(\lambda)$ is characterized by the following properties.
\begin{enumerate}
\item The element $u_\lambda \in B(\lambda)$ is the unique element such that $\wt(u_\lambda) = \lambda$.
\item For all $i \in I$, $e_i u_{\lambda} = 0$.
\item For all $i \in I$, $f_i^{\inner{h_i}{\lambda} + 1} u_{\lambda} = 0$. 
\end{enumerate}
\end{ex}

\begin{ex}
The crystal basis
\[
B(\infty) = \{f_{i_k}\cdots f_{i_1} u_{\infty} \mid i_1,\dots,i_k\in I,\ k\in\ZZ_{\ge0}\}
\] 
of the negative half $U_q^-(\g)$ of the quantum group is an abstract $U_q(\g)$-crystal.  Some important properties of $B(\infty)$ are the following.
\begin{enumerate}
\item The element $u_\infty \in B(\infty)$ is the unique element such that $\wt(u_\infty) = 0$.
\item For all $i\in I$, $e_iu_\infty =0$.
\item For any sequence $(i_1,\dots,i_k)$ from $I$, $f_{i_k}\cdots f_{i_1}u_\infty \neq 0$.
\end{enumerate}
\end{ex}

An abstract $U_q(\g)$-crystal is said to be \emph{upper regular} if, for all $b\in B$,
\[
\varepsilon_i(b) = \max\{ k \in\ZZ_{\ge0} \mid e_i^k b \neq 0 \}.
\]
Similarly, an abstract $U_q(\g)$-crystal is said to be \emph{lower regular} if, for all $b\in B$,
\[
\varphi_i(b) = \max\{ k \in \ZZ_{\ge0} \mid f_i^kb \neq 0\}.
\]
If $B$ is both upper regular and lower regular, then we say $B$ is \emph{regular}.  In this latter case, we may depict the entire $i$-string through $b\in B$ diagrammatically as
\[
e_i^{\varepsilon_i(b)}b \overset{i}{\longrightarrow}
e_i^{\varepsilon_i(b)-1}b \overset{i}{\longrightarrow}
\cdots \overset{i}{\longrightarrow}
e_ib \overset{i}{\longrightarrow}
b \overset{i}{\longrightarrow}
f_ib \overset{i}{\longrightarrow}
\cdots \overset{i}{\longrightarrow}
f_i^{\varphi_i(b)-1}b \overset{i}{\longrightarrow}
f_i^{\varphi_i(b)}b.
\]
Note that $B(\lambda)$ is a regular abstract $U_q(\g)$-crystal, but $B(\infty)$ is only upper regular.

Let $B_1$ and $B_2$ be two abstract $U_q(\g)$-crystals.  A \emph{crystal morphism} $\psi\colon B_1 \longrightarrow B_2$ is a map $B_1\sqcup\{0\} \longrightarrow B_2 \sqcup \{0\}$ such that
\begin{enumerate}
\item $\psi(0) = 0$;
\item if $b \in B_1$ and $\psi(b) \in B_2$, then $\wt(\psi(b)) = \wt(b)$, $\varepsilon_i(\psi(b)) = \varepsilon_i(b)$, and $\varphi_i(\psi(b)) = \varphi_i(b)$;
\item for $b \in B_1$, we have $\psi(e_i b) = e_i \psi(b)$ provided $\psi(e_ib) \neq 0$ and $e_i\psi(b) \neq 0$;
\item for $b\in B_1$, we have $\psi(f_i b) = f_i \psi(b)$ provided $\psi(f_ib) \neq 0$ and $f_i\psi(b) \neq 0$.
\end{enumerate}
A morphism $\psi$ is called \emph{strict} if $\psi$ commutes with $e_i$ and $f_i$ for all $i \in I$.  Moreover, a morphism $\psi\colon B_1 \longrightarrow B_2$ is called an \emph{embedding} if the induced map $B_1 \sqcup\{0\} \longrightarrow B_2 \sqcup \{0\}$ is injective.

We say an abstract $U_q(\g)$-crystal is simply a \emph{$U_q(\g)$-crystal} if it is crystal isomorphic to the crystal basis of a $U_q(\g)$-module.

Again let $B_1$ and $B_2$ be abstract $U_q(\g)$-crystals.  The tensor product $B_2 \otimes B_1$ is defined to be the Cartesian product $B_2\times B_1$ equipped with crystal operations defined by
\begin{align*}
e_i(b_2 \otimes b_1) &= \begin{cases}
e_i b_2 \otimes b_1 & \text{if } \varepsilon_i(b_2) > \varphi_i(b_1), \\
b_2 \otimes e_i b_1 & \text{if } \varepsilon_i(b_2) \le \varphi_i(b_1),
\end{cases} \\
f_i(b_2 \otimes b_1) &= \begin{cases}
f_i b_2 \otimes b_1 & \text{if } \varepsilon_i(b_2) \ge \varphi_i(b_1), \\
b_2 \otimes f_i b_1 & \text{if } \varepsilon_i(b_2) < \varphi_i(b_1),
\end{cases} \\ 
\varepsilon_i(b_2 \otimes b_1) &= \max\big( \varepsilon_i(b_2), \varepsilon_i(b_1) - \inner{h_i}{\wt(b_2)} \bigr), \\
\varphi_i(b_2 \otimes b_1) &= \max\big( \varphi_i(b_1), \varphi_i(b_2) + \inner{h_i}{\wt(b_1)} \bigr), \\
\wt(b_2 \otimes b_1) &= \wt(b_2) + \wt(b_1).
\end{align*}

\begin{remark}
Our convention for tensor products is opposite the convention given by Kashiwara in~\cite{K91}.
\end{remark}

More generally if $B_1, \dotsc, B_t$ are regular crystals, to compute the action of the Kashiwara operators on the tensor product $B = B_t \otimes \cdots \otimes B_2 \otimes B_1$, we use the \emph{signature rule}.  Indeed, for $i \in I$ and $b = b_t \otimes \cdots \otimes b_2 \otimes b_1$ in $B$, write
\[
\underbrace{+\cdots+}_{\varphi_i(b_t)}\
\underbrace{-\cdots-}_{\varepsilon_i(b_t)}\
\cdots\
\underbrace{+\cdots+}_{\varphi_i(b_1)}\
\underbrace{-\cdots-}_{\varepsilon_i(b_1)}\ .
\]
From the above sequence, successively delete any $(-,+)$-pair to obtain a sequence
\[
i\sgn(b) :=
\underbrace{+\cdots+}_{\varphi_i(b)}\
\underbrace{-\cdots-}_{\varepsilon_i(b)}
\ .
\]
Suppose $1 \leq j_-, j_+ \leq t$ are such that $b_{j_-}$ contributes the leftmost $-$ in $i\sgn(b)$ and $b_{j_+}$ contributes the rightmost $+$ in $i\sgn(b)$.  Then 
\begin{align*}
e_i b &= b_t \otimes \cdots \otimes b_{j_-+1} \otimes e_ib_{j_-} \otimes b_{j_--1} \otimes \cdots \otimes b_1, \\
f_i b &= b_t \otimes \cdots \otimes b_{j_++1} \otimes f_ib_{j_+} \otimes b_{j_+-1} \otimes \cdots \otimes b_1.
\end{align*}

Let $\mathscr{C}$ denote the category of abstract $U_q(\g)$-crystals.  In~\cite{K02}, Kashiwara showed that direct limits exist in $\mathscr{C}$.  Indeed, let $\{B_j\}_{j\in J}$ be a directed system of crystals and let $\psi_{k,j}\colon B_j \longrightarrow B_k$, $j\le k$, be a crystal morphism (with $\psi_{j,j}$ being the identity map on $B_j$) such that $\psi_{k,j} \psi_{j,i} = \psi_{k,i}$.  Let $\vec{B} = \varinjlim B_j$ be the direct limit of this system and let $\psi_j \colon B_j \longrightarrow \vec{B}$.  Then $\vec{B}$ has a crystal structure induced from the crystals $\{B_j\}_{j \in J}$.  Indeed, for $\vec{b} \in \vec{B}$ and $i\in I$, define $e_i\vec{b}$ to be $\psi_j(e_i b_j)$ if there exists $b_j \in B_j$ such that $\psi_j(b_j) = \vec{b}$ and $e_i(b_j) \neq 0$.  This definition does not depend on the choice of $b_j$.  If there is no such $b_j$, then set $e_i\vec{b} = 0$.  The definition of $f_i\vec{b}$ is similar.  Moreover, the functions $\wt$, $\varepsilon_i$, and $\varphi_i$ on $B_j$ extend to functions on $\vec{B}$.

\subsection{Rigged configurations}
Let $\g$ be a symmetrizable Kac-Moody algebra with index set $I$.  Set $\HH = I \times \ZZ_{>0}$.  Consider a multiplicity array
\[
L = \big(L_i^{(a)} \in \ZZ_{\ge0} \mid (a,i) \in \HH\big)
\]
and a dominant integral weight $\lambda$ of $\g$.  We call a sequence of partitions $\nu = \{ \nu^{(a)} \mid a \in I \}$ an $(L, \lambda)$-\emph{configuration} if 
\begin{equation}\label{LL-config}
\sum_{(a,i)\in\HH} i m_i^{(a)} \alpha_a = \sum_{(a,i)\in\HH} i L_i^{(a)} \Lambda_a - \lambda,
\end{equation}
where $m_i^{(a)}$ is the number of parts of length $i$ in the partition $\nu^{(a)}$.  The set of all such $(L,\lambda)$-configurations is denoted $C(L,\lambda)$.  To an element $\nu \in C(L,\lambda)$, define the \emph{vacancy number} of $\nu$ to be 
\begin{equation}
\label{eq:vacancy_numbers}
p_i^{(a)} = p_i^{(a)}(\nu) = \sum_{j \geq 0} \min(i,j) L_j^{(a)} - \sum_{(b,j) \in \HH} \frac{A_{ab}}{\gamma_b} \min(\gamma_a i, \gamma_b j) m_j^{(b)},
\end{equation}
where $\{\gamma_a : a \in I \}$ are some set of positive integers.  If $\g$ is of simply-laced type, we take $\gamma_a = 1$ for all $a\in I$.

Recall that a partition is a multiset of integers (typically sorted in decreasing order).  A \emph{rigged partition} is a multiset of pairs of integers $(i, x)$ such that $i > 0$ (typically sorted under decreasing lexicographic order).  Each $(i,x)$ is called a \emph{string}, where $i$ is called the length or size of the string and $x$ is the \emph{label}, \emph{rigging}, or \emph{quantum number} of the string.  Finally, a \emph{rigged configuration} is a pair $(\nu, J)$ where $\nu \in C(L,\lambda)$ and $J = \big( J_i^{(a)} \big)_{(a, i) \in \HH}$ where each $J_i^{(a)}$ the weakly decreasing sequence of riggings of strings of length $i$ in $\nu^{(a)}$. We call a rigged configuration \emph{valid} if every label $x \in J_i^{(a)}$ satisfies the inequality $p_i^{(a)} \geq x$ for all $(a, i) \in \HH$. We say a rigged configuration is \emph{highest weight} if $x \geq 0$ for all labels $x$. Define the \emph{colabel} or \emph{coquantum number} of a string $(i,x)$ to be $p_i^{(a)} - x$.  
For brevity, we will often denote the $a$th part of $(\nu,J)$ by $(\nu,J)^{(a)}$ (as opposed to $(\nu^{(a)},J^{(a)})$).

\begin{ex}\label{ex:runningrig}
Rigged configurations will be depicted as sequences of partitions with parts labeled on the left by the corresponding vacancy number and labeled on the right by the corresponding rigging.  For example, 
\[
\begin{tikzpicture}[scale=.35,anchor=top]
 \rpp{2,2}{-1,-1}{-1,-1}
 \begin{scope}[xshift=6cm]
 \rpp{2,2,1}{1,1,1}{1,1,1}
 \end{scope}
 \begin{scope}[xshift=12cm]
 \rpp{2,2,1,1}{0,-2,0,0}{0,0,0,0}
 \end{scope}
 \begin{scope}[xshift=18cm]
 \rpp{2,1}{0,0}{0,0}
 \end{scope} 
 \begin{scope}[xshift=24cm]
 \rpp{2,1}{0,0}{0,0}
 \end{scope}
\end{tikzpicture}
\]
is a rigged configuration with $\g = D_5$ and $L$ is given by $L_2^{(1)} = L_1^{(2)} = L_1^{(3)} = 1$ with all other $L_i^{(a)} = 0$.
\end{ex}

Denote by $\hwRC(L, \lambda)$ the set of valid highest weight rigged configurations $(\nu, J)$ such that $\nu \in C(L, \lambda)$. In~\cite{S06}, an abstract $U_q(\g)$-crystal structure was given to rigged configurations, which we recall first by defining the Kashiwara operators.

\begin{dfn}
\label{def:rc_crystal_ops}
Let $(\nu, J)$ be a valid rigged configuration. Fix $a\in I$ and let $x$ be the smallest label of $(\nu,J)^{(a)}$.  
\begin{enumerate}
\item If $x \geq 0$, then set $e_a(\nu,J) = 0$. Otherwise, let $\ell$ be the minimal length of all strings in $(\nu,J)^{(a)}$ which have label $x$.  The rigged configuration $e_a(\nu,J)$ is obtained by replacing the string $(\ell, x)$ with the string $(\ell-1, x+1)$ and changing all other labels so that all colabels remain fixed.
\item If $x > 0$, then add the string $(1,-1)$ to $(\nu,J)^{(a)}$.  Otherwise, let $\ell$ be the maximal length of all strings in $(\nu,J)^{(a)}$ which have label $x$.   Replace the string $(\ell, x)$ by the string $(\ell+1, x-1)$ and change all other labels so that all colabels remain fixed.  If the result is a valid rigged configuration, then it is $f_a(\nu, J)$ .  Otherwise $f_a(\nu, J) = 0$.
\end{enumerate}
\end{dfn}

Let $\RC(L, \lambda)$ denote the set generated by $\hwRC(L, \lambda)$ by the Kashiwara operators. For $(\nu,J) \in \RC(L, \lambda)$, if $f_a$ adds a box to a string of length $\ell$ in $(\nu,J)^{(a)}$, then the vacancy numbers in simply-laced type are changed using the formula
\begin{equation}\label{f-vacancy}
p_i^{(b)} = \begin{cases}
 p_i^{(b)} & \text{if } i \le \ell,\\
 p_i^{(b)} - A_{ab} & \text{if } i > \ell.
\end{cases}
\end{equation}
On the other hand, if $e_a$ removes a box from a string of length $\ell$, then the vacancy numbers must be changed using
\begin{equation}\label{e-vacancy}
p_i^{(b)} = \begin{cases}
 p_i^{(b)} & \text{if } i < \ell,\\
 p_i^{(b)} + A_{ab} & \text{if } i \geq \ell.
\end{cases}
\end{equation}

Let $\RC(L)$ be the closure under the Kashiwara operators of the set $\hwRC(L) = \bigcup_{\lambda\in P^+} \hwRC(L, \lambda)$. Lastly, the weight map $\wt \colon \RC(L) \longrightarrow P$ is defined as
\begin{equation}\label{RC_weight}
\wt(\nu,J) = \sum_{(a,i) \in \HH} i\big( L_i^{(a)}\Lambda_a - m_i^{(a)}\alpha_a\big).
\end{equation}

\begin{ex}
Let $(\nu,J)$ be the rigged configuration from Example~\ref{ex:runningrig}.  Then
\[
e_3(\nu,J) = 
\begin{tikzpicture}[scale=.35,baseline=-20]
 \rpp{2,2}{-1,-1}{-1,-1}
 \begin{scope}[xshift=6cm]
 \rpp{2,2,1}{0,0,1}{0,0,1}
 \end{scope}
 \begin{scope}[xshift=12cm]
 \rpp{2,1,1,1}{2,0,0,0}{2,0,0,0}
 \end{scope}
 \begin{scope}[xshift=18cm]
 \rpp{2,1}{-1,0}{-1,0}
 \end{scope} 
 \begin{scope}[xshift=24cm]
 \rpp{2,1}{-1,0}{-1,0}
 \end{scope}
\end{tikzpicture}
\]
and
\[
f_2(\nu,J) = 
\begin{tikzpicture}[scale=.35,baseline=-20]
 \rpp{2,2}{0,0}{0,0}
 \begin{scope}[xshift=6cm]
 \rpp{2,2,1,1}{-1,-1,-1,-1}{-1,-1,-1,-1}
 \end{scope}
 \begin{scope}[xshift=12cm]
 \rpp{2,2,1,1}{1,-1,1,1}{1,1,1,1}
 \end{scope}
 \begin{scope}[xshift=18cm]
 \rpp{2,1}{0,0}{0,0}
 \end{scope} 
 \begin{scope}[xshift=24cm]
 \rpp{2,1}{0,0}{0,0}
 \end{scope}
\end{tikzpicture}.
\]
Also we have
\begin{align*}
\wt\bigl((\nu, J)\bigr) & = 2\Lambda_1 + \Lambda_2 + \Lambda_3 - 4 \alpha_1 - 5 \alpha_2 - 6 \alpha_3 - 3 \alpha_4 - 3 \alpha_5
\\ & = -\Lambda_1 + \Lambda_2,
\\ \wt\bigl(e_3 (\nu, J)\bigr) & = -\Lambda_1 + 2 \Lambda_3 - \Lambda_4 - \Lambda_5 = -\Lambda_1 + \Lambda_2 + \alpha_3,
\\ \wt\bigl(f_2 (\nu, J)\bigr) & = -\Lambda_2 + \Lambda_3 = -\Lambda_1 + \Lambda_2 - \alpha_2,
\end{align*}
\end{ex}

\begin{thm}[{\cite[Thm.\ 3.7]{S06}}]\label{S06-thm}
Let $\g$ be a simply-laced Lie algebra.  For $(\nu, J) \in \hwRC(L, \lambda)$, let $X_{(\nu, J)}$ be the graph generated by $(\nu, J)$ and $e_a, f_a$ for $a \in I$.  Then $X_{(\nu, J)}$ is isomorphic to the crystal graph $B(\lambda)$ as $U_q(\g)$-crystals.
\end{thm}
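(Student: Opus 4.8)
This is the result established in~\cite{S06}; I sketch the approach one would take. The aim is to show that $X_{(\nu,J)}$, equipped with the operators of Definition~\ref{def:rc_crystal_ops} and the weight function~\eqref{RC_weight}, is a connected, regular abstract $U_q(\g)$-crystal that satisfies the Stembridge local axioms characterizing simply-laced highest weight crystals, and then that its highest weight vertex has weight $\lambda$; the isomorphism $X_{(\nu,J)} \iso B(\lambda)$ follows.

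For the abstract crystal structure, set $\varepsilon_a(\nu,J) = \max\{k \in \ZZ_{\ge 0} : e_a^k(\nu,J) \neq 0\}$, which is finite since each application of $e_a$ removes a box from $\nu^{(a)}$, and $\varphi_a(\nu,J) = \varepsilon_a(\nu,J) + \inner{h_a}{\wt(\nu,J)}$. With these definitions, all of the abstract crystal axioms reduce, by short formal arguments that also use~\eqref{RC_weight}, to a single point: $f_a$ and $e_a$ are mutually inverse wherever they are nonzero, i.e.\ $f_a e_a(\nu,J) = (\nu,J)$ when $e_a(\nu,J) \neq 0$ and $e_a f_a(\nu,J) = (\nu,J)$ when $f_a(\nu,J) \neq 0$. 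This is a direct check, tracking how the vacancy numbers move under~\eqref{f-vacancy} and~\eqref{e-vacancy} and comparing the two branches in Definition~\ref{def:rc_crystal_ops}. One then verifies \emph{lower regularity}, $\varphi_a(\nu,J) = \max\{k : f_a^k(\nu,J) \neq 0\}$; here the convexity of the map $i \mapsto p_i^{(a)}$ enters, showing that, starting from a valid rigged configuration, $f_a$ first returns $0$ after exactly $\varphi_a(\nu,J)$ successful applications. Finally, $X_{(\nu,J)}$ is connected by construction, the vertex $(\nu,J)$ itself is a highest weight vertex since all of its riggings are nonnegative, and it has weight $\lambda$ by combining the configuration condition~\eqref{LL-config} with~\eqref{RC_weight}.

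The core of the argument---and the step I expect to be the main obstacle---is verifying the Stembridge axioms for every pair $a \neq b$ in $I$. When $A_{ab} = 0$, the vacancy numbers $p_i^{(a)}$ do not depend on $(\nu,J)^{(b)}$ and vice versa, so $e_a$ and $e_b$ act on independent data and the commutation $e_a e_b = e_b e_a$, together with the vanishing of the relevant $\varepsilon$-differences, is immediate. The delicate case is $A_{ab} = -1$: applying $e_a$ shifts $p_i^{(b)}$ by $+1$ on a tail of indices $i$, which can alter the minimal label of $(\nu,J)^{(b)}$ and hence $\varepsilon_b$, so one must control this shift to verify the axioms relating $\Delta_a\varepsilon_b$ and $\Delta_b\varepsilon_a$ and the two-step relations ($e_a e_b^2 e_a$ versus $e_b e_a^2 e_b$, with their accompanying $\varepsilon$-conditions). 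This reduces to a finite case analysis on the relative lengths of the strings selected by $e_a$ and $e_b$ and the signs of the minimal labels at each stage. Once the axioms hold, Stembridge's characterization identifies the connected crystal $X_{(\nu,J)}$---which carries the highest weight vertex $(\nu,J)$ of weight $\lambda$---with $B(\lambda)$. (In~\cite{S06} this last step is instead carried out via the statistic-preserving Kerov--Kirillov--Reshetikhin-type bijection between highest weight rigged configurations and highest weight elements of a suitable tensor product of fundamental crystals, shown to intertwine the crystal structures; we will, however, follow the axiomatic route in the extensions of later sections.)
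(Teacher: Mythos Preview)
The paper does not supply its own proof of this theorem; it is quoted as a result from~\cite{S06}. However, the paper does describe the method of~\cite{S06} in Section~\ref{sec:arbitrary_sl}: ``the proof of Theorem~\ref{S06-thm} given in~\cite{S06} is based on the Stembridge axioms~\cite{Stembridge03},'' relying on the convexity-based $\varepsilon/\varphi$ formula of Proposition~\ref{prop:ep_phi}. Your sketch---verify the abstract crystal axioms, establish regularity via convexity of the vacancy numbers, and then check Stembridge's local axioms for the pairs $A_{ab}=0$ and $A_{ab}=-1$---is exactly this route, and matches the paper's account of~\cite{S06}.

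The one genuine error is your closing parenthetical. You assert that~\cite{S06} instead identifies $X_{(\nu,J)}$ with $B(\lambda)$ via a KKR-type bijection with a tensor product of crystals, and that the axiomatic route is only adopted later. This is backwards: per the paper's own description,~\cite{S06} already proceeds through the Stembridge axioms, and indeed the paper exploits precisely this fact to extend Theorem~\ref{S06-thm} beyond finite type in Section~\ref{sec:arbitrary_sl}. You should delete or correct that parenthetical.
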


\begin{remark}
In~\cite{S06}, elements of $X_{(\nu,J)}$ were called unrestricted rigged configurations and the graph $X_{(\nu,J)}$ was denoted $X_{(\overline{\nu},\overline{J})}$.
\end{remark}

We note that our condition for highest weight rigged configurations is equivalent to the rigged configuration being highest weight in the sense of a crystal of type $\g$; i.e., that the action of all $e_a$ on a highest weight rigged configuration is $0$.

In the sequel, set $\nu_\emptyset$ to be the multipartition with all parts empty; that is, set $\nu_{\emptyset} = (\nu^{(1)}, \ldots, \nu^{(n)})$ where $\nu^{(a)}_i = \emptyset$ for all $(a,i) \in \HH$.  Therefore the rigging $J_{\emptyset}$ of $\nu_\emptyset$ must be $J_i^{(a)} = \emptyset$ for all $(a,i) \in \HH$.  
When discussing the highest weight crystals $X_{(\nu_{\emptyset}, J_{\emptyset})}$, we will choose our multiplicity array $L$ to be such that 
\begin{displaymath}
\sum_{(a,i)\in \HH} iL_i^{(a)}\Lambda_a = \lambda.
\end{displaymath}
It is clear that there are several choices of $L$ that may fit this condition, but this does not affect the crystal structure.  

\begin{dfn}\label{RClambda_def}
Define $\RC(\lambda)$ to be $X_{(\nu_\emptyset,J_\emptyset)}$ for any symmetrizable Kac-Moody algebra.
\end{dfn}

\section{Rigged configuration model for $B(\infty)$ in simply-laced finite type}
\label{sec:simplyfinite}

For this section, unless otherwise noted, let $\g$ be a Lie algebra of simply-laced finite type.  We wish to generate a model for $B(\infty)$ with $(\nu_\emptyset,J_\emptyset)$ as its highest weight vector.  By choosing a fixed $\lambda > 0$, for any $(\nu,J) \in \RC(\lambda)$, there exists $k\ge 0$ such that $f_a^k(\nu,J) = 0$ by the validity condition given in Definition~\ref{def:rc_crystal_ops}.  Therefore, we need a modified Kashiwara operator $f_a^{\prime}$ (for $a \in I)$ on rigged configurations to allow the condition $(f_a^{\prime})^k(\nu,J) \neq 0$ for all $k \geq 0$.  To do so, simply define $f_a^{\prime}$ by the same process given in Definition \ref{def:rc_crystal_ops} with the validity condition omitted and choose $\lambda = 0$.  

\begin{dfn}
For any symmetrizable Kac-Moody algebra $\g$ with index set $I$, define $\RC(\infty)$ to be the graph generated by $(\nu_\emptyset,J_\emptyset)$, $e_a$, and $f_a^{\prime}$, for $a \in I$, where $e_a$ acts on elements $(\nu,J)$ in $\RC(\infty)$ using the same procedure as in Definition~\ref{def:rc_crystal_ops}.  
\end{dfn}

The remainder of the crystal structure is given by
\begin{subequations}\label{epphiwt}
\begin{align}
\label{Binf_ep} \varepsilon_a(\nu,J) &= \max\{ k \in \ZZ_{\ge0} \mid  e_a^k(\nu,J) \neq 0 \}, \\ 
\label{Binf_phi} \varphi_a(\nu,J) &= \varepsilon_a(\nu,J) + \langle h_a,\wt(\nu,J)\rangle, \\ 
\label{Binf_wt} \wt(\nu,J) &= -\sum_{(a,i)\in\HH} im_i^{(a)}\alpha_a = -\sum_{a\in I} |\nu^{(a)}|\alpha_a.
\end{align}
\end{subequations}
It is worth noting that, in this case, the definition of the vacancy numbers reduces to
\begin{equation}
p_i^{(a)}(\nu) = p_i^{(a)} = -\sum_{(b,j) \in \HH} A_{ab}\min(i, j) m_j^{(b)}.
\end{equation}

\begin{ex}
Let $\g$ be of type $A_5$ and $(\nu,J)$ be the rigged configuration
\begin{align*}
(\nu,J) &= 
\begin{tikzpicture}[scale=.35,baseline=-18]
 \rpp{1}{-1}{-1}
\begin{scope}[xshift=5cm]
 \rpp{2}{-1}{-2}
\end{scope}
\begin{scope}[xshift=12cm]
 \rpp{1}{1}{0}
\end{scope}
\begin{scope}[xshift=16cm]
 \rpp{1}{-1}{0}
\end{scope}
\begin{scope}[xshift=21cm]
 \rpp{2}{-1}{-3}
\end{scope}
\end{tikzpicture}
\intertext{Then $\wt(\nu,J) = -\alpha_1 - 2\alpha_2 - \alpha_3 - \alpha_4 - 2\alpha_5$,}
e_2(\nu,J) &= 
\begin{tikzpicture}[scale=.35,baseline=-18]
 \rpp{1}{-1}{-1}
\begin{scope}[xshift=5cm]
 \rpp{1}{0}{0}
\end{scope}
\begin{scope}[xshift=12cm]
 \rpp{1}{1}{0}
\end{scope}
\begin{scope}[xshift=16cm]
 \rpp{1}{-1}{0}
\end{scope}
\begin{scope}[xshift=21cm]
 \rpp{2}{-1}{-3}
\end{scope}
\end{tikzpicture}
\intertext{and}
f_2(\nu,J) &= 
\begin{tikzpicture}[scale=.35,baseline=-18]
 \rpp{1}{-1}{-1}
\begin{scope}[xshift=5cm]
 \rpp{3}{-2}{-4}
\end{scope}
\begin{scope}[xshift=12cm]
 \rpp{1}{1}{0}
\end{scope}
\begin{scope}[xshift=16cm]
 \rpp{1}{-1}{0}
\end{scope}
\begin{scope}[xshift=21cm]
 \rpp{2}{-1}{-3}
\end{scope}
\end{tikzpicture}
\end{align*}
\end{ex}

\begin{lemma}\label{RCcrystal}
The set $\RC(\infty)$ is an abstract $U_q(\g)$-crystal with Kashiwara operators $e_a$ and $f_a^{\prime}$ and remaining crystal structure given in equation~\eqref{epphiwt}.
\end{lemma}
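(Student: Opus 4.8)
The plan is to verify the five defining conditions of an abstract $U_q(\g)$-crystal for $B=\RC(\infty)$ (which is nonempty, since it contains $(\nu_\emptyset,J_\emptyset)$) equipped with $e_a$, $f_a'$ and the data in~\eqref{epphiwt}. Condition (1) is literally the definition~\eqref{Binf_phi} of $\varphi_a$. For condition (5), observe that every application of $e_a$ with nonzero output replaces a string $(\ell,x)$ by $(\ell-1,x+1)$, hence removes exactly one box from $\nu^{(a)}$; so $|\nu^{(a)}|$ strictly decreases and $e_a^k(\nu,J)=0$ once $k>|\nu^{(a)}|$. Thus $\varepsilon_a(\nu,J)$ is a well-defined nonnegative integer and $\varphi_a(\nu,J)=\varepsilon_a(\nu,J)+\langle h_a,\wt(\nu,J)\rangle$ is a finite integer, so $-\infty$ never occurs and condition (5) is vacuous. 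The weight conditions (2c) and (3c) are box-counting: by~\eqref{Binf_wt}, since $e_a$ removes one box from $\nu^{(a)}$ and $f_a'$ (adding the string $(1,-1)$, or replacing $(\ell,x)$ by $(\ell+1,x-1)$) adds one box to $\nu^{(a)}$ and touches no other partition, we get $\wt(e_a(\nu,J))=\wt(\nu,J)+\alpha_a$ and $\wt(f_a'(\nu,J))=\wt(\nu,J)-\alpha_a$.

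The substantive point is condition (4): for $b,b'\in\RC(\infty)$, $f_a'b=b'$ if and only if $b=e_ab'$. Since the validity constraint has been dropped, $f_a'$ is defined on all of $\RC(\infty)$ and never returns $0$, so this reduces to the two identities $e_a\bigl(f_a'(\nu,J)\bigr)=(\nu,J)$ for all $(\nu,J)$, and $f_a'\bigl(e_a(\nu,J)\bigr)=(\nu,J)$ whenever $e_a(\nu,J)\neq 0$. I would prove these by the string-and-vacancy-number bookkeeping from the proof of Theorem~\ref{S06-thm} in~\cite{S06}, split into the two cases defining $f_a'$. If the smallest label $x$ of $(\nu,J)^{(a)}$ satisfies $x>0$ (or $(\nu,J)^{(a)}$ is empty), then $f_a'$ adjoins the string $(1,-1)$, which is the unique minimal-length string of the new smallest label $-1$; applying $e_a$ deletes it, and because the changes in~\eqref{f-vacancy} and~\eqref{e-vacancy} cancel at every length, all other strings are restored intact. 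If $x\le 0$, then $f_a'$ replaces the longest minimal-rigging string $(\ell,x)$ by $(\ell+1,x-1)$; since no string of length $>\ell$ in $(\nu,J)^{(a)}$ has rigging $x$, after the colabel-preserving relabeling the new minimal rigging is $x-1$, attained only on strings of length $\ell+1$, so $e_a$ removes a box from such a string and~\eqref{f-vacancy}--\eqref{e-vacancy} again cancel lengthwise, returning $(\nu,J)$ — even when $e_a$ selects a length-$(\ell+1)$ string other than the one just created, because the colabel-preserving rule sends its rigging back to the correct value and so produces the same multiset of strings. The reverse identity $f_a'\bigl(e_a(\nu,J)\bigr)=(\nu,J)$ is handled symmetrically.

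I expect this mutual-inverse verification to be the main obstacle. The only genuinely new feature compared with~\cite{S06} is that $f_a'$ is now total, so one must check that deleting the validity test does not disturb the inversion, and one must treat with care the case in which several strings share the extreme rigging after $f_a'$ is applied.

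Finally, the remaining conditions follow formally. From the identity of maps $e_a^{\,j}\bigl(e_a(\nu,J)\bigr)=e_a^{\,j+1}(\nu,J)$ together with~\eqref{Binf_ep} one gets condition (2a); from $e_a\bigl(f_a'(\nu,J)\bigr)=(\nu,J)$, a special case of condition (4), one gets condition (3a); and then conditions (2b) and (3b) follow by combining (2a), (3a) with condition (1) and the weight changes above, using $\langle h_a,\alpha_a\rangle=A_{aa}=2$. This exhausts the list, so $\RC(\infty)$ is an abstract $U_q(\g)$-crystal.
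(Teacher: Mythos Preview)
Your proposal is correct and follows essentially the same approach as the paper's proof (which in turn follows~\cite[Prop.~9]{Sakamoto13}): the core is the mutual-inverse verification $e_af_a'=\mathrm{id}$ and $f_a'e_a=\mathrm{id}$ via the colabel-preserving bookkeeping in~\eqref{f-vacancy}--\eqref{e-vacancy}, the weight shifts come from box-counting, and the remaining axioms follow formally. One small wording slip: in the case $x>0$ the newly created string $(1,-1)$ need not be the \emph{unique} minimal-label string (an original $(1,1)$ also becomes $(1,-1)$), but you already handle exactly this multiset ambiguity in the $x\le 0$ case, and the same remark applies here.
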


\begin{proof}
This proof here is similar to that given in Proposition~9 of~\cite{Sakamoto13}.  We need to show the following, for $(\nu,J)$ in $\RC(\infty)$.
\begin{enumerate}
\item\label{RCinf-einv} If $e_a(\nu,J) \neq 0$ for $a\in I$, then $f_a^{\prime} e_a(\nu,J) = (\nu,J)$.
\item\label{RCinf-finv} For any $a\in I$, we have $e_a f_a^{\prime}(\nu,J) = (\nu,J)$.
\item\label{RCinf-ewt} If $e_a(\nu,J) \neq 0$ for $a \in I$, then $\wt\bigl(e_a(\nu,J)\bigr) = \wt(\nu,J) + \alpha_a$.
\item\label{RCinf-fwt} For $a\in I$, $\wt\bigl(f_a'(\nu,J)\bigr) = \wt(\nu,J) - \alpha_a$.
\item\label{RCinf-epphi} For $a\in I$, $\varepsilon_a\bigl(f_a^{\prime} (\nu,J)\bigr) = \varepsilon_a(\nu,J) + 1$ and $\varphi_a\bigl(f_a^{\prime} (\nu,J)\bigr) = \varphi_a(\nu,J) - 1$.
\end{enumerate}
Let $(\nu,J)$ be an arbitrary rigged configuration in $\RC(\infty)$.  In what follows, we will suppose that $m_i^{(a)}$ is the number of parts of length $i$ in the partition $\nu^{(a)}$ and that $x$ is the smallest label of $(\nu,J)^{(a)}$.  Set $(\nu',J') = f_a'(\nu,J)$ and $(\nu'',J'') = e_a(\nu,J)$. 

To prove~(\ref{RCinf-einv}), suppose that $(\nu'',J'')$ is obtained from $(\nu,J)$ by changing the string $(\ell,x)$ to $(\ell-1,x+1)$, so that $\ell$ is the minimal length string among all strings with label $x$.  If $i < \ell$ and $(i,y)$ is a string in $(\nu,J)$, then $p_i^{(a)}(\nu'') = p_i^{(a)}(\nu)$ by~\eqref{e-vacancy}.  Thus $(i,y)$ is unaffected by the action of $e_a$, and $y \ge x+1$.  On the other hand, if $i \ge \ell$, then $p_i^{(a)}(\nu'') = p_i^{(a)}(\nu) + 2$ by~\eqref{e-vacancy}.  Thus $(i,y)$ is replaced by the string $(i,y+2)$ under the action of $e_a$ and $y + 2 > x + 1$.  In both cases, $(\ell-1,x+1)$ is the string with minimal label and longest length, so $f_a'$ will change $(\ell-1,x+1)$ to $(\ell,x)$ and $f_a^{\prime} e_a(\nu,J) = (\nu,J)$, as required.

Suppose that $(\nu',J')$ is obtained from $(\nu,J)$ by changing the string $(\ell,x)$ to $(\ell+1,x-1)$, so $\ell$ is the maximal length of all strings with label $x$.  If $i\le \ell$ and $(i,y)$ is a string in $(\nu,J)^{(a)}$, then $p_i^{(a)}(\nu') = p_i^{(a)}(\nu)$ by~\eqref{f-vacancy}. Thus $(i,y)$ is left unaffected by the action of $f_a'$, and $y > x-1$ because $x$ is the smallest label of $(\nu,J)$.  On the other hand, if $i > \ell$, then $p_i^{(a)}(\nu') = p_i^{(a)}(\nu) - 2$ by~\eqref{f-vacancy}.  Thus $(i,y)$ is replaced by $(i,y-2)$ by the action of $f_a'$ and $y-2 \ge x-1$.  In both cases, $(\ell+1,x-1)$ is the string with minimal label and shortest length, so $e_a$ will change $(\ell+1,x-1)$ to $(\ell,x)$ and $e_af_a'(\nu,J) = (\nu,J)$ to prove~(\ref{RCinf-finv}).

For~(\ref{RCinf-ewt}), if $(\nu'',J'') \neq 0$ for some $a\in I$, then $(\nu'',J'')$ is obtained from $(\nu,J)$ by replacing the string $(\ell,x)$ with $(\ell-1,x+1)$, where $\ell$ is the minimal length of all strings in $(\nu,J)^{(a)}$ having label $x$.  Then $|\nu''^{(a)}| = |\nu^{(a)}|-1$ and the result follows.

To see~(\ref{RCinf-fwt}), if $x > 0$, then the string $(1,-1)$ is added to $(\nu,J)^{(a)}$.  Then $|\nu'^{(a)}| = |\nu^{(a)}| + 1$.  On the other hand, if $x \le 0$ and $\ell$ is the maximal length of all strings in $(\nu,J)^{(a)}$ with label $x$, then the string $(\ell,x)$ is replaced by the string $(\ell+1,x-1)$, so $|\nu'^{(a)}| = |\nu^{(a)}| + 1$.  In both cases, the equality $|\nu'^{(a)}| = |\nu^{(a)}|+1$ yields the desired result.

The first part of~(\ref{RCinf-epphi}) follows immediately from the definition.  To see $\varphi_a\bigl(f_a^{\prime} (\nu,J)\bigr) = \varphi_a\bigl((\nu,J)\bigr) - 1$, we have
\begin{align*}
\varphi_a\bigl(f_a^{\prime} (\nu,J)\bigr) & = \inner{h_a}{\wt\bigl(f_a^{\prime} (\nu,J)\bigr)} + \varepsilon_a\bigl(f_a^{\prime} (\nu,J)\bigr)
\\ & = \inner{h_a}{\wt(\nu,J)} - \inner{h_a}{\alpha_a} + \varepsilon_a(\nu,J) + 1
\\ & = \inner{h_a}{\wt(\nu,J)} - 2 + \varepsilon_a(\nu,J) + 1 
\\ & = \varphi_a(\nu,J) - 1. \qedhere
\end{align*}
\end{proof}

\begin{dfn}
\label{def:T_crystal}
For a weight $\lambda$, let $T_\lambda = \{t_\lambda\}$ be the abstract $U_q(\g)$-crystal with operations defined by
\[
e_at_\lambda = f_at_\lambda = 0 , \ \ \ \
\varepsilon_a(t_\lambda) = \varphi_a(t_\lambda) = -\infty, \ \ \ \ 
\wt(t_\lambda) = \lambda.
\]
\end{dfn}

For any abstract $U_q(\g)$-crystal $B$, the tensor product $T_{\lambda} \otimes B$ has the same crystal graph as $B$, but with each weight shifted by $\lambda$ (and appropriate modifications to $\varepsilon_a$ and $\varphi_a$).  Following~\cite{K02}, there is an embedding of crystals
\[
I_{\lambda+\mu,\lambda}\colon  T_{-\lambda}\otimes B(\lambda) \lhook\joinrel\longrightarrow T_{-\lambda-\mu} \otimes B(\lambda+\mu)
\]
which sends $t_{-\lambda}\otimes u_\lambda  \mapsto t_{-\lambda-\mu}\otimes u_{\lambda+\mu}$ and commutes with $e_a$ for each $a\in I$.  Moreover, for any three dominant weights $\lambda$, $\mu$, and $\xi$, we get a commutative diagram
\begin{equation}
\label{eq:directed_system}
\begin{tikzpicture}[xscale=5,yscale=1.5,baseline=0.6cm]
\node (1) at (0,1) {$T_{-\lambda} \otimes B(\lambda)$};
\node (2) at (1,1) {$T_{-\lambda-\mu}\otimes B(\lambda+\mu)$};
\node (3) at (1,0) {$T_{-\lambda-\mu-\xi}\otimes B(\lambda+\mu+\xi)$.};
\path[->,font=\scriptsize]
 (1) edge node[above]{$I_{\lambda+\mu,\lambda}$} (2)
 (1) edge node[below]{$I_{\lambda+\mu+\xi,\lambda}$} (3)
 (2) edge node[right]{$I_{\lambda+\mu+\xi,\lambda+\mu}$} (3);
\end{tikzpicture}
\end{equation}
Using the order on dominant integral weights given by $\mu\le\lambda$ if and only if $\lambda-\mu \in P^+$, the set $\{T_{-\lambda}\otimes B(\lambda)\}_{\lambda\in P^+}$ is a directed system.  

\begin{thm}[\cite{K02}]
We have $\displaystyle B(\infty) = \varinjlim_{\lambda\in P^+} T_{-\lambda}\otimes B(\lambda)$.
\end{thm}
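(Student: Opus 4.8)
The plan is to identify $\vec B := \varinjlim_{\lambda\in P^+} T_{-\lambda}\otimes B(\lambda)$ with $B(\infty)$ by checking that $\vec B$ enjoys the properties characterizing the latter; this identification is exactly the content of \cite{K02}, so in practice one simply cites that paper, but here is how I would organize the argument. First I would unwind the direct limit. Writing $\psi_\lambda\colon T_{-\lambda}\otimes B(\lambda)\hookrightarrow\vec B$ for the canonical maps, we have $\vec B=\bigcup_{\lambda\in P^+}\psi_\lambda\big(T_{-\lambda}\otimes B(\lambda)\big)$, and since $\varepsilon_a(t_{-\lambda})=\varphi_a(t_{-\lambda})=-\infty$ the tensor product rules give $e_a(t_{-\lambda}\otimes b)=t_{-\lambda}\otimes e_a b$ and $f_a(t_{-\lambda}\otimes b)=t_{-\lambda}\otimes f_a b$, so $T_{-\lambda}\otimes B(\lambda)$ has the crystal graph of $B(\lambda)$ with every weight translated by $-\lambda$; in particular the vertex $t_{-\lambda}\otimes u_\lambda$ has weight $0$ and is killed by every $e_a$. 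Since each $I_{\lambda+\mu,\lambda}$ is injective, commutes with the $e_a$, and matches distinguished vertices, the limit acquires a well-defined element $\vec u\in\vec B$ equal to $\psi_\lambda(t_{-\lambda}\otimes u_\lambda)$ for every $\lambda$, with $\wt(\vec u)=0$ and $e_a\vec u=0$; the maps $\wt$, $e_a$, $f_a$ on $\vec B$ are induced from the system in the usual way (so $f_a\psi_\lambda(x)=\psi_\lambda(f_a x)$ whenever $f_a x\neq 0$, and likewise for $e_a$), and $\varepsilon_a$, $\varphi_a$ are then defined on $\vec B$ by upper regularity, $\varepsilon_a(\vec b)=\max\{k\ge 0 : e_a^k\vec b\neq 0\}$ and $\varphi_a(\vec b)=\varepsilon_a(\vec b)+\inner{h_a}{\wt(\vec b)}$; a short check confirms these make $\vec B$ an abstract $U_q(\g)$-crystal.

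Next I would verify that $\vec B$ has the defining features of $B(\infty)$ recalled in Section~\ref{sec:background}. Every $\vec b\in\vec B$ equals $\psi_\lambda(t_{-\lambda}\otimes b)$ with $b=f_{i_k}\cdots f_{i_1}u_\lambda\neq 0$ in $B(\lambda)$ for some word; no intermediate element vanishes, so $\vec b=f_{i_k}\cdots f_{i_1}\vec u$ and $\vec u$ generates $\vec B$. Any vertex other than $\vec u$ is of this form with $k\ge 1$, hence has weight $-(\alpha_{i_1}+\cdots+\alpha_{i_k})\neq 0$, so $\vec u$ is the unique vertex of weight $0$. For the non-vanishing of an arbitrary word $f_{i_k}\cdots f_{i_1}\vec u$, take $\lambda=N\rho$ with $\rho=\sum_{a\in I}\Lambda_a$ and $N$ large (say $N\ge 2k$): an easy induction gives $\wt(f_{i_{j-1}}\cdots f_{i_1}u_\lambda)=N\rho-\alpha_{i_1}-\cdots-\alpha_{i_{j-1}}$, whence the relevant value of $\varphi$ at each step is at least $N-2(k-1)\ge 1$, so $f_{i_k}\cdots f_{i_1}u_\lambda\neq 0$ in $B(\lambda)$; pushing through the injection $\psi_\lambda$ yields $f_{i_k}\cdots f_{i_1}\vec u\neq 0$. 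I would also note that $\vec B$ is upper regular by construction and that every $\varepsilon_a$ takes finite values (equal, on $\psi_\lambda(t_{-\lambda}\otimes b)$, to $\varepsilon_a(b)$ in $B(\lambda)$).

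Finally I would invoke Kashiwara's characterization of $B(\infty)$ \cite{K02} to conclude that $\vec B\cong B(\infty)$ via $\vec u\mapsto u_\infty$; alternatively one can cite Kashiwara's realization of each $B(\lambda)$ as the subcrystal of $B(\infty)$ cut out by the $\ast$-crystal structure, the union of these subcrystals over $\lambda\in P^+$ being all of $B(\infty)$. I expect this last step to be the real obstacle: the soft properties verified above do not by themselves pin down the crystal graph once the rank exceeds one, so the identification genuinely rests on Kashiwara's theorem (and, underlying it, on the fact that $V(\lambda)$ is a quotient of $U_q^-(\g)$ compatibly with crystal bases). In other words, all of the hard content is imported from \cite{K02}; what the construction above contributes is only the recognition that the direct limit of the $T_{-\lambda}\otimes B(\lambda)$ is precisely the object that theorem describes.
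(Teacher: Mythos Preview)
The paper does not supply its own proof of this theorem; it is quoted verbatim from \cite{K02} as a black box and then used as input to the direct-limit argument for $\RC(\infty)$. So there is no in-paper proof to compare your sketch against.

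Your outline is a reasonable reconstruction of why the limit should be $B(\infty)$, and you are right to flag that the ``soft'' properties you verify (unique weight-zero vertex annihilated by all $e_a$, generated under the $f_a$, no $f$-word ever vanishes) do not by themselves characterize $B(\infty)$ among abstract crystals once the rank exceeds one; the identification really does rest on Kashiwara's work, as you say. One minor technical remark: in general symmetrizable type the element $\rho=\sum_{a\in I}\Lambda_a$ need not lie in $P$ when $I$ is infinite, but your nonvanishing argument only needs a dominant $\lambda$ with $\langle h_a,\lambda\rangle$ large for the finitely many indices $a$ appearing in a given word, and such a $\lambda$ is easy to produce.
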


By Theorem~\ref{S06-thm}, each $B(\lambda)$ is $U_q(\g)$-crystal isomorphic to the graph $\RC(\lambda)$ generated by a highest weight rigged configuration $(\nu,J)$ of weight $\lambda$ in $\RC(L)$ and the Kashiwara operators $e_a$ and $f_a$ defined in Definition \ref{def:rc_crystal_ops}.  Thus we have 
\[
\varinjlim_{\lambda\in P^+} T_{-\lambda} \otimes B(\lambda) \cong \varinjlim_{\lambda\in P^+} T_{-\lambda} \otimes \RC(\lambda).
\]
Our goal is to complete the diagram
\begin{equation}\label{B-RC}
\begin{tikzpicture}[xscale=5,yscale=1.5,baseline=0.6cm]
\node (1) at (0,0) {$B(\infty)$};
\node (2) at (0,1) {$\displaystyle\varinjlim_{\lambda\in P^+} T_{-\lambda} \otimes B(\lambda)$};
\node (3) at (1,1) {$\displaystyle\varinjlim_{\lambda\in P^+} T_{-\lambda} \otimes \RC(\lambda)$};
\node (4) at (1,0) {$\RC(\infty)$};
\path[->] (2) edge node[above]{$\cong$} (3);
\path[->,dashed] (1) edge (4);
\draw[-,double distance=1.5pt] (1) -- (2);
\draw[-,dashed,double distance=1.5pt] (3) -- (4);
\end{tikzpicture}
\end{equation}
by proving that the dashed equality on the right side of the square is actually an equality among $U_q(\g)$-crystals.  Then we may define an isomorphism along the bottom of the square by taking the composite map along the top of the diagram.

\begin{lemma}\label{lemma:embed}
Let $\lambda$ and $\mu$ be dominant integral weights, and let
\[
\widetilde I_{\lambda+\mu,\lambda}\colon T_{-\lambda}\otimes \RC(\lambda) \longrightarrow T_{-\lambda-\mu} \otimes \RC(\lambda + \mu)
\]
by $t_{-\lambda}\otimes(\nu,J) \mapsto t_{-\lambda-\mu} \otimes (\nu', J')$, where $(\nu', J') = (\nu,J)$ as rigged configurations but has vacancy numbers considered as an element of $\RC(\lambda+\mu)$.  For $(\nu,J) \in \RC(\lambda)$, the image $(\nu', J')$ is valid in $\RC(\lambda+\mu)$.  Moreover, $\widetilde I_{\lambda+\mu,\lambda}$ is a crystal embedding.
\end{lemma}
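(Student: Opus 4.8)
The plan is to realize $\widetilde I_{\lambda+\mu,\lambda}$ as the ``identity on the underlying combinatorial data'' map and to show that the passage from $\RC(\lambda)$ to $\RC(\lambda+\mu)$ only enlarges the ambient pool of valid rigged configurations. First fix multiplicity arrays $L$ and $M$ with $\sum_{(a,i)\in\HH} iL_i^{(a)}\Lambda_a = \lambda$, $\sum_{(a,i)\in\HH} iM_i^{(a)}\Lambda_a = \lambda+\mu$, and $M_i^{(a)} \ge L_i^{(a)}$ for every $(a,i)\in\HH$; such a pair exists because $\mu\in P^+$ (for instance take $M_1^{(a)} = L_1^{(a)} + \inner{h_a}{\mu}$ and leave every other entry unchanged), and neither $\RC(\lambda)$ nor $\RC(\lambda+\mu)$ depends on these choices. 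In the vacancy number formula~\eqref{eq:vacancy_numbers} (with $\gamma_a = 1$) only the summand $\sum_{j\ge 0}\min(i,j)L_j^{(a)}$ involves the multiplicity array, and it is weakly increasing under $L \mapsto M$, while the remaining summand depends only on $\nu$. Hence for every sequence of partitions $\nu$ and every $(a,i)\in\HH$, the vacancy number $p_i^{(a)}(\nu)$ computed with $M$ is at least the one computed with $L$. Therefore, if $(\nu,J)\in\RC(\lambda)$ is valid, every label $x\in J_i^{(a)}$ is at most $p_i^{(a)}(\nu)$ computed with $L$, hence at most $p_i^{(a)}(\nu)$ computed with $M$, so $(\nu',J') = (\nu,J)$ is a valid rigged configuration in $\RC(\lambda+\mu)$. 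This establishes the first assertion.

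The heart of the argument is the observation that neither the Kashiwara operators nor the riggings they produce depend on the multiplicity array. Indeed, by~\eqref{f-vacancy} and~\eqref{e-vacancy} the change that $e_a$ or $f_a$ inflicts on every vacancy number is governed only by the Cartan matrix and by the length of the string being modified, and the rule choosing which string to modify --- or the decision to adjoin the string $(1,-1)$ --- depends only on comparisons among the riggings currently present. Since $(\nu_\emptyset,J_\emptyset)$ carries no riggings, an induction on the number of operators applied shows that any word in the operators $e_a$ and $f_a$ yields the same multipartition with the same riggings, whether it is computed inside $\RC(\lambda)$ or inside $\RC(\lambda+\mu)$; the elements of $\RC(\lambda)$ are closed under $e_a$, so $e_a$ never destroys validity, and the only discrepancy between the two computations is that $f_a$ may return $0$ in $\RC(\lambda)$ while its counterpart stays valid --- hence nonzero --- in $\RC(\lambda+\mu)$, which by the first paragraph can only make $\RC(\lambda+\mu)$ the larger crystal. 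In particular every element of $\RC(\lambda)$ occurs verbatim inside $\RC(\lambda+\mu)$, so $\widetilde I_{\lambda+\mu,\lambda}$ does map $T_{-\lambda}\otimes\RC(\lambda)$ into $T_{-\lambda-\mu}\otimes\RC(\lambda+\mu)$; it is plainly injective, it commutes with every $e_a$, and it is compatible with every $f_a$ in the sense required of a crystal morphism. Because $\varepsilon_a(t_{-\lambda}) = \varphi_a(t_{-\lambda}) = -\infty$, the Kashiwara operators on either tensor product always act on the rigged-configuration tensor factor, so the preceding statements pass from $\RC(\lambda)$ to the tensor products.

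It remains to verify that $\widetilde I_{\lambda+\mu,\lambda}$ preserves $\wt$, $\varepsilon_a$, and $\varphi_a$. For the weight, \eqref{RC_weight} together with the normalization $\sum_{(a,i)\in\HH} iL_i^{(a)}\Lambda_a = \lambda$ gives $\wt(\nu,J) = \lambda - \sum_{a\in I} \absval{\nu^{(a)}}\alpha_a$ inside $\RC(\lambda)$, so $\wt\bigl(t_{-\lambda}\otimes(\nu,J)\bigr) = -\sum_{a\in I}\absval{\nu^{(a)}}\alpha_a$, which depends only on the underlying multipartition; the identical computation on the $\lambda+\mu$ side returns the same value (agreeing, as it must, with~\eqref{Binf_wt}), so weights are preserved. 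For $\varepsilon_a$, the tensor product rule and $\varepsilon_a(t_{-\lambda}) = \varphi_a(t_{-\lambda}) = -\infty$ give $\varepsilon_a\bigl(t_{-\lambda}\otimes(\nu,J)\bigr) = \varepsilon_a(\nu,J)$, and by Theorem~\ref{S06-thm} the latter equals $\max\{k\ge 0 : e_a^k(\nu,J)\ne 0\}$, which is insensitive to the multiplicity array by the second paragraph; the same reasoning on the $\lambda+\mu$ side produces the same value, so $\varepsilon_a$ is preserved. Finally $\varphi_a$ is preserved because $\varphi_a(b) = \varepsilon_a(b) + \inner{h_a}{\wt(b)}$ holds in any abstract crystal and both $\wt$ and $\varepsilon_a$ have been shown to be preserved. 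Together with the second paragraph, this shows $\widetilde I_{\lambda+\mu,\lambda}$ is a crystal embedding.

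The step I expect to be most delicate is the multiplicity-array independence in the second paragraph: one must check with care that the interplay between the vacancy-number updates~\eqref{f-vacancy}, \eqref{e-vacancy} and the choice of string to be modified (the ``smallest label'', and among those the longest for $f_a$ or the shortest for $e_a$) is truly independent of $L$. A less computational route to the embedding property is also available: transport Kashiwara's embedding $I_{\lambda+\mu,\lambda}$ through the isomorphisms $\RC(\lambda)\cong B(\lambda)$ and $\RC(\lambda+\mu)\cong B(\lambda+\mu)$ of Theorem~\ref{S06-thm}, and use that two crystal morphisms out of the highest weight crystal $\RC(\lambda)$ that both commute with every $e_a$ and agree on $(\nu_\emptyset,J_\emptyset)$ must coincide; one then only needs that $\widetilde I_{\lambda+\mu,\lambda}$ commutes with every $e_a$ and fixes $(\nu_\emptyset,J_\emptyset)$, both of which are immediate from the second paragraph.
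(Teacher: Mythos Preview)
Your proposal is correct and follows essentially the same approach as the paper: both show that passing from $L$ to $L+K$ only increases every vacancy number (hence validity is preserved), then use that $\widetilde I_{\lambda+\mu,\lambda}$ is the identity on the underlying rigged-configuration data to get commutation with $e_a$ and preservation of $\varepsilon_a$, and finally compute that $\wt$ and $\varphi_a$ are preserved directly. Your second paragraph simply makes explicit the multiplicity-array independence that the paper invokes implicitly when it says ``$\widetilde I_{\lambda+\mu,\lambda}$ is the identity on rigged configurations.''
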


\begin{proof}
Write $\lambda = \sum_{(a,i)\in\HH} iL_i^{(a)}\Lambda_a$ and $\mu = \sum_{(a,i)\in\HH} iK_i^{(a)}\Lambda_a$.  Then 
\begin{align*}
p_i^{(a)}(\nu) &= \sum_{j \geq 1} \min(i,j) L_j^{(a)} - \sum_{(b,j) \in \HH} (\alpha_a | \alpha_b) \min(i,j) m_j^{(b)}\\
&\le \sum_{j\ge 1} \min(i,j)\bigl(L_j^{(a)}+K_j^{(a)}\bigr) - \sum_{(b,j)\in\HH} (\alpha_a|\alpha_b) \min(i,j) m_j^{(b)}\\
&= p_i^{(a)}(\nu').
\end{align*}
Thus
\[
\max J_i^{(a)} = \max J^{\prime(a)}_i \le p_i^{(a)}(\nu) \le p_i^{(a)}(\nu'),
\]
for all $(a, i) \in \HH$ such that $J_i^{(a)} \neq \emptyset$ (and hence $J_i^{\prime(a)} \neq \emptyset$). This proves that $(\nu', J')$ is valid so that $\widetilde I_{\lambda+\mu,\lambda}$ is well-defined.  Moreover, we have
\begin{align*}
\wt\bigl(t_{-\lambda-\mu}&\otimes(\nu', J')\bigr)\\
&= -(\lambda+\mu) + \wt(\nu',J') \\
&= -\sum_{(a,i)\in\HH} i\bigl(L_i^{(a)}+K_i^{(a)}\bigr)\Lambda_a + \sum_{(a,i) \in \HH} i\big( (L_i^{(a)} + K_i^{(a)}) \Lambda_a - m_i^{(a)}\alpha_a) \\
&= -\sum_{(a,i) \in \HH} iL_i^{(a)}\Lambda_a + \sum_{(a,i) \in \HH} i\big( L_i^{(a)}\Lambda_a - m_i^{(a)}\alpha_a\big)\\
&= -\lambda + \wt(\nu,J) \\
&= \wt\bigl(t_{-\lambda}\otimes \wt(\nu,J)\bigr),
\end{align*}
which shows that $\widetilde I_{\lambda+\mu,\lambda}$ preserves the weight map.  Since $\widetilde I_{\lambda+\mu,\lambda}$ is the identity on rigged configurations, we obtain that $e_a$ commutes with $\widetilde I_{\lambda+\mu,\lambda}$ and $\widetilde I_{\lambda+\mu,\lambda}$ preserves $\varepsilon_a$, for all $a\in I$.  Also, $f_a$ commutes with $\widetilde I_{\lambda+\mu,\lambda}$ if $f_a(\nu,J) \neq 0$ because the map is the identity map on rigged configurations.  Then
\begin{align*}
\varphi_a(\nu', J') &= \varepsilon_a(\nu', J') + \langle h_a,\mu+\lambda\rangle \\
&= \varepsilon_a(\nu,J) + \langle h_a,\lambda \rangle + \langle h_a,\mu \rangle \\
&= \varphi_a(\nu,J) + \langle h_a,\mu\rangle,
\end{align*}
so we have
\begin{align*}
\varphi_a\bigl( t_{-\lambda} \otimes (\nu,J) \bigr) &= \max\{ -\infty, \varphi_a(\nu,J) + \langle h_a,-\lambda \rangle \} \\
&= \varphi_a(\nu,J) + \langle h_a,-\lambda \rangle \\
&= \varphi_a( \nu', J') + \langle h_a, -\lambda-\mu\rangle\\
&= \max\{ -\infty, \varphi_a(\nu',J') + \langle h_a, -\lambda-\mu \rangle \} \\
&= \varphi_a\bigl( t_{-\lambda-\mu} \otimes (\nu',J')\bigr) .
\end{align*}
Hence $\widetilde I_{\lambda+\mu,\lambda}$ is a crystal embedding.
\end{proof}

To complete the construction of a directed system of crystals of rigged configurations, we have the following lemma, which follows from a modification of the proof of Lemma \ref{lemma:embed}.

\begin{lemma}\label{lemma:diagram}
For dominant integral weights $\lambda$, $\mu$, and $\xi$, the diagram
\begin{displaymath}
\begin{tikzpicture}[xscale=5,yscale=1.5]
\node (1) at (0,1) {$T_{-\lambda} \otimes \RC(\lambda)$};
\node (2) at (1,1) {$T_{-\lambda-\mu}\otimes \RC(\lambda+\mu)$};
\node (3) at (1,0) {$T_{-\lambda-\mu-\xi}\otimes \RC(\lambda+\mu+\xi)$.};
\path[->,font=\scriptsize]
 (1) edge node[above]{$\widetilde I_{\lambda+\mu,\lambda}$} (2)
 (1) edge node[below]{$\widetilde I_{\lambda+\mu+\xi,\lambda}$} (3)
 (2) edge node[right]{$\widetilde I_{\lambda+\mu+\xi,\lambda+\mu}$} (3);
\end{tikzpicture}
\end{displaymath}
commutes.
\end{lemma}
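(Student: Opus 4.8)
The plan is to exploit the fact that each of the three maps $\widetilde I_{\bullet,\bullet}$ is, by its very construction, the identity on the underlying rigged configuration (it merely reinterprets vacancy numbers) and acts on the $T$-factor by the prescribed weight shift $t_{-\nu}\mapsto t_{-\nu'}$. Consequently the commutativity of the triangle is, element by element, nearly transparent, and only two points need checking: (i) that the composite $\widetilde I_{\lambda+\mu+\xi,\lambda+\mu}\circ\widetilde I_{\lambda+\mu,\lambda}$ is well-defined, i.e.\ that applying the two embeddings in succession yields a \emph{valid} rigged configuration in $\RC(\lambda+\mu+\xi)$; and (ii) that this composite agrees with the single embedding $\widetilde I_{\lambda+\mu+\xi,\lambda}$.

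For (i), write $\lambda=\sum_{(a,i)\in\HH} iL_i^{(a)}\Lambda_a$, $\mu=\sum_{(a,i)\in\HH} iK_i^{(a)}\Lambda_a$, and $\xi=\sum_{(a,i)\in\HH} iM_i^{(a)}\Lambda_a$. The vacancy-number estimate from the proof of Lemma~\ref{lemma:embed}, applied with $K_j^{(a)}$ replaced by $K_j^{(a)}+M_j^{(a)}$, gives
\[
p_i^{(a)}(\nu)\ \le\ p_i^{(a)}(\nu)+\sum_{j\ge1}\min(i,j)\bigl(K_j^{(a)}+M_j^{(a)}\bigr)
\]
for every $(a,i)\in\HH$, where the left side is the vacancy number of $\nu$ as an element of $C(L,\lambda)$ and the right side its vacancy number as an element of $C(L+K+M,\lambda+\mu+\xi)$; the intermediate inequality (only $K$ added) is precisely the one established in Lemma~\ref{lemma:embed}. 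Since every label of $(\nu,J)$ is $\le p_i^{(a)}(\nu)$ in $C(L,\lambda)$, it is also bounded by the vacancy numbers in $C(L+K,\lambda+\mu)$ and in $C(L+K+M,\lambda+\mu+\xi)$; hence $\widetilde I_{\lambda+\mu,\lambda}$ and then $\widetilde I_{\lambda+\mu+\xi,\lambda+\mu}$ may be applied, each time producing a valid rigged configuration. The accompanying weight identity is verified exactly as in Lemma~\ref{lemma:embed}.

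For (ii), observe that the vacancy numbers of $\nu$ regarded as an element of $C(L+K+M,\lambda+\mu+\xi)$ depend only on $\nu$ and on the total weight $\lambda+\mu+\xi$: the multiplicity array enters \eqref{eq:vacancy_numbers} only through $\sum_{j}\min(i,j)L_j^{(a)}$, which for the sum array equals $\sum_{j}\min(i,j)(L+K+M)_j^{(a)}$ and is insensitive to how $\lambda+\mu+\xi$ is decomposed. Therefore $\widetilde I_{\lambda+\mu+\xi,\lambda+\mu}\circ\widetilde I_{\lambda+\mu,\lambda}$ and $\widetilde I_{\lambda+\mu+\xi,\lambda}$ both send $t_{-\lambda}\otimes(\nu,J)$ to the \emph{same} pair $t_{-\lambda-\mu-\xi}\otimes(\nu,J)$ carrying the same reinterpreted vacancy numbers, so the triangle commutes.

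I do not anticipate a genuine obstacle: the entire content is the monotonicity of vacancy numbers in the dominant weight — already isolated in Lemma~\ref{lemma:embed} — together with the additivity of the $L$-contribution in \eqref{eq:vacancy_numbers}. The one point deserving care is to phrase the reinterpretation of vacancy numbers so that it is manifestly a function of $\nu$ and the ambient weight alone; this is exactly what makes the two routes through the diagram literally equal maps rather than merely compatible ones.
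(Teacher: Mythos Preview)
Your proposal is correct and follows exactly the approach the paper takes: the paper's own proof is the one-line observation that the result ``follows by repeated use of Lemma~\ref{lemma:embed} and the fact that $\widetilde I_{-,-}$ is the identity on rigged configurations,'' and you have simply unpacked both of those ingredients in detail. Your points (i) and (ii) are precisely the ``repeated use of Lemma~\ref{lemma:embed}'' and the ``identity on rigged configurations'' statements, respectively.
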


\begin{proof}
Follows by repeated use of Lemma \ref{lemma:embed} and the fact that $\widetilde I_{-,-}$ is the identity on rigged configurations.
\end{proof}

\begin{lemma}\label{lemma:direct_limit}
We have $\displaystyle \RC(\infty) = \varinjlim_{\lambda\in P^+} T_{-\lambda}\otimes \RC(\lambda)$ as abstract $U_q(\g)$-crystals.
\end{lemma}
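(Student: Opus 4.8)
The plan is to construct an explicit isomorphism of abstract $U_q(\g)$-crystals between $\RC(\infty)$ and the direct limit, using the fact that every connecting map $\widetilde I_{\lambda+\mu,\lambda}$ --- and in fact $\wt$, $\varepsilon_a$, and the $e_a$ as well --- depends only on the underlying rigged configuration and not on the ambient weight. By Lemmas~\ref{lemma:embed} and~\ref{lemma:diagram}, the family $\{T_{-\lambda}\otimes\RC(\lambda)\}_{\lambda\in P^+}$ together with the embeddings $\widetilde I_{\lambda+\mu,\lambda}$ is a directed system in $\mathscr C$, so the direct limit $\vec R:=\varinjlim_{\lambda\in P^+}T_{-\lambda}\otimes\RC(\lambda)$ exists, with canonical morphisms $\psi_\lambda\colon T_{-\lambda}\otimes\RC(\lambda)\to\vec R$ satisfying $\psi_\mu\circ\widetilde I_{\mu,\lambda}=\psi_\lambda$ whenever $\lambda\le\mu$. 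Since each $\widetilde I_{\mu,\lambda}$ is the identity on rigged configurations, $\psi_\lambda\bigl(t_{-\lambda}\otimes(\nu,J)\bigr)=\psi_\mu\bigl(t_{-\mu}\otimes(\nu',J')\bigr)$ holds exactly when $(\nu,J)=(\nu',J')$ as rigged configurations; hence the underlying set of $\vec R$ is canonically $\bigcup_{\lambda\in P^+}\RC(\lambda)$, the set of rigged configurations that are valid for some dominant weight. Let $\Psi\colon\vec R\to\RC(\infty)$ be the map sending a class to its underlying rigged configuration, regarded now as an element of $\RC(\infty)$ (i.e.\ with vacancy numbers computed at $\lambda=0$).

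Next I would prove that $\Psi$ is a bijection, which amounts to the identity $\bigcup_{\lambda\in P^+}\RC(\lambda)=\RC(\infty)$ of sets of rigged configurations. For the inclusion ``$\subseteq$'': by Theorem~\ref{S06-thm}, $\RC(\lambda)\cong B(\lambda)$ is generated from $(\nu_\emptyset,J_\emptyset)$ by the operators $f_a$, and validity holds at every stage of such a generation, so there $f_a=f_a'$; hence every element of $\RC(\lambda)$ also lies in $\RC(\infty)$. For ``$\supseteq$'': the set $\bigcup_\lambda\RC(\lambda)$ contains $(\nu_\emptyset,J_\emptyset)$, is closed under $e_a$ (each $\RC(\lambda)=X_{(\nu_\emptyset,J_\emptyset)}$ is, and $e_a$ is literally the same rigged-configuration operation in every $\RC(\lambda)$ and in $\RC(\infty)$), and is closed under $f_a'$: given $(\nu,J)\in\RC(\lambda)$, Lemma~\ref{lemma:embed} puts $(\nu,J)$ into $\RC(\mu)$ for every $\mu\ge\lambda$, and since the vacancy numbers $p_i^{(a)}(\mu)$ grow without bound in $\mu$, for $\mu$ sufficiently dominant the single string that $f_a'$ lengthens (or newly creates) satisfies its validity inequality (by~\eqref{f-vacancy} this is the only label whose validity could fail), so $f_a'(\nu,J)=f_a^{\RC(\mu)}(\nu,J)\in\RC(\mu)$. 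As $\RC(\infty)$ is generated from $(\nu_\emptyset,J_\emptyset)$ by the $e_a$ and $f_a'$, this yields $\RC(\infty)\subseteq\bigcup_\lambda\RC(\lambda)$.

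Finally I would check that $\Psi$ respects all the crystal data. A direct computation with the tensor rules shows that tensoring $\RC(\lambda)$ with the one-element crystal $T_{-\lambda}$ leaves the crystal graph and each $\varepsilon_a$ unchanged and shifts $\wt$ by $-\lambda$; hence the weight of $t_{-\lambda}\otimes(\nu,J)$ is $-\lambda+\sum_{(a,i)\in\HH}i\bigl(L_i^{(a)}\Lambda_a-m_i^{(a)}\alpha_a\bigr)=-\sum_{(a,i)\in\HH}im_i^{(a)}\alpha_a$, which is exactly $\wt(\nu,J)$ in $\RC(\infty)$ by~\eqref{Binf_wt}, while $\varepsilon_a$ of the class is the quantity $\max\{k:e_a^k(\nu,J)\neq0\}$ intrinsic to the rigged configuration, matching~\eqref{Binf_ep}; and then $\varphi_a(\nu,J)=\varepsilon_a(\nu,J)+\inner{h_a}{\wt(\nu,J)}$ agrees on both sides by the first crystal axiom. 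Moreover $e_a$ on $\vec R$ acts on underlying rigged configurations exactly as in Definition~\ref{def:rc_crystal_ops}, i.e.\ as on $\RC(\infty)$, and $f_a$ on $\vec R$ --- computed on a class by passing to a sufficiently dominant $\mu$ with $f_a^{\RC(\mu)}(\nu,J)\neq0$ --- equals $f_a'(\nu,J)$ there. Thus $\Psi$ is a bijective strict crystal morphism, hence an isomorphism of abstract $U_q(\g)$-crystals, giving the asserted equality $\RC(\infty)=\varinjlim_{\lambda\in P^+}T_{-\lambda}\otimes\RC(\lambda)$.

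The step I expect to be the main obstacle is the inclusion $\RC(\infty)\subseteq\bigcup_\lambda\RC(\lambda)$: one has to verify that iterating $f_a'$ never genuinely escapes $\bigcup_\lambda\RC(\lambda)$, which relies on the monotonicity and unboundedness of the vacancy numbers $p_i^{(a)}(\lambda)$ in $\lambda$ together with the precise bookkeeping in~\eqref{f-vacancy}--\eqref{e-vacancy} pinning down which single label's validity is at stake after one application of $f_a'$.
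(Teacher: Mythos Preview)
Your proposal is correct and follows essentially the same approach as the paper: define the map $\Psi$ (the paper calls it $\Theta$) as the identity on underlying rigged configurations, then check it intertwines the crystal operations and preserves $\wt$, $\varepsilon_a$, $\varphi_a$. The only difference is that the paper is much terser on bijectivity---it simply asserts that since $\Theta$ sends the highest weight vector of the direct limit to $(\nu_\emptyset,J_\emptyset)$ and commutes with the $f_a$, it is a bijection---whereas you unpack this into the explicit set equality $\bigcup_{\lambda}\RC(\lambda)=\RC(\infty)$ via closure under $f_a'$ and the growth of vacancy numbers; this is exactly the content implicit in the paper's one-line claim.
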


\begin{proof}
By Lemmas \ref{lemma:embed} and \ref{lemma:diagram}, $\{T_{-\lambda} \otimes \RC(\lambda) \} _{\lambda \in P^+}$ forms a directed system.  Let $X$ denote the direct limit $\varinjlim T_{-\lambda}\otimes \RC(\lambda)$.  Let $\Theta\colon X \longrightarrow \RC(\infty)$ be the identity map on rigged configurations; that is, for $x \in X$ such that $x = \widetilde{I}_{\lambda}(t_{-\lambda}\otimes (\nu,J))$, we have $\Theta(x) = (\nu,J)$.  To make the setting clear, we will denote the Kashiwara operators on $X$ by $\vec e_a$, $\vec f_a$, the Kashiwara operators on $\RC(\lambda)$ and $T_{-\lambda}\otimes \RC(\lambda)$ by $e_a^\lambda$, $f_a^\lambda$, and the Kashiwara operators on $\RC(\infty)$ by $e_a$, $f_a'$. 

To see that $\Theta$ commutes with Kashiwara lowering operators, for $x\in X$ and $\lambda$ such that $x = \widetilde I_{\lambda}\bigl( t_{-\lambda}\otimes (\nu,J) \bigr)$, we have
\begin{displaymath}
\vec f_ax = \widetilde I_{\lambda}\bigl( f_a^\lambda(t_{-\lambda}\otimes(\nu,J))\bigr)
= \widetilde I_{\lambda}\bigl( t_{-\lambda} \otimes f_a^\lambda(\nu,J)\bigr),
\end{displaymath}
where $t_{-\lambda}\otimes (\nu,J)$ satisfies the condition $f_a^\lambda(\nu,J) \neq 0$.  Note that any such $\lambda$ will suffice by the definition of the direct limit.  Thus
\begin{displaymath}
\Theta(\vec f_ax) = f_a^\lambda(\nu,J) = f_a'(\nu,J) = f_a'\Theta(x).
\end{displaymath}
The calculation involving the Kashiwara raising operators is similar.  By the definition of the weight function, it is clear that $\Theta$ preserves weights.  Moreover, $\Theta$ sends the highest weight vector of $X$ to the highest weight vector $(\nu_\emptyset,J_\emptyset)$ of $\RC(\infty)$, so $\Theta$ is a bijection.  
\end{proof}

\begin{thm}\label{thm:RCinf}
Let $\g$ be a Lie algebra of simply-laced finite type.  Then there exists a $U_q(\g)$-crystal isomorphism $B(\infty) \cong \RC(\infty)$ which sends $u_\infty \mapsto (\nu_\emptyset,J_\emptyset)$.
\end{thm}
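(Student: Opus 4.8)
The plan is to read off the isomorphism from the commutative square~\eqref{B-RC}. Its left edge $B(\infty) = \varinjlim_{\lambda\in P^+} T_{-\lambda}\otimes B(\lambda)$ is Kashiwara's theorem~\cite{K02}, its right edge $\RC(\infty) = \varinjlim_{\lambda\in P^+} T_{-\lambda}\otimes\RC(\lambda)$ is Lemma~\ref{lemma:direct_limit}, and the top edge is an isomorphism of direct limits induced by the per-$\lambda$ isomorphisms $B(\lambda)\cong\RC(\lambda)$ of Theorem~\ref{S06-thm}; composing the three produces the desired map $B(\infty)\to\RC(\infty)$, and tracking the highest weight vectors then gives $u_\infty\mapsto(\nu_\emptyset,J_\emptyset)$.

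Carrying this out, for each $\lambda\in P^+$ I fix the crystal isomorphism $\phi_\lambda\colon B(\lambda)\to\RC(\lambda)$ from Theorem~\ref{S06-thm}. Since $u_\lambda$ and $(\nu_\emptyset,J_\emptyset)$ are the unique elements of weight $\lambda$ in their respective crystals and $\phi_\lambda$ preserves weights, necessarily $\phi_\lambda(u_\lambda)=(\nu_\emptyset,J_\emptyset)$; tensoring with $\mathrm{id}_{T_{-\lambda}}$ gives $\Phi_\lambda\colon T_{-\lambda}\otimes B(\lambda)\to T_{-\lambda}\otimes\RC(\lambda)$ sending $t_{-\lambda}\otimes u_\lambda\mapsto t_{-\lambda}\otimes(\nu_\emptyset,J_\emptyset)$. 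The substantive point is that the $\Phi_\lambda$ form a morphism of directed systems, i.e.\ $\Phi_{\lambda+\mu}\circ I_{\lambda+\mu,\lambda} = \widetilde I_{\lambda+\mu,\lambda}\circ\Phi_\lambda$ for all $\lambda,\mu\in P^+$. Both composites are crystal embeddings $T_{-\lambda}\otimes B(\lambda)\to T_{-\lambda-\mu}\otimes\RC(\lambda+\mu)$ that commute strictly with every $e_a$ and send $t_{-\lambda}\otimes u_\lambda$ to $t_{-\lambda-\mu}\otimes(\nu_\emptyset,J_\emptyset)$, using that $I_{\lambda+\mu,\lambda}$ sends $t_{-\lambda}\otimes u_\lambda\mapsto t_{-\lambda-\mu}\otimes u_{\lambda+\mu}$ and that $\widetilde I_{\lambda+\mu,\lambda}$ is the identity on rigged configurations. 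Because $T_{-\lambda}\otimes B(\lambda)$ is generated from the $e_a$-highest element $t_{-\lambda}\otimes u_\lambda$ under the $f_a$, and an embedding intertwining the $e_a$ is determined by the image of that top element — descend along $e_a$, which is injective where it is nonzero — the two composites agree. Together with the coherence already recorded in~\eqref{eq:directed_system} and Lemma~\ref{lemma:diagram}, this is exactly what is needed for the $\Phi_\lambda$ to induce an isomorphism $\Phi$ of the two direct limits.

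Finally I assemble the square: $\Phi$, transported through the identifications $B(\infty)=\varinjlim T_{-\lambda}\otimes B(\lambda)$ and $\varinjlim T_{-\lambda}\otimes\RC(\lambda)=\RC(\infty)$, is a $U_q(\g)$-crystal isomorphism $B(\infty)\cong\RC(\infty)$. For the highest weight vector, $u_\infty$ is the image in the limit of $t_{-\lambda}\otimes u_\lambda$ for every $\lambda\in P^+$, which $\Phi$ carries to the image of $t_{-\lambda}\otimes(\nu_\emptyset,J_\emptyset)$, and this is $(\nu_\emptyset,J_\emptyset)\in\RC(\infty)$ under the identification of Lemma~\ref{lemma:direct_limit}. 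The only place requiring genuine care — and hence the main obstacle — is the rigidity argument that the $\Phi_\lambda$ commute with the structure maps of the two directed systems; everything else is formal bookkeeping with direct limits.
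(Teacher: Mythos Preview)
Your proof is correct and follows exactly the paper's approach: complete the square~\eqref{B-RC} by invoking Lemma~\ref{lemma:direct_limit} for the right edge and then compose around. The paper's own proof is a two-line appeal to that square, whereas you additionally spell out the point the paper leaves implicit---that the per-$\lambda$ isomorphisms $\Phi_\lambda$ are compatible with the transition maps $I_{\lambda+\mu,\lambda}$ and $\widetilde I_{\lambda+\mu,\lambda}$, so that an isomorphism of direct limits is actually induced---and your rigidity argument (both composites agree on the unique highest weight element and commute with every $e_a$, hence agree everywhere) is sound.
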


\begin{proof}
By Lemma~\ref{lemma:direct_limit}, the dashed arrow on the right-hand of the square in~\eqref{B-RC} becomes an isomorphism of $U_q(\g)$-crystals, so we may construct an isomorphism by composing the maps along the outside of the square.
\end{proof}

\begin{remark}
From this point forward, we denote $f_a^{\prime}$ simply by $f_a$.  This should not cause any confusion.
\end{remark}

\section{Extending Theorem~\ref{thm:RCinf} to arbitrary simply-laced Kac-Moody algebras}
\label{sec:arbitrary_sl}

We show the convexity condition holds for general symmetrizable types.

\begin{lemma}
\label{lemma:convexity}
Consider a rigged configuration $(\nu, J)$. Fix $(a, i) \in \HH$ and suppose that $m_i^{(a)} = 0$. Let $C_{a,b}, C_{a,b}^{\prime}, C_{a,b}^{\vee} \in \ZZ_{>0}$ for all $a,b \in I$, and consider the generalization of the vacancy numbers for $(\nu,J)$ to 
\[
p_i^{(a)} = \sum_{j \geq 1} \min(i,j) L_j^{(a)} - \sum_{(b,j) \in \HH} C_{a,b}^{\vee} A_{ab} \min(C_{a,b} i, C_{a,b}^{\prime} j) m_j^{(b)}
\]
We have
\[
2p_i^{(a)} \geq p_{i-1}^{(a)} + p_{i+1}^{(a)}.
\]
\end{lemma}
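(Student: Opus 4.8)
The plan is to prove the concavity inequality $2p_i^{(a)} \geq p_{i-1}^{(a)} + p_{i+1}^{(a)}$ by showing it holds term-by-term in the defining sum, using the fact that $m_i^{(a)} = 0$. Write $p_i^{(a)}$ as a difference of two sums: the ``source'' contribution $S_i := \sum_{j\geq 1}\min(i,j)L_j^{(a)}$ and the ``interaction'' contribution $T_i := \sum_{(b,j)\in\HH} C_{a,b}^\vee A_{ab}\min(C_{a,b}\,i, C_{a,b}'\,j)\,m_j^{(b)}$. For the source term, each summand $\min(i,j)L_j^{(a)}$ is a concave function of $i$ (since $i\mapsto\min(i,j)$ is concave, being the minimum of two linear functions), so $2\min(i,j)\geq \min(i-1,j)+\min(i+1,j)$ for every $j$, and summing over $j$ (with nonnegative coefficients $L_j^{(a)}$) gives $2S_i \geq S_{i-1}+S_{i+1}$.

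For the interaction term the inequality goes the other way: I want $2T_i \leq T_{i-1} + T_{i+1}$ so that $-T_i$ contributes a concave piece as well. This is where the hypothesis $m_i^{(a)} = 0$ enters, but only in the diagonal term $b = a$ (for $b\neq a$ there is nothing to worry about — I claim $i\mapsto \min(C_{a,b}i, C_{a,b}'j)$ is concave regardless). Indeed, for fixed $j$ and positive constants $c = C_{a,b}$, $c' = C_{a,b}'$, the function $i\mapsto\min(ci, c'j)$ is the minimum of two linear functions of $i$, hence concave as a function of the real variable $i$; but I need the discrete concavity $2\min(ci,c'j)\geq \min(c(i-1),c'j)+\min(c(i+1),c'j)$, which holds for concave functions evaluated at integer-spaced points. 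So the off-diagonal summands ($b\neq a$) satisfy $2\cdot(\text{summand at }i)\geq (\text{summand at }i-1)+(\text{summand at }i+1)$; since $A_{ab}\leq 0$ for $b\neq a$ and $C_{a,b}^\vee, m_j^{(b)}\geq 0$, multiplying by the nonpositive coefficient $C_{a,b}^\vee A_{ab}$ reverses the inequality, giving $2T_i^{(b\neq a)} \leq T_{i-1}^{(b\neq a)} + T_{i+1}^{(b\neq a)}$, exactly what is needed.

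The main obstacle is the diagonal term $b = a$, where $A_{aa} = 2 > 0$ and the sign argument fails. Here I would examine the summand $2\,C_{a,a}^\vee\min(C_{a,a}i, C_{a,a}'j)\,m_j^{(a)}$ against its neighbors for each fixed $j$ with $m_j^{(a)} > 0$. By hypothesis $m_i^{(a)} = 0$, so the value $j = i$ never occurs in this sum. When $j < i$, we have $C_{a,a}'j < C_{a,a}i$ eventually — more carefully, I should set $C := C_{a,a}$, $C' := C_{a,a}'$ and check: for $j \leq i-1$, $\min(Ci, C'j)$, $\min(C(i-1),C'j)$, $\min(C(i+1),C'j)$ are all equal to $C'j$ when $C'j \leq C(i-1)$, so the neighbor inequality is an equality; the only borderline is when $C'j$ falls strictly between $C(i-1)$ and $C(i+1)$, and one checks directly (a short finite case analysis on where $C'j$ sits relative to $Ci$) that the convexity $2\min(Ci,C'j)\leq \min(C(i-1),C'j)+\min(C(i+1),C'j)$ holds precisely because the ``kink'' is not at $i$ itself — the function $i\mapsto\min(Ci,C'j)$ is linear on a neighborhood of $i$ unless $C'j = Ci$, which is excluded since $j\neq i$ would still allow $C'j=Ci$ for non-coprime $C,C'$, so the genuinely careful step is verifying that even when $C'j = Ci$ with $j\neq i$ the convex inequality still holds (it does, with equality, since $\min$ of two lines through a common point at $i$ is convex there only in the wrong direction — wait, it is concave). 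Since $m_i^{(a)}=0$ removes exactly the one value of $j$ where concavity of $\min(Ci,C'j)m_j^{(a)}$ would obstruct us, and for all other $j$ the summand $j\mapsto C\cdot\min(Ci,C'j)$ restricted to the relevant range is linear in $i$ near $i$, we get $2T_i^{(b=a)}\leq T_{i-1}^{(b=a)}+T_{i+1}^{(b=a)}$. Combining the source estimate with both interaction estimates yields $2p_i^{(a)} = 2S_i - 2T_i \geq (S_{i-1}+S_{i+1}) - (T_{i-1}+T_{i+1}) = p_{i-1}^{(a)}+p_{i+1}^{(a)}$, completing the proof. $\qedbox$
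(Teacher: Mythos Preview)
Your overall approach matches the paper's exactly: decompose $p_i^{(a)}$ into the $L$-term and the interaction term, use concavity of $i\mapsto\min(i,j)$ together with the sign $A_{ab}\le 0$ for $b\ne a$, and handle $b=a$ separately via $m_i^{(a)}=0$. The paper packages the interaction as $Q_i^{(b)}=\sum_k\min(C_{a,b}i,C'_{a,b}k)\,m_k^{(b)}$ and argues with column counts of the scaled partition $C'_{a,b}\nu^{(b)}$, but the content is the same as your term-by-term computation.

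Your visible hesitation at the diagonal step (``wait, it is concave'') is not carelessness---it is the actual obstruction. For $b=a$ the coefficient $A_{aa}=2$ is positive, so you need each surviving summand $\min(C_{a,a}i,C'_{a,a}j)$ to be \emph{affine} in $i$ on $\{i-1,i,i+1\}$, not merely concave; this holds precisely when the kink location $C'_{a,a}j/C_{a,a}$ lies outside $(i-1,i+1)$. Your claim that $m_i^{(a)}=0$ ``removes exactly the one value of $j$'' that obstructs this is valid only when $C_{a,a}=C'_{a,a}$, for then the sole integer $j$ with kink in $(i-1,i+1)$ is $j=i$. For arbitrary $C_{a,a}\ne C'_{a,a}$ the step fails (take $C_{a,a}=1$, $C'_{a,a}=2$, $\nu^{(a)}=(3,1)$, $i=2$: then $m_2^{(a)}=0$ but the $j=1$ term has its kink at $i=2$). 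The paper's proof makes the same tacit assumption---its assertion that $m_i^{(a)}=0$ forces $\Theta_i^{(a)}=\Xi_i^{(a)}$ likewise requires $C_{a,a}=C'_{a,a}$---and in every application in the paper one has $C_{a,a}=C'_{a,a}=\gamma_a$, so nothing downstream is affected. To make your write-up clean, state that hypothesis explicitly; then your diagonal paragraph collapses to a one-line observation that for $j\ne i$ the function $i'\mapsto\min(Ci',Cj)$ is linear on $\{i-1,i,i+1\}$.
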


\begin{proof}
Consider any $(b,j) \in \HH$ and define
\[
Q_j^{(b)} = \sum_{k=1}^{\infty} \min(C_{a,b} j, C^{\prime}_{a,b} k) m_k^{(b)}.
\]
This is the number of boxes in the first $C_{a,b} j$ columns in the shape $C^{\prime}_{a,b} \nu^{(b)}$.  Set $\Theta_j^{(b)}=Q_j^{(b)} - Q_{j-1}^{(b)}$ and $\Xi_j^{(b)} = Q_{j+1}^{(b)} - Q_j^{(b)}$.  We must have $\Theta_j^{(b)} \geq \Xi_j^{(b)} \geq 0$ since $C^{\prime}_{a,b} \nu^{(b)}$ is a partition. Thus
\begin{align*}
2Q_j^{(b)} &= 2Q_{j-1}^{(b)} + 2 \Theta_j^{(b)} \\
&\geq 2Q_{j-1}^{(b)} + \Theta_j^{(b)} + \Xi_j^{(b)} \\
&= Q_{j-1}^{(b)} + Q_j^{(b)} + \Xi_j^{(b)} \\
&= Q_{j-1}^{(b)} + Q_{j+1}^{(b)}.
\end{align*}
Since $m_i^{(a)} = 0$, we have $\Xi_i^{(a)} = \Theta_i^{(a)}$, and so
\[
2Q_i^{(a)} = Q_{i-1}^{(a)} + Q_{i+1}^{(a)}.
\]
Recall $A_{ab} \leq 0$ for all $a \neq b$ and $C_{a,b}^{\vee} > 0$. Therefore we have $-C_{a,b}^{\vee} A_{ab} \geq 0$ for all $a \neq b$, and hence
\[
-2 \sum_{(b,j) \in \HH} C_{a,b}^{\vee} A_{ab} Q_j^{(b)} \geq - \sum_{(b,j) \in \HH} C_{a,b}^{\vee} A_{ab} \bigl( Q_{j-1}^{(b)} + Q_{j+1}^{(b)} \bigr)
\]
Similarly we can show that
\[
\sum_{j > 0} \min(i,j) L_j^{(a)} \geq \sum_{j > 0} \min(i-1,j) L_j^{(a)} + \min(i+1,j) L_j^{(a)},
\]
and hence
\[
2p_i^{(a)} \geq p_{i-1}^{(a)} + p_{i+1}^{(a)}.\qedhere
\]
\end{proof}

We also show the following proposition for generalized types.

\begin{prop}
\label{prop:ep_phi}
Consider a rigged configuration $(\nu, J) \in \RC(\infty)$. Fix some $a \in I$ and consider the generalization of the vacancy numbers given in Lemma~{\upshape\ref{lemma:convexity}} such that $p_{\infty}^{(a)} = \inner{h_a}{\wt(\nu, J)}$. Let $x$ be the smallest label of $(\nu, J)^{(a)}$ and $s = \min(0, x)$. Then we have
\[
\varepsilon_a(\nu, J)  = -s, \ \ \ \ \ 
\varphi_a(\nu, J)  = p_{\infty}^{(a)} - s.
\]
\end{prop}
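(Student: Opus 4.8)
The plan is to compute $\varepsilon_a(\nu,J)$ directly by tracking what happens to the $a$-th rigged partition under repeated application of $e_a$, and then to obtain $\varphi_a$ from the relation $\varphi_a = \varepsilon_a + \inner{h_a}{\wt(\nu,J)}$ (equation~\eqref{Binf_phi}) together with the hypothesis $p_\infty^{(a)} = \inner{h_a}{\wt(\nu,J)}$. First I would observe that $e_a$ only modifies strings in $(\nu,J)^{(a)}$ and, crucially, changes the vacancy numbers $p_i^{(a)}$ only for $i$ at least the length of the string being shortened (equation~\eqref{e-vacancy}), while preserving all colabels of untouched strings. So the key is to understand how the smallest label evolves. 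If the smallest label $x$ satisfies $x \ge 0$ then $e_a(\nu,J)=0$ and $s=\min(0,x)=0$, giving $\varepsilon_a(\nu,J)=0=-s$ immediately; so assume $x<0$, whence $s=x$.

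Next I would argue that applying $e_a$ once, shortening the minimal-length string $(\ell,x)$ with label $x$ to $(\ell-1,x+1)$: by~\eqref{e-vacancy} every string of length $<\ell$ keeps its rigging and is still $\ge x$ (indeed, by minimality of $\ell$ among label-$x$ strings, these all have rigging $> x$, i.e. $\ge x+1$); every string of length $\ge \ell$ has its vacancy number increased by $A_{aa}=2$, hence its rigging increased by $2$, landing at value $\ge x+2 > x+1$. Thus after one step the new minimal label is exactly $x+1$ (realized by the string $(\ell-1,x+1)$, unless $\ell=1$ in which case that string disappears and one checks the minimum is still $\ge x+1$, with equality iff there was another string of rigging $x+1$). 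Iterating, the minimal label strictly increases by $1$ at each step as long as it is negative, so $e_a^k(\nu,J)\ne 0$ precisely for $k \le -x$, giving $\varepsilon_a(\nu,J) = -x = -s$. This is essentially the same bookkeeping already carried out in the proof of Lemma~\ref{RCcrystal}, parts~\eqref{RCinf-einv}, so I would lean on that argument. The one technical point requiring care — and the step I expect to be the main obstacle — is the boundary case $\ell=1$, where the shortened string vanishes entirely: one must verify using convexity of the vacancy numbers (Lemma~\ref{lemma:convexity}, applicable since $m_0^{(a)}=0$ makes $p_0^{(a)}=0$ and the profile is concave) that no label can have dropped below $x+1$, so that $e_a$ does not terminate prematurely and the count $\varepsilon_a = -x$ is exact rather than merely an upper bound.

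Finally, for $\varphi_a$ I would simply substitute: $\varphi_a(\nu,J) = \varepsilon_a(\nu,J) + \inner{h_a}{\wt(\nu,J)} = -s + p_\infty^{(a)}$, using~\eqref{Binf_phi} and the hypothesis $p_\infty^{(a)} = \inner{h_a}{\wt(\nu,J)}$. This gives $\varphi_a(\nu,J) = p_\infty^{(a)} - s$ as claimed, with no further work needed.
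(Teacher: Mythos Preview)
Your approach reverses the paper's: the paper first establishes $\varphi_a(\nu,J) = p_\infty^{(a)} - s$ by invoking the argument of~\cite[Lemma~3.6]{S06} (which proves this in the highest-weight setting $\RC(\lambda)$, where $\varphi_a$ counts applications of $f_a$ before invalidity, and which genuinely uses Lemma~\ref{lemma:convexity}), and only then reads off $\varepsilon_a = -s$ from $p_\infty^{(a)} = \varphi_a - \varepsilon_a$. You instead attempt to compute $\varepsilon_a$ directly and deduce $\varphi_a$ from~\eqref{Binf_phi}. That reversal is perfectly reasonable in principle, and your bookkeeping for the generic step $\ell > 1$ is correct.

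However, your treatment of the boundary case $\ell = 1$ contains a genuine gap, and your diagnosis of what goes wrong there is inverted. When $\ell = 1$ the string $(1,x)$ vanishes under $e_a$, and \emph{every} remaining string in $(\nu,J)^{(a)}$ has length $\ge 1 = \ell$, so by~\eqref{e-vacancy} every surviving rigging increases by $A_{aa} = 2$. Thus the new minimum label is at least $x+2$, never $x+1$; your parenthetical ``with equality iff there was another string of rigging $x+1$'' cannot occur. The danger is therefore not that some label drops below $x+1$ (your stated check), but that the minimum jumps \emph{above} $x+1$, so that $e_a$ reaches a nonnegative minimum in fewer than $-x$ steps and $\varepsilon_a < -x$. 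Convexity in the form of Lemma~\ref{lemma:convexity} does not by itself rule this out. What actually saves the argument is that for $(\nu,J) \in \RC(\infty)$ one has $\ell = 1 \Rightarrow x = -1$: length-$1$ strings in $\nu^{(a)}$ can only be created by $f_a$ as $(1,-1)$, and one checks inductively along any $f$-path from $(\nu_\emptyset,J_\emptyset)$ that their riggings stay $\ge -1$. With $x = -1$ the jump to $\ge x+2 = 1$ is harmless, since then $s$ moves from $-1$ to $0$ in one step as required. Alternatively---and this is closer to what the paper and~\cite[Thm.~3.8]{Sakamoto13} do---one can avoid tracking $e_a$ altogether: the proof of Lemma~\ref{RCcrystal}\eqref{RCinf-finv} shows $f_a$ decreases $s$ by exactly $1$, and combined with $f_a e_a = \mathrm{id}$ this forces $e_a$ to increase $s$ by exactly $1$, giving $\varepsilon_a = -s$ without ever confronting the $\ell=1$ case for $e_a$ directly.
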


\begin{proof}
The proof that $\varphi_a(\nu, J) = p_{\infty}^{(a)} - s$ follows that given in~\cite[Lemma 3.6]{S06} and relies on the convexity statement of Lemma~\ref{lemma:convexity}. The statement for $\varepsilon_a(\nu,J)$ follows from $p_{\infty}^{(a)} = \inner{h_a}{\wt(\nu,J)} = \varphi_a(\nu,J) - \varepsilon_a(\nu,J)$ (or \cite[Thm.~3.8]{Sakamoto13}).
\end{proof}

Note that the proof of Theorem~\ref{S06-thm} given in~\cite{S06} is based on the Stembridge axioms~\cite{Stembridge03} and does not use the condition that the crystal of finite type. However it does rely upon Proposition~\ref{prop:ep_phi} for simply-laced types (this is contained in~\cite{S06}).  Hence the proof holds for arbitrary simply-laced types, and it gives a rigged configuration model for highest weight modules in arbitrary simply-laced types. Similarly, the proof of Theorem~\ref{thm:RCinf} does not use any assumption that the Kac-Moody algebra be of finite type, so our result extends to arbitrary simply-laced types.

\begin{thm}\label{RCinf_simply-laced}
Let $\g$ be of simply-laced type.  Then there exists a $U_q(\g)$-crystal isomorphism $B(\infty) \cong \RC(\infty)$ which sends $u_\infty \mapsto (\nu_\emptyset,J_\emptyset)$.
\end{thm}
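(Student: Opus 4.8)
The plan is to observe that the argument establishing Theorem~\ref{thm:RCinf} never genuinely used finiteness of $\g$, once one has the three structural facts it relied on, and that all three are now available in simply-laced Kac-Moody generality. Concretely, the finite-type proof rested on: (i) Theorem~\ref{S06-thm}, that $\RC(\lambda)$ realizes $B(\lambda)$; (ii) the identification of $B(\infty)$ with $\varinjlim_{\lambda\in P^+} T_{-\lambda}\otimes B(\lambda)$ from~\cite{K02}; and (iii) the directed-system machinery of Lemmas~\ref{lemma:embed}, \ref{lemma:diagram}, and~\ref{lemma:direct_limit} for rigged configurations. So the proof will simply re-run the proof of Theorem~\ref{thm:RCinf} after checking that each of these inputs is now in force.

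For (i): I would note that the proof of Theorem~\ref{S06-thm} in~\cite{S06} proceeds by verifying the Stembridge local axioms~\cite{Stembridge03}, which involve only a node $a$ of the Dynkin diagram together with its neighbors and make no reference to $\g$ being of finite type. The only genuinely ``global'' ingredient is the computation of $\varepsilon_a$ and $\varphi_a$ on a rigged configuration, which is supplied by Proposition~\ref{prop:ep_phi}. Since Proposition~\ref{prop:ep_phi} is derived from the convexity inequality $2p_i^{(a)}\ge p_{i-1}^{(a)}+p_{i+1}^{(a)}$ of Lemma~\ref{lemma:convexity}, established there for all symmetrizable types (hence in particular for the simply-laced case with all $\gamma_a=1$), the proof of Theorem~\ref{S06-thm} goes through verbatim and yields $\RC(\lambda)\cong B(\lambda)$ for every simply-laced Kac-Moody algebra $\g$ and every dominant integral weight $\lambda$.

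For (ii) and (iii): the direct-limit description $B(\infty)=\varinjlim_{\lambda\in P^+}T_{-\lambda}\otimes B(\lambda)$ holds for any symmetrizable Kac-Moody algebra by~\cite{K02}, and the construction of the embeddings $\widetilde I_{\lambda+\mu,\lambda}$, the commutativity of the resulting triangles, and the identification $\RC(\infty)=\varinjlim_{\lambda\in P^+}T_{-\lambda}\otimes\RC(\lambda)$ in Lemmas~\ref{lemma:embed}, \ref{lemma:diagram}, and~\ref{lemma:direct_limit} used only the explicit formula for the vacancy numbers and the fact that $\widetilde I$ is the identity on underlying rigged configurations, so again no finiteness is needed. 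Composing the isomorphisms along the outside of the square~\eqref{B-RC} then produces the desired $U_q(\g)$-crystal isomorphism $B(\infty)\cong\RC(\infty)$ sending $u_\infty$ to $(\nu_\emptyset,J_\emptyset)$, exactly as in the proof of Theorem~\ref{thm:RCinf}.

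I expect the main obstacle to be a bookkeeping one rather than a computational one: making sure that Stembridge's axioms, as applied in~\cite{S06}, are genuinely local (so that the finite-type hypothesis stated there is inessential), and that the convexity statement invoked in the proof of Proposition~\ref{prop:ep_phi} is precisely the one supplied by Lemma~\ref{lemma:convexity}. Once those two points are confirmed, the theorem follows with no new calculation.
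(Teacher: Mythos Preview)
Your proposal is correct and matches the paper's own argument essentially verbatim: the paper observes that the Stembridge-axiom proof of Theorem~\ref{S06-thm} in~\cite{S06} does not use finiteness (relying only on Proposition~\ref{prop:ep_phi}, which in turn rests on the convexity of Lemma~\ref{lemma:convexity}), and that the direct-limit machinery of Section~\ref{sec:simplyfinite} likewise never invokes the finite-type hypothesis, so Theorem~\ref{thm:RCinf} extends to arbitrary simply-laced Kac-Moody algebras. You have identified exactly the same three ingredients and drawn the same conclusion.
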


\begin{ex}
Consider the hyperbolic Kac-Moody algebra $H_1^{(4)}$ (see \cite{Cetal:10} for the notation and list of Dynkin diagrams), whose Dynkin diagram is the complete graph on four vertices.
\[
\begin{tikzpicture}[scale=.75,baseline=-2]
\node[circle,draw,scale=.6,label={west:$1$}] (1) at (0,0) {};
\node[circle,draw,scale=.6,label={north:$2$}] (2) at (1,1) {};
\node[circle,draw,scale=.6,label={south:$4$}] (4) at (1,-1){};
\node[circle,draw,scale=.6,label={east:$3$}] (3) at (2,0) {};
\path[-]
 (1) edge (2)
 (1) edge (3)
 (1) edge (4)
 (2) edge (3)
 (2) edge (4)
 (3) edge (4);
\end{tikzpicture}
\]
Then the partitions are enumerated as $(\nu^{(1)},\nu^{(2)},\nu^{(3)},\nu^{(4)})$ and 
\[
f_4f_2^2f_1f_3f_4^3f_2f_1(\nu_\emptyset,J_\emptyset) = 
\begin{tikzpicture}[scale=.35,baseline=-20]
 \rpp{1,1}{2,1}{1,1}
 \begin{scope}[xshift=5cm]
 \rpp{2,1}{-1,1}{0,1}
 \end{scope}
 \begin{scope}[xshift=11cm]
 \rpp{1}{2}{4}
 \end{scope} 
 \begin{scope}[xshift=16cm]
 \rpp{3,1}{-1,-1}{-2,1}
 \end{scope}
\end{tikzpicture}.
\]
\end{ex}

\section{Extending Theorem~\ref{thm:RCinf} to non-simply-laced Lie algebras}
\label{sec:non-simply-laced}

\subsection{Virtual crystals}
\label{sec:virtual_crystals}
In this section, $\g$ denotes an affine Kac-Moody algebra with classical subalgebra $\g_0$.
\begin{table}[t]
\onehalfspacing
\[
\begin{array}{|c|c|c|c|c|}\hline
\text{type of } \g & C_n^{(1)}, A_{2n}^{(2)}, A_{2n}^{(2)\dagger}, D_{n+1}^{(2)} & B_n^{(1)}, A_{2n-1}^{(2)} & E_6^{(2)}, F_4^{(1)} & G_2^{(1)}, D_4^{(3)} \\\hline
\text{type of } \virtual{\g} & A_{2n-1}^{(1)} & D_{n+1}^{(1)} & E_6^{(1)} & D_4^{(1)} \\\hline
\end{array}
\]
\caption{Well-known embeddings $\g \lhook\joinrel\longrightarrow \virtual{\g}$ of affine Kac-Moody algebras by type as given in \cite{JM85} $(n\neq 1)$.}\label{table:affine_embeddings}
\end{table}
Fix one of the embeddings $\g\lhook\joinrel\longrightarrow \virtual{\g}$ from Table \ref{table:affine_embeddings}, so that $\virtual{\g}$ is simply-laced with index set denoted by $\virtual{I}$. Let $\Gamma$ be the Dynkin diagram of $\g$ and $\virtual{\Gamma}$ be the Dynkin diagram of $\widehat{\g}$.\footnote{From now on, if $S$ is an object associated with $\g$, then $\virtual{S}$ will denote the corresponding object associated with $\virtual{\g}$ under the appropriate embedding listed above.} These embeddings arise from the diagram foldings $\phi \colon \virtual{\Gamma} \folding \Gamma$. We also have to define additional data $\gamma = (\gamma_a)_{a \in I}$ in the following way. 
\begin{enumerate}
\item Suppose $\Gamma$ has a unique arrow.  Removing the edge with this unique arrow leaves two connected components.
\begin{enumerate}
  \item Suppose the arrow points towards the component of the special node $0$. Then $\gamma_a = 1$ for all $a \in I$.
  \item Suppose instead the arrow points away from the component of the special node $0$.  Then $\gamma_a$ is the order of $\phi$ for all $a$ in the component of $0$ after removing the arrow.  For $a$ in the component not containing $0$, set $\gamma_a = 1$.
\end{enumerate}
\item If $\Gamma$ has two arrows, then $\Gamma$ embeds into the Dynkin diagram of $A_{2n-1}^{(1)}$.  Then $\gamma_a = 1$ for all $1 \leq a \leq n-1$, and for $a \in \{0, n\}$, we have $\gamma_a = 2$ if the arrow points away from $a$ and $\gamma_a = 1$ otherwise.
\end{enumerate}

We have two special cases of the above for types $A_1^{(1)}$ and $A_2^{(2)}$. For type $A_1^{(1)}$, we consider the diagram folding of $A_3^{(1)}$ given by $\phi^{-1}(0) = \{0, 2\}$ and $\phi^{-1}(1) = \{1, 3\}$ and $\gamma_0 = \gamma_1 = 1$. For type $A_2^{(2)}$, we consider the diagram folding of $D_4^{(1)}$ given by $\phi^{-1}(0) = \{0,1,3,4\}$ and $\phi^{-1}(1) = \{2\}$ and $\gamma_0 = 1$ and $\gamma_1 = 4$.

The embeddings in Table \ref{table:affine_embeddings} yield natural embeddings $\Psi \colon P \longrightarrow \widehat{P}$ of weight lattices as
\[
\Lambda_a \mapsto \gamma_a \sum_{b \in \phi^{-1}(a)} \widehat{\Lambda}_b
\quad \quad
\text{ and }
\quad \quad
\alpha_a \mapsto \gamma_a \sum_{b \in \phi^{-1}(a)} \virtual{\alpha}_b.
\]
This implies that $\Psi(\delta) = c_0 \gamma_0 \virtual{\delta}$, where $\delta$ (resp.\ $\virtual{\delta}$) is the minimal positive imaginary root in $P$ (resp.\ $\virtual{P}$).

\begin{remark}
\label{rem:A2dual}
There is another folding of $D_4^{(1)}$ to obtain $A_2^{(2)}$ by setting $\phi^{-1}(0) = \{2\}$ and $\phi^{-1}(1) = \{0,1,3,4\}$, but with $\gamma_0 = \gamma_1 = 1$. Since $0 \notin \phi^{-1}(0)$, we have $\Psi(\delta) \neq c_0 \gamma_0 \virtual{\delta}$. This implies $\Psi(\delta) = c_{\phi(0)} \gamma_{\phi(0)} \virtual{\delta}$; i.e., we want the coefficients of $\virtual{\delta}$ to correspond to the image of $0$ under the diagram folding.  Alternatively we could consider this as a folding of $A_2^{(2)\dagger}$, which is the same as the Dynkin diagram of $A_2^{(2)}$ but with the labels of nodes interchanged (with $1$ as the affine node).
\end{remark}

Next we restrict our focus to untwisted types; that is, we only consider
\begin{equation}\label{eq:untwisted_embeddings}
\begin{aligned}
C_n^{(1)} & \lhook\joinrel\longrightarrow A_{2n-1}^{(1)},
& B_n^{(1)} & \lhook\joinrel\longrightarrow D_{n+1}^{(1)},
\\ F_4^{(1)} & \lhook\joinrel\longrightarrow E_6^{(1)},
& G_2^{(1)} & \lhook\joinrel\longrightarrow D_4^{(1)}.
\end{aligned}
\end{equation}
When restricting to the classical subalgebras from~\eqref{eq:untwisted_embeddings}, we get the embeddings
\begin{equation}\label{eq:classical_embeddings}
\begin{aligned}
C_n & \lhook\joinrel\longrightarrow A_{2n-1},
& B_n & \lhook\joinrel\longrightarrow D_{n+1},
\\ F_4 & \lhook\joinrel\longrightarrow E_6,
& G_2 & \lhook\joinrel\longrightarrow D_4,
\end{aligned}
\end{equation}
via diagram foldings.

If $\g_0 \lhook\joinrel\longrightarrow \virtual{\g}_0$ is one of the embeddings from~\eqref{eq:classical_embeddings}, then it induces an injection $v \colon B(\lambda) \lhook\joinrel\longrightarrow B(\virtual{\lambda})$ as sets, where $\Psi(\lambda) = \virtual{\lambda}$. However, there is additional structure on the image under $v$ as a \emph{virtual crystal}, where $e_a$ and $f_a$ are defined on the image as
\begin{equation}
\label{eq:virtual_crystal_ops}
e^v_a = \prod_{b \in \phi^{-1}(a)} \virtual{e}_b^{\,\gamma_a}
\quad \quad
\text{ and }
\quad \quad
f^v_a = \prod_{b \in \phi^{-1}(a)} \virtual{f}_b^{\,\gamma_a},
\end{equation}
respectively, and they commute with $v$~\cite{baker2000,OSS03III,OSS03II}. These are known as the \emph{virtual Kashiwara (crystal) operators}. It is shown in~\cite{K96} that for any $a \in I$ and $b,b^{\prime} \in \phi^{-1}(a)$ we have $e_b e_{b^{\prime}} = e_{b^{\prime}} e_b$ and $f_b f_{b^{\prime}} = f_{b^{\prime}} f_b$ as operators (recall that $b$ and $b^{\prime}$ are not connected), so both $e^v_a$ and $f^v_b$ are well-defined. The inclusion map $v$ also satisfies the following commutative diagram.
\begin{equation}
\label{eq:virtual_weight}
\begin{tikzpicture}[xscale=4, yscale=1.5, text height=1.9ex, text depth=0.25ex, baseline=.75cm]
\node (1) at (0,1) {$B(\lambda)$};
\node (2) at (1,1) {$B(\virtual{\lambda})$};
\node (3) at (0,0) {$P$};
\node (4) at (1,0) {$\virtual{P}$};
\path[right hook->,font=\scriptsize]
 (1) edge node[above,inner sep=2]{$v$} (2)
 (3) edge node[below,inner sep=1]{$\Psi$} (4);
\path[->,font=\scriptsize]
 (1) edge node[left] {$\wt$} (3)
 (2) edge node[right]{$\virtual{\wt}$} (4);
\end{tikzpicture}
\end{equation}
In~\cite{baker2000}, it was shown that this defines a $U_q(\g)$-crystal structure on the image of $v$.  More generally, we define a virtual crystal as follows.

\begin{dfn}\label{def:virtual}
Consider any symmetrizable types $\g$ and $\virtual{\g}$ with index sets $I$ and $\virtual{I}$, respectively. Let $\phi \colon \virtual{I} \longrightarrow I$ be a surjection such that $b$ is not connected to $b^{\prime}$ for all $b,b^{\prime} \in \phi^{-1}(a)$ and $a \in I$. Let $\virtual{B}$ be a $U_q(\virtual{\g})$-crystal and $V \subseteq \virtual{B}$.  Let $\gamma = (\gamma_a \in \ZZ_{>0} \mid a \in I)$. A \emph{virtual crystal} is the quadruple $(V, \virtual{B}, \phi, \gamma)$ such that $V$ has an abstract $U_q(\g)$-crystal structure defined using the Kashiwara operators $e_a^v$ and $f_a^v$ from \eqref{eq:virtual_crystal_ops} above,
\begin{align*}
\varepsilon_a &:= \gamma_a^{-1} \virtual{\varepsilon}_b, & 
\varphi_a &:= \gamma_a^{-1} \virtual{\varphi}_b, &\text{ for all } b\in \phi^{-1}(a),
\end{align*}
and $\wt := \Psi^{-1} \circ \virtual{\wt}$.
\end{dfn}

\begin{remark}
The definition of $\varepsilon_a$ and $\varphi_a$ forces all of our virtual crystals to be \emph{aligned}, as defined in \cite{OSS03III,OSS03II}.
\end{remark}

We say $B$ \emph{virtualizes} in $\virtual{B}$ if there exists a $U_q(\g)$-crystal isomorphism $v \colon B \longrightarrow V$.  The resulting isomorphism is called the \emph{virtualization map}. We denote the quadruple $(V,\virtual{B},\phi,\gamma)$ simply by $V$ when there's no risk of confusion.

The virtualization map $v$ from rigged configurations of type $\g_0$ to rigged configurations of type $\virtual{\g}_0$ is defined by
\begin{equation}
\label{eq:virtual_rc}
\virtual{m}_{\gamma_a i}^{(b)} = m_i^{(a)}, \quad \virtual{J}_{\gamma_a i}^{(b)} = \gamma_a J_i^{(a)},
\end{equation}
for all $b \in \phi^{-1}(a)$.  A $U_q(\g_0)$-crystal structure on rigged configurations is defined by using virtual crystals \cite{OSS03II}. Moreover, we use Equation~\eqref{eq:virtual_rc} to describe the virtual image of the type $\g_0$ rigged configurations into type $\virtual{\g}_0$ rigged configurations. Explicitly $(\virtual{\nu}, \virtual{J}) \in V$ if and only if
\begin{enumerate}
\item $\virtual{m}_i^{(b)} = \virtual{m}_i^{(b^{\prime})}$ and $\virtual{J}_i^{(b)} = \virtual{J}_i^{(b^{\prime})}$ for all $b, b^{\prime} \in \phi^{-1}(a)$,
\item $\virtual{m}^{(b)}_i \in \gamma_a \ZZ$ and $\virtual{J}^{(b)}_i \in \gamma_a \ZZ$ for all $b \in \phi^{-1}(a)$, and
\item $\virtual{m}^{(b)}_i = 0$ and $\virtual{J}^{(b)}_i = 0$ for all $j \notin \gamma_a \ZZ$ for all $b \in \phi^{-1}(a)$.
\end{enumerate}

\begin{ex}
Consider the rigged configuration in type $C_2$
\[
\begin{tikzpicture}[scale=.35,anchor=top]
 \rpp{2}{1}{1}
 \begin{scope}[xshift=6cm]
 \rpp{1,1}{-1,-1}{-1,-1}
 \end{scope}
\end{tikzpicture}
\]
with $L_1^{(1)} = L_1^{(2)} = 1$, all other $L_i^{(a)} = 0$, and weight $\Lambda_1 - \Lambda_2$. The corresponding virtual rigged configuration in type $A_3$ is
\[
\begin{tikzpicture}[scale=.35,anchor=top]
 \rpp{2}{1}{1}
 \begin{scope}[xshift=6cm]
 \rpp{2,2}{-2,-2}{-2,-2}
 \end{scope}
 \begin{scope}[xshift=12cm]
 \rpp{2}{1}{1}
 \end{scope}
\end{tikzpicture}
\]
with $L_1^{(1)} = L_1^{(3)} = L_2^{(2)} = 1$, all other $L_i^{(a)} = 0$, and weight $\Lambda_1 + \Lambda_3 - 2 \Lambda_2$.
\end{ex}

\begin{remark}
There exist rigged configurations for $U_q(\g_0)$-crystals when $\g$ is of twisted affine type by considering $U_q^{\prime}(\g)$ crystals; however, we omit those here in order to avoid confusion as we will be considering rigged configurations for $U_q(\g)$-crystals in the sequel. In particular, for type $A_{2n}^{(2)\dagger}$, the riggings of $\nu^{(n)}$ are in $\frac{1}{2}\ZZ$. See~\cite{OSS03II} for more information.
\end{remark}

We note that it is sufficient to consider single tensor factors by the following proposition.

\begin{prop}[{\cite[Prop.~6.4]{OSS03III}}]
\label{prop:aligned_tensor}
Virtual crystals form a tensor category.
\end{prop}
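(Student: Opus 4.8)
```latex
The plan is to establish that the virtualization construction is compatible with tensor products, so that $V_1 \otimes V_2$ (with the induced $U_q(\g)$-crystal structure) is itself a virtual crystal inside $\widehat{B}_1 \otimes \widehat{B}_2$. First I would set up notation: let $(V_1, \widehat{B}_1, \phi, \gamma)$ and $(V_2, \widehat{B}_2, \phi, \gamma)$ be virtual crystals with the \emph{same} folding data $\phi$ and scaling factors $\gamma$, and consider the subset $V_1 \otimes V_2 \subseteq \widehat{B}_1 \otimes \widehat{B}_2$. The claim to be verified is that $V_1 \otimes V_2$ is stable under the virtual Kashiwara operators $e_a^v = \prod_{b \in \phi^{-1}(a)} \widehat{e}_b^{\,\gamma_a}$ and $f_a^v = \prod_{b \in \phi^{-1}(a)} \widehat{f}_b^{\,\gamma_a}$, and that the resulting $U_q(\g)$-crystal structure agrees with the ordinary tensor product of $V_1$ and $V_2$ as abstract $U_q(\g)$-crystals.

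The key steps, in order: (1) Recall that since the nodes in $\phi^{-1}(a)$ are mutually non-adjacent, the operators $\widehat{e}_b$ for $b \in \phi^{-1}(a)$ pairwise commute on $\widehat{B}_1 \otimes \widehat{B}_2$, so $e_a^v$ is well-defined; the same holds for $f_a^v$. (2) Use the signature rule on $\widehat{B}_1 \otimes \widehat{B}_2$ together with the aligned condition (i.e., $\widehat{\varepsilon}_b$ and $\widehat{\varphi}_b$ are independent of $b \in \phi^{-1}(a)$ and equal to $\gamma_a \varepsilon_a$, $\gamma_a \varphi_a$ on virtual elements) to show that applying the block $\prod_b \widehat{e}_b^{\,\gamma_a}$ to $\widehat{b}_2 \otimes \widehat{b}_1$ acts on the same tensor factor throughout and mirrors the ordinary tensor product rule for $e_a$ on $V_2 \otimes V_1$: the comparison $\widehat{\varepsilon}_b(\widehat{b}_2) \gtrless \widehat{\varphi}_b(\widehat{b}_1)$ reduces, after dividing by $\gamma_a$, to $\varepsilon_a(b_2) \gtrless \varphi_a(b_1)$. (3) Conclude that $e_a^v(\widehat{b}_2 \otimes \widehat{b}_1)$ equals $(e_a^v \widehat{b}_2) \otimes \widehat{b}_1$ or $\widehat{b}_2 \otimes (e_a^v \widehat{b}_1)$ according to exactly the same inequality that governs $e_a$ on $V_2 \otimes V_1$, and since each $V_i$ is already a virtual crystal these single-tensor-factor images lie in $V_i$; hence $V_1 \otimes V_2$ is closed. (4) Verify the weight, $\varepsilon_a$, and $\varphi_a$ compatibility: $\wt = \Psi^{-1} \circ \widehat{\wt}$ is additive so it respects tensor products, and the $\varepsilon_a, \varphi_a$ formulas in Definition~\ref{def:virtual} combined with the tensor product formulas on $\widehat{B}_1 \otimes \widehat{B}_2$ yield precisely the tensor product formulas for $V_1 \otimes V_2$ after scaling by $\gamma_a^{-1}$, again using that $\langle h_a, \wt(b)\rangle = \gamma_a^{-1}\langle \widehat{h}_b, \widehat{\wt}(\widehat b)\rangle$ under the embedding. (5) Finally, iterate to arbitrary finite tensor products $V_t \otimes \cdots \otimes V_1$ by induction, using associativity of the tensor product.

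The main obstacle I expect is step (2): carefully checking that the block of $\gamma_a$ consecutive applications of the commuting operators $\widehat{e}_b$ (one for each $b \in \phi^{-1}(a)$, each raised to the $\gamma_a$) never ``switches'' which tensor factor it acts on partway through the block, and that it exactly reproduces the single application of $e_a$ in the ordinary tensor product. This is where the aligned hypothesis is essential — without it the intermediate signatures could fall out of lockstep across the different $b \in \phi^{-1}(a)$. One clean way to handle this is to invoke the known fact that on a single virtual crystal $V_i$ the operator $e_a^v$ already behaves like a genuine crystal operator with $\varepsilon_a, \varphi_a$ as stated, and then run the signature rule for $e_a$ on the $U_q(\g)$-crystal $V_2 \otimes V_1$ in parallel with the signature rule for the block $\prod_b \widehat e_b^{\,\gamma_a}$ on $\widehat{B}_2 \otimes \widehat{B}_1$, matching them term by term. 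Alternatively, one can cite that this compatibility is established in the references \cite{OSS03III,OSS03II} for the relevant cases and observe that the argument only uses the abstract aligned virtual crystal axioms, hence applies verbatim in the present generality.
```
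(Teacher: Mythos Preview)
Your proposal is a reasonable self-contained sketch, but you should be aware that the paper does not actually give a proof of this proposition: it is stated with a citation to \cite[Prop.~6.4]{OSS03III}, and the only commentary the paper adds is the remark immediately following it, namely that although \cite{OSS03III} works in the $U_q'(\g)$ setting, the argument there ``does not use the $U_q'(\g)$-crystals condition, but instead is a statement about the tensor product rule.''  So there is no in-paper proof to compare against.

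That said, your outline is exactly the kind of argument the cited reference carries out, and it is consistent with the paper's one-line summary: everything hinges on the tensor product rule together with the aligned condition.  Your identification of step~(2) as the crux --- checking that the block $\prod_{b\in\phi^{-1}(a)}\widehat{e}_b^{\,\gamma_a}$ never switches tensor factors midway, because the comparison $\widehat{\varepsilon}_b(\widehat b_2)\gtrless\widehat{\varphi}_b(\widehat b_1)$ is independent of $b$ and scales to $\varepsilon_a(b_2)\gtrless\varphi_a(b_1)$ --- is correct and is precisely where alignedness is used.  If you want your write-up to match the paper's level of detail, it would suffice to say that the result is \cite[Prop.~6.4]{OSS03III} and that the proof there only uses the abstract tensor product rule and the aligned axioms, hence carries over verbatim to the generality of Definition~\ref{def:virtual}.
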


Although \cite{OSS03III} is concerned with $U_q'(\g)$-crystals, the proof of Proposition \ref{prop:aligned_tensor} does not use the $U_q^{\prime}(\g)$-crystals condition, but instead is a statement about the tensor product rule. It has been cited as above in other papers; e.g., Proposition~3.3 of~\cite{OSS03II}.

\subsection{Extending Theorem~\ref{thm:RCinf} to all finite types}
In this section we assume $\g$ is of non-simply-laced finite type. For the vacancy numbers, we just consider this as the classical subcrystal in the corresponding untwisted affine type.  Again, let $\RC(\infty)$ be the set generated by $(\nu_\emptyset,J_\emptyset)$ and $e_a,f_a$ for $a\in I$,  where $e_a$ and $f_a$ are defined as in Section \ref{sec:simplyfinite}.

Proposition~\ref{prop:rc_nsl_virtual} and Theorem~\ref{thm:rc_nsl} below are proven in~\cite{SS14} for all finite types.  We will require these results in the sequel.

\begin{prop}
\label{prop:rc_nsl_virtual}
The crystal $\RC(\lambda)$ virtualizes in $\RC(\virtual{\lambda})$.
\end{prop}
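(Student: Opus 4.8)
The plan is to verify the three defining conditions for $(\virtual\nu,\virtual J)$ to lie in the virtual crystal $V$ from Definition~\ref{def:virtual}, and then to check that the virtualization map $v\colon\RC(\lambda)\to V$ defined by~\eqref{eq:virtual_rc} is a $U_q(\g_0)$-crystal isomorphism. Since $\RC(\lambda) = X_{(\nu_\emptyset, J_\emptyset)}$ is generated from the empty rigged configuration by the Kashiwara operators, and $v$ sends $(\nu_\emptyset, J_\emptyset)$ to $(\virtual\nu_\emptyset, \virtual J_\emptyset)$, it suffices to show that $v$ intertwines the Kashiwara operators, i.e.\ $v \circ e_a = e_a^v \circ v$ and $v \circ f_a = f_a^v \circ v$ for all $a \in I$, where $e_a^v, f_a^v$ are the virtual operators from~\eqref{eq:virtual_crystal_ops}, and that $v$ is a bijection onto $V$.

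First I would record the effect of the scaling~\eqref{eq:virtual_rc} on vacancy numbers: using the generalized vacancy number formula and the fact that $\virtual\g$ is simply-laced (so $\virtual A_{bb'} \in \{0,-1\}$ along the folded edges in a controlled way), one computes that $\virtual p_{\gamma_a i}^{(b)} = \gamma_a\, p_i^{(a)}$ for all $b \in \phi^{-1}(a)$. This is the key numerical identity; it is essentially the content of Lemma~\ref{lemma:convexity}'s setup with $C_{a,b}, C_{a,b}', C_{a,b}^\vee$ chosen to be the folding data, and it uses the definition of the $\gamma_a$ together with $\Psi(\alpha_a) = \gamma_a \sum_{b\in\phi^{-1}(a)}\virtual\alpha_b$. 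Granting this, the three membership conditions for $V$ are immediate from~\eqref{eq:virtual_rc}: equality of parts and riggings across $\phi^{-1}(a)$, divisibility of $\virtual m$ and $\virtual J$ by $\gamma_a$, and vanishing in rows whose length is not a multiple of $\gamma_a$.

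Next I would check the intertwining of the Kashiwara operators. For $e_a$ acting on $(\nu, J)^{(a)}$, it selects the shortest string of minimal (negative) label $x$ and shortens it; on the virtual side, a string of length $i$ with rigging $x$ becomes a string of length $\gamma_a i$ with rigging $\gamma_a x$, and since $\virtual p_{\gamma_a i}^{(b)} = \gamma_a p_i^{(a)}$ the minimal label on the virtual side is $\gamma_a x$, attained by strings of length $\gamma_a i$. Applying $\virtual e_b$ exactly $\gamma_a$ times walks this string down from $(\gamma_a i, \gamma_a x)$ through $(\gamma_a i - 1, \gamma_a x + 1), \dots$ to $(\gamma_a(i-1), \gamma_a(x+1))$ — here one must check that each intermediate step indeed acts on the correct string (the colabel bookkeeping in Definition~\ref{def:rc_crystal_ops}, together with the convexity from Lemma~\ref{lemma:convexity}, guarantees this), and that doing this for all $b \in \phi^{-1}(a)$ — which commute — produces exactly $v(e_a(\nu,J))$. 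The argument for $f_a$ is parallel, including the case $x > 0$ where $f_a$ adjoins a string $(1,-1)$ and $f_a^v$ adjoins $\gamma_a$ copies of a string $(\gamma_a, -\gamma_a)$ across $\phi^{-1}(a)$. Compatibility of $\wt$, $\varepsilon_a$, $\varphi_a$ then follows from~\eqref{eq:virtual_weight}, the identity $\virtual p_{\gamma_a i}^{(b)} = \gamma_a p_i^{(a)}$ (applied at $i = \infty$), and Proposition~\ref{prop:ep_phi} on both sides.

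The main obstacle I expect is the step showing that $\gamma_a$-fold application of $\virtual e_b$ (resp.\ $\virtual f_b$) exactly mirrors one application of $e_a$ (resp.\ $f_a$) on the scaled configuration: one must argue that no "extra" strings interfere at the intermediate stages, which requires carefully tracking which string carries the minimal label as the length decreases step by step, and this is precisely where the convexity inequality $2p_i^{(a)} \ge p_{i-1}^{(a)} + p_{i+1}^{(a)}$ of Lemma~\ref{lemma:convexity} — and its virtual analogue — is indispensable. Since $\g$ is of finite type here, I would also note that one may instead cite the established virtualization of $\RC(\lambda)$ for finite-type highest weight crystals via the known isomorphism $\RC(\lambda) \cong B(\lambda)$ of Theorem~\ref{S06-thm} (valid in the simply-laced cover $\virtual\g_0$) combined with the classical virtualization $v\colon B(\lambda) \hookrightarrow B(\virtual\lambda)$ of~\cite{baker2000,OSS03II}, reducing the problem to checking that the rigged-configuration map~\eqref{eq:virtual_rc} is compatible with these two identifications — this is the route taken in~\cite{SS14}, and it is the cleanest way to package the argument.
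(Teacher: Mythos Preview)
The paper gives no proof of this proposition: it simply records that Proposition~\ref{prop:rc_nsl_virtual} and Theorem~\ref{thm:rc_nsl} ``are proven in~\cite{SS14} for all finite types,'' and uses them as black boxes. Your proposal goes further than the paper by actually sketching the argument, and you correctly identify at the end that the route you outline is the one taken in~\cite{SS14}. In that sense your proposal and the paper agree, with your version being strictly more informative.

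Your outline is sound: the key numerical identity $\virtual p_{\gamma_a i}^{(b)} = \gamma_a\, p_i^{(a)}$ is indeed the heart of the matter, and once it is in hand the intertwining of $e_a, f_a$ with $e_a^v, f_a^v$ follows by the string-tracking argument you describe. One small slip: in the case $x>0$ you write that $f_a^v$ ``adjoins $\gamma_a$ copies of a string $(\gamma_a,-\gamma_a)$ across $\phi^{-1}(a)$.'' What actually happens is that $\virtual f_b^{\,\gamma_a}$ builds a \emph{single} string $(\gamma_a,-\gamma_a)$ in $(\virtual\nu,\virtual J)^{(b)}$ (the first application adds $(1,-1)$, and each subsequent one lengthens that same string), and this occurs once for each $b\in\phi^{-1}(a)$; the number of such $b$ is $\lvert\phi^{-1}(a)\rvert$, which need not equal $\gamma_a$ (e.g.\ for $C_n\hookrightarrow A_{2n-1}$ one has $\lvert\phi^{-1}(n)\rvert=1$ but $\gamma_n=2$). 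This does not affect the structure of your argument, only the bookkeeping.
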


\begin{thm}
\label{thm:rc_nsl}
Let $\g$ be of finite type. We have $\RC(\lambda) \iso B(\lambda)$.
\end{thm}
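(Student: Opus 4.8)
The plan is to reduce the non-simply-laced finite type statement to the already-established simply-laced case (Theorem~\ref{RCinf_simply-laced}, or rather its highest-weight analogue Theorem~\ref{S06-thm} extended to arbitrary simply-laced types) by means of the virtualization machinery developed in Section~\ref{sec:virtual_crystals}. The target is to show $\RC(\lambda) \cong B(\lambda)$ as $U_q(\g)$-crystals, where $\g$ is of non-simply-laced finite type and $\RC(\lambda)$ is the rigged configuration crystal $X_{(\nu_\emptyset, J_\emptyset)}$ with vacancy numbers taken from the corresponding untwisted affine embedding.

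First I would invoke Proposition~\ref{prop:rc_nsl_virtual}, which says $\RC(\lambda)$ virtualizes in $\RC(\virtual\lambda)$; that is, there is a $U_q(\g)$-crystal isomorphism $v$ from $\RC(\lambda)$ onto a virtual subcrystal $V \subseteq \RC(\virtual\lambda)$, with $v$ defined on rigged configurations by \eqref{eq:virtual_rc} and intertwining $e_a, f_a$ with the virtual operators $e_a^v, f_a^v$ of \eqref{eq:virtual_crystal_ops}. Next, since $\virtual\g$ is of simply-laced finite type, Theorem~\ref{S06-thm} (valid for arbitrary simply-laced types, as noted after Proposition~\ref{prop:ep_phi}) gives a $U_q(\virtual\g)$-crystal isomorphism $\RC(\virtual\lambda) \cong B(\virtual\lambda)$ sending the highest weight rigged configuration $(\nu_\emptyset, J_\emptyset)$ to $u_{\virtual\lambda}$. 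Transporting the virtual subcrystal $V$ through this isomorphism, I obtain a virtual subcrystal of $B(\virtual\lambda)$. The classical statement recalled in Section~\ref{sec:virtual_crystals} (from \cite{baker2000, OSS03II, OSS03III}) is precisely that $B(\lambda)$ virtualizes in $B(\virtual\lambda)$ via the embedding $v \colon B(\lambda) \hookrightarrow B(\virtual\lambda)$ arising from the diagram folding \eqref{eq:classical_embeddings}, and that the image is the unique virtual subcrystal containing $u_{\virtual\lambda}$. Thus it remains to identify the image of $\RC(\lambda)$ under the composite $\RC(\lambda) \xrightarrow{v} V \subseteq \RC(\virtual\lambda) \cong B(\virtual\lambda)$ with the image of $B(\lambda)$ under the classical virtualization $v$.

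The key point for the identification is that both virtual subcrystals are generated from the highest weight vector by the \emph{same} virtual Kashiwara operators $f_a^v = \prod_{b \in \phi^{-1}(a)} \virtual f_b^{\gamma_a}$, and both highest weight vectors correspond to $u_{\virtual\lambda}$ under the chain of isomorphisms (the rigged configuration side because Theorem~\ref{S06-thm} sends $(\nu_\emptyset, J_\emptyset) \mapsto u_{\virtual\lambda}$, and because $v$ sends $(\nu_\emptyset, J_\emptyset) \in \RC(\lambda)$ to $(\nu_\emptyset, J_\emptyset) \in \RC(\virtual\lambda)$ by \eqref{eq:virtual_rc}). Since the graph generated by a fixed vertex under a fixed collection of partial operators is uniquely determined, the two images coincide as subsets equipped with their $\g$-crystal structure (the weight, $\varepsilon_a$, $\varphi_a$ being read off via $\Psi^{-1}$ and the $\gamma_a^{-1}$ rescaling in Definition~\ref{def:virtual}, consistently on both sides). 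Composing $v \colon \RC(\lambda) \xrightarrow{\sim} V$, the isomorphism $V \xrightarrow{\sim} v(B(\lambda))$ just described, and the inverse $v^{-1} \colon v(B(\lambda)) \xrightarrow{\sim} B(\lambda)$ of the classical virtualization yields the desired $U_q(\g)$-crystal isomorphism $\RC(\lambda) \cong B(\lambda)$.

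I expect the main obstacle to be the bookkeeping needed to verify that the virtual subcrystal structure on the rigged configuration side genuinely matches the classical one — in particular, checking that Definition~\ref{def:rc_crystal_ops} applied inside $\RC(\virtual\lambda)$, when restricted to configurations satisfying the three conditions following \eqref{eq:virtual_rc}, really does produce the operators $e_a^v, f_a^v$ as claimed, and that the vacancy-number conventions (taking $\g$ of finite type as the classical subalgebra of the untwisted affine type, with its $\gamma_a$) are compatible with the weight map identification $\wt = \Psi^{-1} \circ \virtual\wt$. These are exactly the contents of Proposition~\ref{prop:rc_nsl_virtual} and the results of \cite{SS14}, which we are permitted to assume; granting those, the argument above is essentially a diagram chase through the commuting square \eqref{eq:virtual_weight} and its rigged configuration analogue. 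One should also remark that Theorem~\ref{thm:rc_nsl} as stated covers all finite types, so the simply-laced case is just Theorem~\ref{S06-thm}/\ref{RCinf_simply-laced} and only the non-simply-laced case requires the virtualization argument.
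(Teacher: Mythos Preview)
Your proposal is correct and follows essentially the same approach as the paper indicates. Note that the paper does not actually supply a proof here: it states that both Proposition~\ref{prop:rc_nsl_virtual} and Theorem~\ref{thm:rc_nsl} are proven in~\cite{SS14}, and the subsequent remark confirms that the proof there uses precisely the ingredient you identified, namely that $B(\lambda)$ virtualizes in $B(\virtual{\lambda})$ in finite types. Your outline---virtualize $\RC(\lambda)$ inside $\RC(\virtual{\lambda})$ via Proposition~\ref{prop:rc_nsl_virtual}, apply the simply-laced isomorphism $\RC(\virtual{\lambda})\cong B(\virtual{\lambda})$, and then identify the resulting virtual subcrystal with the image of $B(\lambda)$ under the classical virtualization by matching highest weight vectors and virtual operators---is exactly the argument one expects from that remark, and your diagram-chase through the virtualization square is sound.
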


\begin{remark}
Note the proof of Theorem \ref{thm:rc_nsl} uses the fact that $B(\lambda)$ virtualizes in $B(\virtual{\lambda})$ in finite types~\cite{baker2000, OSS03III, OSS03II}.
\end{remark}

By combining the virtualization results above with the method of proof given for Theorem~\ref{thm:RCinf}, we may extend Theorem~\ref{thm:RCinf} to include non-simply-laced finite types.

\begin{thm}
\label{thm:RCinf_nsl}
Let $\g$ be of any finite type.  Then there exists a $U_q(\g)$-crystal isomorphism $\RC(\infty) \iso B(\infty)$ such that $(\nu_{\emptyset}, J_\emptyset) \mapsto u_{\infty}$.
\end{thm}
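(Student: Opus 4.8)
The plan is to mimic the argument for Theorem~\ref{thm:RCinf} (via Lemma~\ref{lemma:direct_limit}), but now using the virtualization machinery to transfer the already-established simply-laced result to non-simply-laced finite types. First I would fix a non-simply-laced finite type $\g$ together with one of the embeddings $\g \hookrightarrow \virtual{\g}$ from~\eqref{eq:classical_embeddings}, so that $\virtual{\g}$ is simply-laced and Theorem~\ref{RCinf_simply-laced} applies to it. The strategy is to show that $\RC(\infty)$ (for $\g$) virtualizes inside $\RC(\virtual{\infty})$ (for $\virtual{\g}$), with the virtualization map given on rigged configurations by the same formula~\eqref{eq:virtual_rc} as in the highest weight case, and that this virtual subcrystal coincides with the image of $B(\infty) \hookrightarrow B(\virtual{\infty})$ under the known virtualization of $B(\infty)$ in simply-laced type.

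The key steps, in order, are: (1) Define the map $v\colon \RC(\infty) \longrightarrow \RC(\virtual{\infty})$ on rigged configurations by $\virtual{m}_{\gamma_a i}^{(b)} = m_i^{(a)}$, $\virtual{J}_{\gamma_a i}^{(b)} = \gamma_a J_i^{(a)}$ for $b \in \phi^{-1}(a)$, starting from $v(\nu_\emptyset,J_\emptyset) = (\nu_\emptyset, J_\emptyset)$. (2) Check that $v$ intertwines the Kashiwara operators, i.e.\ $v \circ f_a = f_a^v \circ v$ and $v \circ e_a = e_a^v \circ v$, where $f_a^v, e_a^v$ are the virtual operators of~\eqref{eq:virtual_crystal_ops}; this is the heart of the matter and should follow by the same local computation already used in the proof of Proposition~\ref{prop:rc_nsl_virtual} in~\cite{SS14}, since the string-manipulation rules of Definition~\ref{def:rc_crystal_ops} are unchanged and only the validity condition (which is now absent on both sides) differs. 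Here one uses the relation $\Psi(\alpha_a) = \gamma_a \sum_{b\in\phi^{-1}(a)} \virtual{\alpha}_b$ on vacancy numbers, and the fact that the $\virtual{f}_b$ for $b \in \phi^{-1}(a)$ commute and act on identical parts. (3) Verify $v$ preserves the remaining crystal data: $\wt = \Psi^{-1}\circ\virtual{\wt}$ is immediate from~\eqref{Binf_wt} and~\eqref{eq:virtual_rc}, and $\varepsilon_a = \gamma_a^{-1}\virtual{\varepsilon}_b$, $\varphi_a = \gamma_a^{-1}\virtual{\varphi}_b$ follow using Proposition~\ref{prop:ep_phi} applied in both types, together with $p_\infty^{(a)} = \inner{h_a}{\wt}$ and the scaling of labels by $\gamma_a$. (4) Conclude that the image of $v$ is exactly the virtual subcrystal $V \subseteq \RC(\virtual{\infty})$ cut out by conditions (1)--(3) following~\eqref{eq:virtual_rc}, so $\RC(\infty) \iso V$ as $U_q(\g)$-crystals. (5) On the other hand, since $\virtual{\g}$ is simply-laced, Theorem~\ref{RCinf_simply-laced} gives $\RC(\virtual{\infty}) \iso B(\virtual{\infty})$ sending $(\nu_\emptyset, J_\emptyset) \mapsto u_{\virtual\infty}$; and $B(\infty)$ virtualizes in $B(\virtual{\infty})$ with image $V'$. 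Identify $V$ with $V'$ under this isomorphism — this is a connectedness argument: both are the subcrystal generated from the common highest weight vector by the virtual lowering operators $f_a^v$, and the isomorphism $\RC(\virtual{\infty}) \iso B(\virtual{\infty})$ commutes with all $\virtual{f}_b$, hence with each $f_a^v$. Therefore $\RC(\infty) \iso V \iso V' \iso B(\infty)$, and chasing the highest weight vector gives $(\nu_\emptyset, J_\emptyset) \mapsto u_\infty$.

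The main obstacle I anticipate is step (2), verifying that $v$ commutes with the Kashiwara operators $e_a$ and the \emph{modified} operators $f_a$ on the unrestricted objects. The subtlety is that $f_a$ on $\RC(\infty)$ has two cases (adding a box $(1,-1)$ when the minimal label is positive versus lengthening an existing maximal-length string with minimal label), and one must check that applying $\virtual{f}_b^{\gamma_a}$ in each factor $b\in\phi^{-1}(a)$ reproduces exactly the $\gamma_a$-scaled version of whichever case occurs in $\g$ — in particular that the branch taken upstairs is synchronized with the branch downstairs and that no $\virtual{f}_b$ in the product returns $0$ prematurely. Since there is now no validity obstruction, the argument should actually be cleaner than in the $\RC(\lambda)$ case of~\cite{SS14}: one needs only the colabel-preservation bookkeeping and the observation that scaling all lengths and labels by $\gamma_a$ commutes with the string operations. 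A careful but routine case analysis, parallel to the proof of Lemma~\ref{RCcrystal}, settles it.
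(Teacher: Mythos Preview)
Your proposal is correct but takes a genuinely different route from the paper. The paper's proof is one line: rerun the direct-limit argument of Section~\ref{sec:simplyfinite} verbatim, replacing the simply-laced input Theorem~\ref{S06-thm} by Theorem~\ref{thm:rc_nsl} (i.e., $\RC(\lambda)\iso B(\lambda)$ for all finite types, proved in~\cite{SS14}). Since Lemmas~\ref{RCcrystal}--\ref{lemma:direct_limit} carry through once that isomorphism is in hand, the conclusion follows immediately.

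You instead work directly at the $\infty$ level: virtualize $\RC(\infty)$ inside $\RC(\virtual{\infty})$, invoke Theorem~\ref{RCinf_simply-laced} for $\virtual{\g}$, and then match the virtual image against the image of $B(\infty)$ in $B(\virtual{\infty})$. This is essentially the shape of the later argument for Theorem~\ref{thm:RCinf_folding}, except you avoid the recognition theorem by assuming that $B(\infty)$ virtualizes in $B(\virtual{\infty})$. That last fact is Proposition~\ref{prop:virtual_Binf} in the paper (proved via the directed system and the known virtualization of $B(\lambda)$ in $B(\virtual{\lambda})$), so you should state and justify it rather than treat it as given; without it your step~(5) has a gap. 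Note also that your opening sentence promises to ``mimic Lemma~\ref{lemma:direct_limit}'' but the body never uses the direct limit for $\g$ itself. The paper's approach is shorter and reuses existing machinery with a single substitution; yours is more self-contained at the $\infty$ level and previews the technique needed beyond finite type, at the cost of redoing the virtualization check for the modified $f_a$.
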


\begin{proof}
The proof of Theorem~\ref{thm:RCinf} holds here by following Section~\ref{sec:simplyfinite} and using Theorem~\ref{thm:rc_nsl} in place of Theorem~\ref{S06-thm}.
\end{proof}

We also have the virtualization of $B(\infty)$ crystals.

\begin{prop}
\label{prop:virtual_Binf}
Let $\g$ be of any finite type. The $U_q(\g)$-crystal $B(\infty)$ virtualizes in the $U_q(\virtual{\g})$-crystal $\virtual{B}(\infty)$.
\end{prop}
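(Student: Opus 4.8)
The plan is to use the characterization $\RC(\infty) = \varinjlim_{\lambda \in P^+} T_{-\lambda} \otimes \RC(\lambda)$ (Lemma~\ref{lemma:direct_limit}, which holds in all finite types by Theorem~\ref{thm:RCinf_nsl}) together with the virtualization of $B(\lambda)$ already established in Proposition~\ref{prop:rc_nsl_virtual}, and to check that virtualization is compatible with the direct limit. First I would define the candidate virtualization map $v \colon \RC(\infty) \longrightarrow \RC(\virtual\infty) = \virtual{B}(\infty)$ by the same formula~\eqref{eq:virtual_rc} used for finite highest-weight rigged configurations, namely $\virtual{m}_{\gamma_a i}^{(b)} = m_i^{(a)}$ and $\virtual{J}_{\gamma_a i}^{(b)} = \gamma_a J_i^{(a)}$ for all $b \in \phi^{-1}(a)$, and take the image $V$ to be the set of $(\virtual\nu, \virtual J)$ satisfying the three explicit conditions (equal parts/riggings on a $\phi$-fibre, divisibility by $\gamma_a$, vanishing off $\gamma_a\ZZ$) listed just after~\eqref{eq:virtual_rc}. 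Since $(\nu_\emptyset, J_\emptyset)$ maps to $(\virtual\nu_\emptyset, \virtual J_\emptyset)$, the map sends highest weight vector to highest weight vector.

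Next I would verify the virtual crystal axioms of Definition~\ref{def:virtual} for $(V, \virtual{B}(\infty), \phi, \gamma)$: that $v$ intertwines $e_a$ with $e_a^v = \prod_{b \in \phi^{-1}(a)} \virtual e_b^{\gamma_a}$ and $f_a$ with $f_a^v = \prod_{b \in \phi^{-1}(a)} \virtual f_b^{\gamma_a}$, that $\wt = \Psi^{-1} \circ \virtual\wt$, and that $\varepsilon_a = \gamma_a^{-1}\virtual\varepsilon_b$, $\varphi_a = \gamma_a^{-1}\virtual\varphi_b$ for $b \in \phi^{-1}(a)$. The cleanest route is to deduce all of this from Proposition~\ref{prop:rc_nsl_virtual} via the direct limit: for a fixed $(\nu,J) \in \RC(\infty)$ choose $\lambda \in P^+$ large enough that $(\nu,J)$ (with vacancy numbers recomputed for $\lambda$) lies in $\RC(\lambda)$ and, after applying a finite word of $f_a$'s to reach it, every relevant application of $f_a$ stays valid; then the finite-type virtualization map restricted to this finite subcrystal already intertwines the operators and statistics, and these local identities are exactly the $\RC(\infty)$ identities because $e_a, f_a$ on $\RC(\infty)$ restrict to the finite ones on such a sufficiently large $\RC(\lambda)$. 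One must also check that the finite-type virtualization maps are compatible with the embeddings $\widetilde I_{\lambda+\mu,\lambda}$ of Lemma~\ref{lemma:embed} and their virtual analogues — but since all of these maps are the identity on the underlying rigged partitions, compatibility is immediate, and $v$ on $\RC(\infty)$ is simply the common extension. Finally I would note that $v$ is injective (it is injective on each $\RC(\lambda)$ by Proposition~\ref{prop:rc_nsl_virtual}, hence on the union) and surjective onto $V$ by construction, giving the required $U_q(\g)$-crystal isomorphism $\RC(\infty) \iso V$; combined with Theorem~\ref{thm:RCinf_nsl} (and its simply-laced analogue for $\virtual\g$) this yields that $B(\infty)$ virtualizes in $\virtual{B}(\infty)$.

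The main obstacle I anticipate is the bookkeeping around the direct-limit argument: one needs to be sure that for a given element and a given $a \in I$, there is a single $\lambda$ large enough that \emph{all} the operators appearing in the virtual identities ($e_a$, $f_a$, and the $\gamma_a$ applications of $\virtual f_b$ or $\virtual e_b$ on the $\virtual\g$ side) are computed by the finite-type rule rather than hitting the validity wall — and that raising operators, which never destroy validity, pose no issue in the other direction. A secondary subtlety is that the virtual image conditions must be checked to be preserved under the modified lowering operators $f_a$ on $\RC(\infty)$ (i.e.\ adding a box or adding a $(1,-1)$ string on a whole $\phi$-fibre simultaneously keeps one inside $V$); this is where the explicit structure of $f_a^v$ as a product over the fibre is used, and it is essentially the same verification as in~\cite{OSS03II, SS14} but now without the validity constraint, so it goes through verbatim.
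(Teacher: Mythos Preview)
Your proposal is correct and follows essentially the same strategy as the paper: deduce the virtualization of $B(\infty)$ from that of the highest-weight crystals by showing compatibility with the directed system. The paper's proof is far terser---it simply asserts that the square with $I_{\lambda+\mu,\lambda}$ on top, $I_{\virtual\lambda+\virtual\mu,\virtual\lambda}$ on the bottom, and the finite virtualization maps on the sides commutes, and declares the result immediate from this; your version unpacks this into the explicit rigged-configuration map~\eqref{eq:virtual_rc}, observes (as you do) that compatibility is automatic because all the maps are the identity on the underlying rigged partitions, and adds the bookkeeping about validity walls and preservation of the image conditions, none of which the paper spells out.
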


\begin{proof}
This follows immediately from the fact that the diagram
\[
\begin{tikzpicture}[xscale=4, yscale=1.5, text height=1.9ex, text depth=0.25ex, baseline=.75cm]
\node (1) at (0,1.5) {$T_{-\lambda} \otimes B(\lambda)$};
\node (2) at (1,1.5) {$T_{-\lambda-\mu}\otimes B(\lambda+\mu)$};
\node (3) at (0,0) {$T_{-\virtual\lambda}\otimes B(\virtual\lambda)$};
\node (4) at (1,0) {$T_{-\virtual{\lambda}-\virtual\mu} \otimes B(\virtual\lambda + \virtual\mu)$};
\path[->,font=\scriptsize]
 (1) edge node[above,outer sep=2]{$I_{\lambda+\mu,\mu}$} (2)
 (3) edge node[below,inner sep=2]{$I_{\virtual\lambda+\virtual\mu,\virtual\mu}$} (4);
\path[right hook->,font=\scriptsize]
 (1) edge node[left] {} (3)
 (2) edge node[right]{} (4);
\end{tikzpicture}
\]
commutes.
\end{proof}

\subsection{Recognition Theorem}\label{sec:recognition_theorem}

From the above, we see that we only need to know the factors $(\gamma_a)_{a \in I}$ in order to show that we get a virtualization of the $U_q(\g)$-crystal of rigged configurations into a $U_q(\virtual{\g})$-crystal by Equation~\eqref{eq:virtual_rc}. Thus we make the following conjecture.

\begin{conj}
\label{conj:foldings}
Let $\g$ be obtained via a diagram folding $\phi$ of a simply-laced type $\virtual{\g}$. There exists $(\gamma_a)_{a \in I}$ such that $\RC(\lambda)$ virtualizes in $\RC(\virtual{\lambda})$ by Equation~{\upshape\ref{eq:virtual_rc}}.
\end{conj}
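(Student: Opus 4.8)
The plan is to establish the conjecture by a direct verification, organized around the recursion that defines the crystal graph $\RC(\lambda)$ from its highest weight vector. First I would fix the candidate data $(\gamma_a)_{a \in I}$: for the diagram foldings coming from affine types, these are precisely the integers prescribed in Section~\ref{sec:virtual_crystals}, and for a general folding $\phi\colon\virtual{\g}\folding\g$ one takes $\gamma_a$ to be the ratio of the symmetrizing entries (equivalently, built from the edge multiplicities and the orientation of arrows along $\Gamma$, exactly as in the affine list). I would then define $v$ on rigged configurations by Equation~\eqref{eq:virtual_rc}, note that it sends $(\nu_\emptyset,J_\emptyset)$ to $(\virtual\nu_\emptyset,\virtual J_\emptyset)$, and check that the image lands in the subset $V$ characterized by conditions (1)--(3) after that equation. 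The heart of the argument is to show that $v$ intertwines the Kashiwara operators, i.e. $v\circ f_a = f_a^v\circ v$ and $v\circ e_a = e_a^v\circ v$ where $f_a^v,e_a^v$ are the virtual operators of~\eqref{eq:virtual_crystal_ops}, and that $v$ matches $\wt$, $\varepsilon_a$, $\varphi_a$ with their virtual counterparts.

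The key computational step is a comparison of vacancy numbers. Writing $\virtual p$ for the vacancy numbers in type $\virtual\g$ and $p$ for those in type $\g$ (with the $\gamma$-scaled min appearing in~\eqref{eq:vacancy_numbers}), I would prove the identity $\virtual p^{(b)}_{\gamma_a i} = \gamma_a\, p^{(a)}_i$ for every $b\in\phi^{-1}(a)$, and $\virtual p^{(b)}_j$ interpolates correctly (is ``convex in between'') for $j\notin\gamma_a\ZZ$. This is where the precise choice of $\gamma_a$ is forced: the factor $A_{ab}/\gamma_b$ in~\eqref{eq:vacancy_numbers} and the fact that $\virtual A$ is symmetric must conspire with the folding $\phi^{-1}$ so that the double sum over $\HH$ for $\virtual\g$ collapses onto the corresponding sum for $\g$. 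Granting this identity, the rank--level--type combinatorics of the operators goes through termwise: the minimal label $x$ of $(\nu,J)^{(a)}$ becomes the common minimal label $\gamma_a x$ of each $(\virtual\nu,\virtual J)^{(b)}$, the minimal/maximal length $\ell$ of a selected string becomes $\gamma_a\ell$, adding/removing a box on a length-$\ell$ string corresponds under $v$ to adding/removing $\gamma_a$ boxes (distributed so the result stays in $V$), and ``keep colabels fixed'' is preserved because colabels scale by $\gamma_a$. Thus $f_a$ on the $\g$-side corresponds to applying $\prod_{b\in\phi^{-1}(a)}\virtual f_b^{\gamma_a}$ on the $\virtual\g$-side, which is exactly $f_a^v$; the $e_a$ case is symmetric. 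The weight compatibility is immediate from~\eqref{RC_weight}, $\wt(\nu_\emptyset,J_\emptyset)=\lambda$, and $\Psi(\alpha_a)=\gamma_a\sum_{b\in\phi^{-1}(a)}\virtual\alpha_b$; the $\varepsilon_a,\varphi_a$ compatibility then follows from Proposition~\ref{prop:ep_phi} applied on both sides together with the vacancy-number identity.

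Finally I would run an induction on the distance from $(\nu_\emptyset,J_\emptyset)$ in the crystal graph: the base case is the highest weight vector, and the inductive step uses the intertwining property just established to conclude that $v$ restricts to a bijection $\RC(\lambda)\to V$ with $V$ a subcrystal of $\RC(\virtual\lambda)$ isomorphic to $\RC(\lambda)$ as an abstract $U_q(\g)$-crystal; by Definition~\ref{def:virtual} this is exactly the assertion that $\RC(\lambda)$ virtualizes in $\RC(\virtual\lambda)$ via~\eqref{eq:virtual_rc}. I expect the main obstacle to be the vacancy-number comparison in the cases where the folding is not uniform on all nodes --- that is, precisely the orientation-dependent prescriptions for $\gamma_a$ in items (1b) and (2) of Section~\ref{sec:virtual_crystals}, and the two exceptional foldings for $A_1^{(1)}$ and $A_2^{(2)}$ --- where one must check by hand that the asymmetric scaling still produces $\virtual p^{(b)}_{\gamma_a i}=\gamma_a p^{(a)}_i$. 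A secondary subtlety is verifying that when $f_a$ acts and several parts of equal length are present, the induced distribution of the $\gamma_a$ added boxes among the (equal) components $\{(\virtual\nu,\virtual J)^{(b)}\}_{b\in\phi^{-1}(a)}$ indeed keeps the configuration in $V$; this should follow from the commutation $\virtual f_b\virtual f_{b'}=\virtual f_{b'}\virtual f_b$ noted after~\eqref{eq:virtual_crystal_ops} together with the equality of the relevant strings across the fiber $\phi^{-1}(a)$.
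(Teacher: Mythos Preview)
The statement you are attempting to prove is a \emph{conjecture} in the paper, not a theorem; the paper does not give a proof of it in full generality. What the paper does establish is the conjecture in the special cases of finite types (via Proposition~\ref{prop:rc_nsl_virtual}, proved in~\cite{SS14}), affine types (the remark following the conjecture notes that the same check goes through using Lemma~\ref{lemma:convexity}), and rank~$2$ types (via the complete bipartite folding with all $\gamma_a=1$). For arbitrary symmetrizable foldings the statement remains open.

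Your outline is essentially the method that succeeds in those known cases: one fixes the $\gamma_a$, proves the vacancy-number identity $\virtual p^{(b)}_{\gamma_a i}=\gamma_a p^{(a)}_i$, and then the intertwining of crystal operators follows by tracking the selected string under the map~\eqref{eq:virtual_rc}. So as a strategy for the finite and affine foldings of Table~\ref{table:affine_embeddings} your plan matches what the paper (together with~\cite{SS14}) does. However, your proposal does not close the gap in general. The step you label ``the key computational step'' --- the vacancy-number comparison --- is exactly where the argument is incomplete: you assert that ``the precise choice of $\gamma_a$ is forced'' and that the sums ``must conspire,'' but you do not produce a general prescription for $\gamma_a$ nor verify the identity for an arbitrary folding. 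Your suggested recipe (``ratio of the symmetrizing entries'') is not what the paper uses even in the affine case, where $\gamma_a$ depends on the orientation of arrows relative to the affine node and can equal~$1$ on nodes with nontrivial symmetrizer; in particular, there is no reason to expect a single uniform formula to work across all symmetrizable types. You yourself flag this as ``the main obstacle,'' and it is: absent a proof of $\virtual p^{(b)}_{\gamma_a i}=\gamma_a p^{(a)}_i$ for a specified $(\gamma_a)_{a\in I}$ in the general setting, the rest of the argument (intertwining of $e_a,f_a$, preservation of $V$, the induction) cannot get started. Thus your proposal is a correct blueprint for the cases the paper already handles, but it does not constitute a proof of the conjecture.
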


We have this for all finite and affine types using the foldings given in Table~\ref{table:affine_embeddings}. We can also show this for all rank $2$ with Cartan matrix
\[
\begin{pmatrix}
2 & x \\
y & 2
\end{pmatrix}
\]
by considering a diagram folding of $K_{x,y}$, the complete bipartite graph on $x$ and $y$ nodes, with $\gamma_1 = \gamma_2 = 1$. In such foldings, it is easy to see that Conjecture~\ref{conj:foldings} holds from Equation~\eqref{eq:vacancy_numbers}. In fact, we believe there exists a $\virtual{\g}$ such that $\gamma_a = 1$ for all $a \in I$, and we call such a folding natural.

In their development of the geometric construction of the crystal basis, Kashiwara and Saito~\cite{KS97} established a recognition theorem for the crystal $B(\infty)$ valid for all symmetrizable Kac-Moody types.  In this section, we will recall the recognition theorem with appropriate definitions and extend Theorem~\ref{thm:RCinf} to all Kac-Moody algebras satisfying Conjecture~\ref{conj:foldings} using the recognition theorem.

\begin{remark}
A straightforward check shows that Proposition~\ref{prop:rc_nsl_virtual} holds in our affine setting, which requires Lemma~\ref{lemma:convexity}, and so Conjecture~\ref{conj:foldings} is true in affine types.
\end{remark}

\begin{remark}
A priori, we do not have that $\RC(\lambda) \iso B(\lambda)$ for arbitrary symmetrizable types, as there is no equivalent version of Table \ref{table:affine_embeddings} which would give the analogous statement to Theorem~\ref{thm:rc_nsl}. Therefore we must change our techniques to show that the crystal $\RC(\infty) \iso B(\infty)$ by using the $B(\infty)$ recognition theorem given in~\cite{KS97}. Nevertheless,  we will be able to show that those $\RC(\lambda)$ carved out of $\RC(\infty)$ are isomorphic to $B(\lambda)$ in Section \ref{sec:projection}.
\end{remark}

From this viewpoint, it would be natural to restrict our attention for affine foldings from Table \ref{table:affine_embeddings} given by
\begin{equation}\label{eq:twisted_embeddings}
\begin{aligned}
D_{n+1}^{(2)} & \lhook\joinrel\longrightarrow A_{2n-1}^{(1)},
& A_{2n-1}^{(2)} & \lhook\joinrel\longrightarrow D_{n+1}^{(1)},
\\ E_6^{(2)} & \lhook\joinrel\longrightarrow E_6^{(1)},
& D_4^{(3)} & \lhook\joinrel\longrightarrow D_4^{(1)},
\end{aligned}
\end{equation}
as these foldings satisfy $\gamma_a = 1$ for all $a\in I$. The corresponding classical foldings from~\eqref{eq:classical_embeddings} are given by
\begin{equation}
\begin{aligned}
B_n & \lhook\joinrel\longrightarrow A_{2n-1},
& C_n & \lhook\joinrel\longrightarrow D_{n+1},
\\ F_4 & \lhook\joinrel\longrightarrow E_6,
& G_2 & \lhook\joinrel\longrightarrow D_4.
\end{aligned}
\end{equation}
We should also note that we can get natural foldings of the other (non-degenerate) affine types by
\begin{align*}
B_n^{(1)}, A_{2n}^{(2)} & \lhook\joinrel\longrightarrow D_{2n+1}^{(1)}
& C_n^{(1)} & \lhook\joinrel\longrightarrow D_{n+1}^{(1)}
\end{align*}
As in Remark~\ref{rem:A2dual}, we have $\Psi(\delta) = c_{\phi(0)} \gamma_{\phi(0)} \virtual{\delta}$.

\begin{dfn}
\label{def:elementary_crystal}
Let $\g$ be a symmetrizable Kac-Moody algebra and fix $a \in I$. Define $\ZZ_{(a)} = \{ z_a(m) \mid m \in \ZZ \}$ with the abstract $U_q(\g)$-crystal structure given by
\begin{gather*}
\wt\bigl(z_a(m)\bigr) = m \alpha_a, \quad 
\varphi_a\bigl(z_a(m)\bigr) = m, 
\quad \varepsilon_a\bigl(z_a(m)\bigr) = -m, \\
\varphi_b\bigl(z_a(m)\bigr) = \varepsilon_b\bigl(z_a(m)\bigr) = -\infty \text{ for } a \neq b,\\
e_a z_a(m) = z_a(m+1), \quad
f_a z_a(m) = z_a(m-1), \\
e_b z_a(m) = f_b z_a(m) = 0 \text{ for } a \neq b.
\end{gather*}
The crystal $\ZZ_{(a)}$ is called an \emph{elementary crystal}.
\end{dfn}

\begin{remark}
The crystal $\ZZ_{(a)}$ was originally denoted by $B_i$ in \cite{K93}.
\end{remark}

We must first prove a technical lemma about the virtual elementary crystals.

\begin{lemma}
\label{lemma:virtual_elementary}
Let $\g$ be a Kac-Moody algebra satisfying Conjecture~{\upshape\ref{conj:foldings}}. Let $\phi$ be the diagram folding with scaling factors $(\gamma_a)_{a \in I}$. Fix some $a \in I$. The elementary crystal $\ZZ_{(a)}$ virtualizes in $\virtual{\ZZ}_{(a)} = \bigotimes_{b \in \phi^{-1}(a)} \ZZ_{(b)}$ (for any order of the factors) with the virtualization map $v_{(a)}$ defined by
\[
z_a(m) \mapsto \bigotimes_{b \in \phi^{-1}(a)} z_b(\gamma_a m).
\]
\end{lemma}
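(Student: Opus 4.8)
The plan is to verify directly that the map $v_{(a)} \colon \ZZ_{(a)} \to \virtual{\ZZ}_{(a)}$ given by $z_a(m) \mapsto \bigotimes_{b \in \phi^{-1}(a)} z_b(\gamma_a m)$ is a $U_q(\g)$-crystal isomorphism onto its image, where the image carries the virtual crystal structure of Definition~\ref{def:virtual}. Since $\ZZ_{(a)}$ is a single-vertex crystal in the sense that it is connected and the map is visibly injective, the only real content is checking that the virtual Kashiwara operators $e_a^v, f_a^v$ and the virtual statistics $\wt, \varepsilon_a, \varphi_a$ restrict correctly. First I would unwind the definition of the virtual operators on $\virtual{\ZZ}_{(a)} = \bigotimes_{b \in \phi^{-1}(a)} \ZZ_{(b)}$: since the nodes $b \in \phi^{-1}(a)$ are mutually non-adjacent (as in Definition~\ref{def:virtual}), the operators $\virtual{e}_b$ for distinct $b$ commute on this tensor product, and on the element $\bigotimes_b z_b(m_b)$ the operator $\virtual{e}_b$ simply increments the $b$-th factor to $z_b(m_b+1)$ regardless of the other factors (because $\varphi_c$ and $\varepsilon_c$ are $-\infty$ on $z_b(m_b)$ for $c \neq b$, so the tensor product signature rule acts independently in each factor). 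Hence $e_a^v = \prod_{b \in \phi^{-1}(a)} \virtual{e}_b^{\gamma_a}$ sends $\bigotimes_b z_b(\gamma_a m) \mapsto \bigotimes_b z_b(\gamma_a m + \gamma_a) = \bigotimes_b z_b(\gamma_a(m+1))$, which is exactly $v_{(a)}(e_a z_a(m))$; similarly for $f_a^v$ and $f_a$. This gives commutation of $v_{(a)}$ with the Kashiwara operators for the index $a$, and for $b \neq a$ both sides are $0$ on $\ZZ_{(a)}$ by definition.

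Next I would check the statistics. For the weight: $\virtual{\wt}\bigl(\bigotimes_{b \in \phi^{-1}(a)} z_b(\gamma_a m)\bigr) = \sum_{b \in \phi^{-1}(a)} \gamma_a m\, \virtual{\alpha}_b = \gamma_a m \sum_{b \in \phi^{-1}(a)} \virtual{\alpha}_b = m \Psi(\alpha_a)$, using the definition $\Psi(\alpha_a) = \gamma_a \sum_{b \in \phi^{-1}(a)} \virtual{\alpha}_b$ from the embedding of weight lattices, and applying $\Psi^{-1}$ gives $m \alpha_a = \wt(z_a(m))$, as required by $\wt = \Psi^{-1} \circ \virtual{\wt}$. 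For $\varepsilon_a$ and $\varphi_a$: pick any $b \in \phi^{-1}(a)$. Using the tensor product formula for $\virtual{\varepsilon}_b$ on $\virtual{\ZZ}_{(a)}$, every factor except the $b$-th contributes $-\infty$ to $\virtual{\varepsilon}_b$ and $\virtual{\varphi}_b$, so $\virtual{\varepsilon}_b\bigl(\bigotimes_{c} z_c(\gamma_a m)\bigr) = \virtual{\varepsilon}_b(z_b(\gamma_a m)) = -\gamma_a m$ and likewise $\virtual{\varphi}_b = \gamma_a m$. Then Definition~\ref{def:virtual} gives $\varepsilon_a = \gamma_a^{-1}\virtual{\varepsilon}_b = -m = \varepsilon_a(z_a(m))$ and $\varphi_a = \gamma_a^{-1}\virtual{\varphi}_b = m = \varphi_a(z_a(m))$, consistently for all $b \in \phi^{-1}(a)$ (so the definition is well-posed here). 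For $b' \neq a$ and any $c \in \phi^{-1}(b')$, the operator $\virtual{\varepsilon}_c$ evaluated on any element of $\virtual{\ZZ}_{(a)}$ is $-\infty$ since $c \notin \phi^{-1}(a)$, giving $\varepsilon_{b'} = \varphi_{b'} = -\infty$ as needed.

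Finally I would assemble these facts: the image $V = v_{(a)}(\ZZ_{(a)}) = \{\bigotimes_{b \in \phi^{-1}(a)} z_b(\gamma_a m) \mid m \in \ZZ\}$ is closed under $e_a^v, f_a^v$ and the computations above show $(V, \virtual{\ZZ}_{(a)}, \phi, \gamma)$ is a virtual crystal whose abstract crystal structure matches that of $\ZZ_{(a)}$ under the bijection $v_{(a)}$; hence $\ZZ_{(a)}$ virtualizes in $\virtual{\ZZ}_{(a)}$ with virtualization map $v_{(a)}$, and the independence of ordering of the tensor factors follows from the commutativity of the $\virtual{e}_b$ (equivalently, one can apply Proposition~\ref{prop:aligned_tensor}). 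I do not anticipate a serious obstacle; the only point requiring a little care is justifying that on the tensor product $\bigotimes_{b \in \phi^{-1}(a)} \ZZ_{(b)}$ each $\virtual{e}_b$ and $\virtual{f}_b$ acts purely in its own factor — this is where the non-adjacency hypothesis of Definition~\ref{def:virtual} and the signature rule for tensor products are used, and it is worth stating explicitly rather than leaving implicit.
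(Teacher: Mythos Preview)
Your proof is correct and follows the same direct-verification strategy as the paper's: compute $\virtual{\varepsilon}_b$, $\virtual{\varphi}_b$, $\virtual{e}_b$, $\virtual{f}_b$, and $\virtual{\wt}$ on $v_{(a)}\bigl(z_a(m)\bigr)$ via the tensor product rules and match them against the data of $\ZZ_{(a)}$; the paper writes out the two-factor case explicitly and then inducts, whereas you handle all factors uniformly. The one spot to tighten is your $\virtual{\varepsilon}_b$ computation: the non-$b$ factors contribute not only $-\infty$ in the max but also a weight-shift term $-\inner{h_b}{\wt(\cdot)}$, and it is precisely the pairwise non-adjacency of the nodes in $\phi^{-1}(a)$ that makes this shift vanish---you acknowledge this at the end, but it is needed at that step (and is exactly what the paper invokes when it writes ``since $b_1 \neq b_2$'').
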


\begin{proof}
If $\virtual{\ZZ}_{(a)} = \ZZ_{(b)}$ where $\{b\} = \phi^{-1}(a)$, then it is easy to see the claim is true from Definition~\ref{def:elementary_crystal}.

Now we assume $\virtual{\ZZ}_{(a)} = \ZZ_{(b_2)} \otimes \ZZ_{(b_1)}$ where $\{b_1, b_2\} = \phi^{-1}(a)$ and $b_1\neq b_2$. If $b \notin \phi^{-1}(a)$, then
\[
\varepsilon_b\bigl(z_{b_2}(\gamma_a m) \otimes z_{b_1}(\gamma_a m)\bigr) = \max(-\infty, -\infty - \inner{h_b}{\gamma_a m \alpha_{b_2}}) = -\infty.
\]
If $b = b_2$, then we have
\begin{align*}
\varepsilon_b\bigl(z_{b_2}(\gamma_a m) \otimes z_{b_1}(\gamma_a m)\bigr) 
&= \max(-\gamma_a m, -\infty - \inner{h_b}{\gamma_a m \alpha_{b_2}}) \\
&= -\gamma_a m \\
&= \gamma_a \varepsilon_a\bigl( z_a(m) \bigr)
\end{align*}
since $-\infty + k = -\infty$ for any finite number $k$. If $b = b_1$, then we have
\begin{align*}
\varepsilon_b\bigl(z_{b_2}(\gamma_a m) \otimes z_{b_1}(\gamma_a m)\bigr) 
&= \max(-\infty, -\gamma_a m - \inner{h_b}{\gamma_a m \alpha_{b_2}}) \\
&= -\gamma_a m \\
&= \gamma_a \varepsilon_a\bigl( z_a(m) \bigr)
\end{align*}
since $b_1\neq b_2$. Similar statements hold for $\varphi_b\bigl(z_{b_2}(\gamma_a m) \otimes z_{b_1}(\gamma_a m)\bigr)$. From the tensor product rule, 
\begin{align*}
e_b\bigl(z_{b_2}(\gamma_a m) \otimes z_{b_1}(\gamma_a m)\bigr)
&= 
\begin{cases}
z_{b_2}(\gamma_a m) \otimes e_b\bigl(z_{b_1}(\gamma_a m)\bigr) & \text{ if } b = b_1, \\
e_b\bigl(z_{b_2}(\gamma_a m)\bigr) \otimes z_{b_1}(\gamma_a m) & \text{ if } b = b_2, \\
0 & \text{ otherwise,}
\end{cases}\\
&= 
\begin{cases}
z_{b_2}(\gamma_a m) \otimes z_{b_1}\bigl(\gamma_a(m+1)\bigr) & \text{ if } b = b_1, \\
z_{b_2}\bigl(\gamma_a (m+1)\bigr) \otimes z_{b_1}(\gamma_a m) & \text{ if } b = b_2, \\
0 & \text{ otherwise,}
\end{cases}
\end{align*}
and
\begin{align*}
f_b\bigl(z_{b_2}(\gamma_a m) \otimes z_{b_1}(\gamma_a m)\bigr)
&= 
\begin{cases}
z_{b_2}(\gamma_a m) \otimes f_b\bigl(z_{b_1}(\gamma_a m)\bigr) & \text{ if } b = b_1, \\
f_b\bigl(z_{b_2}(\gamma_a m)\bigr) \otimes z_{b_1}(\gamma_a m) & \text{ if } b = b_2, \\
0 & \text{ otherwise,}
\end{cases}\\
&= 
\begin{cases}
z_{b_2}(\gamma_a m) \otimes z_{b_1}\bigl(\gamma_a(m-1)\bigr) & \text{ if } b = b_1, \\
z_{b_2}\bigl(\gamma_a (m-1)\bigr) \otimes z_{b_1}(\gamma_a m) & \text{ if } b = b_2, \\
0 & \text{ otherwise.}
\end{cases}
\end{align*}
Thus we have
\begin{align*}
(e^v_a \circ v)\bigl( z_a(m) \bigr) & = e_{b_1}^{\gamma_a} e_{b_2}^{\gamma_a}\bigl(z_{b_2}(\gamma_a m) \otimes z_{b_1}(\gamma_a m)\bigr)
\\ & = z_{b_2}\bigl( \gamma_a (m+1) \bigr) \otimes z_{b_1}\bigl( \gamma_a (m+1) \bigr)
\\ & = v\bigl( z_a(m+1) \bigr) \\
&= v\bigl(e_a z_a(m)\bigr),
\end{align*}
and $(\widehat{e}_{a^{\prime}} \circ v)\bigl( z_a(m) \bigr) = 0 = v\bigl( e_{a^{\prime}} z_a(m) \bigr)$ for $a^{\prime} \neq a$. Similar statements can be shown for $f_a$ and $f_{a^{\prime}}$ for $a^{\prime} \neq a$. Lastly
\[
\wt\bigl(z_{b_2}(\gamma_a m) \otimes z_{b_1}(\gamma_a m)\bigr) = \gamma_a m (\alpha_{b_2} + \alpha_{b_1}) = \virtual{\wt}(z_a(m)).
\]
Therefore $\ZZ_{(a)}$ virtualizes in $\ZZ_{(b_2)} \otimes \ZZ_{(b_1)}$ with virtualization map $v$.  It is clear that it is independent of the ordering. Moreover, we may generalize to the case of finitely many tensor factors using induction and associativity of the tensor product with a similar argument as above.
\end{proof}

\begin{thm}[Recognition Theorem {\cite[Prop.~3.2.3]{KS97}}]
\label{thm:binf_recog}
Let $\g$ be a symmetrizable Kac-Moody algebra, $B$ be an abstract $U_q(\g)$-crystal, and $x_0$ be an element of $B$ with weight zero.  Assume the following conditions.
\begin{enumerate}
\item\label{rec:1} $\wt(B) \subset Q_-$.
\item\label{rec:2} $x_0$ is the unique element of $B$ with weight zero.
\item\label{rec:3} $\varepsilon_a(x_0) = 0$ for all $a \in I$.
\item\label{rec:4} $\varepsilon_a(x) \in \ZZ$ for all $x \in B$ and $a \in I$.
\item\label{rec:5} For every $a \in I$, there exists a strict crystal embedding $\Psi_a \colon B \longrightarrow \ZZ_{(a)} \otimes B$.
\item\label{rec:6} $\Psi_a(B) \subset \{ f_a^m z_a(0) \mid m \geq 0 \} \times B$.
\item\label{rec:7} For any $x \in B$ such that $x \neq x_0$, there exists $a \in I$ such that $\Psi_a(x) = f_a^m z_a(0) \otimes x'$ with $m > 0$ and $x' \in B$.
\end{enumerate}
Then $B$ is isomorphic to $B(\infty)$.
\end{thm}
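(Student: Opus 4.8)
The plan is to deduce $B \iso B(\infty)$ by transporting structure through the embeddings $\Psi_a$, exploiting that $B(\infty)$ itself satisfies conditions \eqref{rec:1}--\eqref{rec:7}. Recall from \cite{K93} that for each $a \in I$ there is a strict crystal embedding $\Psi_a^\infty \colon B(\infty) \hookrightarrow \ZZ_{(a)} \otimes B(\infty)$ with $\Psi_a^\infty(u_\infty) = z_a(0) \otimes u_\infty$ and image inside $\{ f_a^m z_a(0) \mid m \ge 0 \} \times B(\infty)$, and that $B(\infty)$ satisfies the analogue of \eqref{rec:7}: every $b \ne u_\infty$ admits some $a$ with $\Psi_a^\infty(b) = f_a^m z_a(0) \otimes b'$ for some $m > 0$. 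Since then $\wt(b') = \wt(b) + m\alpha_a$, the number $\operatorname{ht}\bigl(-\wt(b')\bigr)$ is a strictly smaller nonnegative integer than $\operatorname{ht}\bigl(-\wt(b)\bigr)$ (both are nonnegative by \eqref{rec:1}). Thus $B(\infty)$ satisfies all of \eqref{rec:1}--\eqref{rec:7}, and the theorem asserts that these axioms pin the crystal down up to isomorphism.

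First I would build a map $\Phi \colon B(\infty) \longrightarrow B$ by induction on $\operatorname{ht}\bigl(-\wt(b)\bigr)$. Set $\Phi(u_\infty) = x_0$; this is forced by \eqref{rec:2} and \eqref{rec:3}. For $b \ne u_\infty$, choose $a$ and $m > 0$ with $b = (\Psi_a^\infty)^{-1}\bigl(f_a^m z_a(0) \otimes b'\bigr)$, and -- since $\Phi(b')$ is defined by the inductive hypothesis -- put $\Phi(b) = \Psi_a^{-1}\bigl(f_a^m z_a(0) \otimes \Phi(b')\bigr)$, which makes sense by \eqref{rec:5} and \eqref{rec:6}. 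The key point, and what I expect to be the main obstacle, is that $\Phi(b)$ must not depend on the choice of $a$. Following Kashiwara--Saito, this is handled by a commutation lemma: if $a \ne a'$ both yield valid decompositions of $b$, then applying both embeddings $\Psi_a, \Psi_{a'}$ and using the tensor product rule reduces the desired equality to an identity between elements reached after two further descents, where the two routes meet because $B(\infty)$ itself obeys \eqref{rec:5}--\eqref{rec:7}; the recursion terminates because each step strictly lowers $\operatorname{ht}(-\wt)$.

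Once $\Phi$ is well defined, the remaining checks are comparatively routine. It preserves $\wt$ because $(\Psi_a^\infty)^{-1}$ and $\Psi_a^{-1}$ do, and it preserves $\varepsilon_a, \varphi_a$ and intertwines $e_a, f_a$ because $\Psi_a^\infty$ and $\Psi_a$ are strict embeddings and $\Phi$ is constructed precisely to conjugate one into the other; condition \eqref{rec:4} guarantees $\varepsilon_a$ is finite on $B$, so the signature rule behaves on both sides alike. Injectivity follows by the same downward induction: from $\Phi(b) = \Phi(c)$ one decomposes $b$ via some $a$, reads off the matching decomposition of $c$ via the same $a$ from $\Psi_a^{-1}$, and applies injectivity of $\Psi_a$ together with the inductive hypothesis to the smaller elements $b', c'$. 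Surjectivity uses \eqref{rec:7}: any $x \ne x_0$ is $\Psi_a^{-1}\bigl(f_a^m z_a(0) \otimes x'\bigr)$ with $m > 0$ and $\operatorname{ht}\bigl(-\wt(x')\bigr) < \operatorname{ht}\bigl(-\wt(x)\bigr)$, so $x' = \Phi(b')$ for some $b'$ by induction, whence $x = \Phi\bigl((\Psi_a^\infty)^{-1}(f_a^m z_a(0) \otimes b')\bigr)$, while $x_0 = \Phi(u_\infty)$. Hence $\Phi$ is a crystal isomorphism and $B \iso B(\infty)$.
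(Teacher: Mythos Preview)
The paper does not prove this theorem: it is quoted verbatim from Kashiwara--Saito \cite[Prop.~3.2.3]{KS97} and used as a black box in the proof of Theorem~\ref{thm:RCinf_folding}. So there is no ``paper's proof'' to compare against; your sketch is an attempt to reproduce the original argument from \cite{KS97}.

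Your outline is in the right spirit, but there is a genuine gap in the inductive construction of $\Phi$. You define $\Phi(b) = \Psi_a^{-1}\bigl(f_a^m z_a(0) \otimes \Phi(b')\bigr)$ and justify this by appealing to conditions~\eqref{rec:5} and~\eqref{rec:6}. But condition~\eqref{rec:6} says only that $\operatorname{im}(\Psi_a) \subset \{ f_a^m z_a(0) \mid m \ge 0 \} \times B$, not the reverse inclusion; in general the image is a proper subset (already for $B(\infty)$ itself, $z_a(0) \otimes f_a u_\infty$ is not in the image of $\Psi_a^\infty$). So it is not at all automatic that $f_a^m z_a(0) \otimes \Phi(b')$ lies in $\operatorname{im}(\Psi_a)$, and without this the definition of $\Phi(b)$ does not make sense. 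The same issue arises if you reverse direction and try to build $\Phi \colon B \to B(\infty)$ using $(\Psi_a^\infty)^{-1}$. In \cite{KS97} this is handled by a more careful analysis: one proves simultaneously, by induction on height, both that the proposed image lies in $\operatorname{im}(\Psi_a)$ and the independence of the choice of $a$ (your ``commutation lemma''), using the iterated embeddings $B \hookrightarrow \ZZ_{(a)} \otimes \ZZ_{(a')} \otimes B$ and the explicit tensor product rule to compare the two routes. You have flagged the commutation step as the crux but have not supplied the mechanism that makes either it or the image condition go through; these two points are where the actual work lies.
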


\begin{lemma}
\label{lemma:top+f}
Assume $\g$ satisfies Conjecture~{\upshape\ref{conj:foldings}}.  Then the crystal $\RC(\lambda)$ is generated by $(\nu_\emptyset,J_\emptyset)$ and $f_a$ for all $a \in I$.
\end{lemma}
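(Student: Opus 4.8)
The plan is to reduce the statement to the single claim that $(\nu_\emptyset,J_\emptyset)$ is the unique element of $\RC(\lambda)$ killed by every raising operator, and then to peel off lowering operators by induction. Granting that $(\nu,J)\ne(\nu_\emptyset,J_\emptyset)$ forces $e_a(\nu,J)\ne0$ for some $a\in I$, I would induct on $N:=\sum_{a\in I}\absval{\nu^{(a)}}$. Since an empty multipartition carries no strings, $N=0$ is exactly the case $(\nu,J)=(\nu_\emptyset,J_\emptyset)$. For $N>0$ we have $(\nu,J)\ne(\nu_\emptyset,J_\emptyset)$, so the claim provides $a$ with $(\nu',J'):=e_a(\nu,J)\ne0$; by Definition~\ref{def:rc_crystal_ops} the operator $e_a$ removes one box from $\nu^{(a)}$, so $\sum_b\absval{\nu'^{(b)}}=N-1$, and the inductive hypothesis writes $(\nu',J')=f_{i_m}\cdots f_{i_1}(\nu_\emptyset,J_\emptyset)$. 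Then crystal axiom~(4) gives $(\nu,J)=f_a(\nu',J')=f_af_{i_m}\cdots f_{i_1}(\nu_\emptyset,J_\emptyset)$. This last step presupposes that $\RC(\lambda)$ genuinely carries an abstract $U_q(\g)$-crystal structure, which holds for simply-laced $\g$ by Theorem~\ref{S06-thm} (which, as noted, applies to all simply-laced types) and for the remaining types covered by Conjecture~\ref{conj:foldings} by Proposition~\ref{prop:rc_nsl_virtual}.

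It remains to prove the claim. For simply-laced $\g$ it is immediate: Theorem~\ref{S06-thm} gives $\RC(\lambda)=X_{(\nu_\emptyset,J_\emptyset)}\iso B(\lambda)$, and since $\wt(\nu,J)=\lambda-\sum_a\absval{\nu^{(a)}}\alpha_a$ the element $(\nu_\emptyset,J_\emptyset)$ is the unique element of weight $\lambda$, hence corresponds to $u_\lambda$, the unique element of $B(\lambda)$ annihilated by all $e_a$. For non-simply-laced $\g$ satisfying Conjecture~\ref{conj:foldings} I would deduce the claim from the simply-laced case via virtualization. The conjecture furnishes a simply-laced $\virtual\g$, a folding $\phi\colon\virtual I\to I$, and factors $(\gamma_a)_{a\in I}$ for which the virtualization map $v\colon\RC(\lambda)\to\RC(\virtual\lambda)$ of~\eqref{eq:virtual_rc} is an injective crystal morphism onto the virtual crystal $V$, with $v(\nu_\emptyset,J_\emptyset)=(\virtual\nu_\emptyset,\virtual J_\emptyset)$. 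If $(\nu,J)\ne(\nu_\emptyset,J_\emptyset)$ then $v(\nu,J)\ne(\virtual\nu_\emptyset,\virtual J_\emptyset)$, so by the simply-laced case there is $\virtual b\in\virtual I$ with $\virtual e_{\virtual b}\,v(\nu,J)\ne0$, hence $\virtual\varepsilon_{\virtual b}(v(\nu,J))\ge1$. Putting $a=\phi(\virtual b)$ and using $\varepsilon_a=\gamma_a^{-1}\virtual\varepsilon_{\virtual b}$ from Definition~\ref{def:virtual} together with the fact that $v$ preserves $\varepsilon_a$, we get $\varepsilon_a(\nu,J)=\gamma_a^{-1}\virtual\varepsilon_{\virtual b}(v(\nu,J))>0$, whence $e_a(\nu,J)\ne0$.

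The part I expect to be most delicate is the logical bookkeeping rather than any computation: at this point in the paper we do not yet know $\RC(\lambda)\iso B(\lambda)$ beyond finite (and simply-laced) type, so the non-simply-laced case of the claim must be routed strictly through Conjecture~\ref{conj:foldings}/Proposition~\ref{prop:rc_nsl_virtual} — that is, through the identification of $\RC(\lambda)$ with the virtual crystal $V\subseteq\RC(\virtual\lambda)$ and the consequent compatibility of the combinatorial operators $e_a,f_a$ (and of $\varepsilon_a$) with the virtual Kashiwara operators $e_a^v,f_a^v$ — rather than through Theorem~\ref{thm:rc_nsl}. One should also note in passing that $v$ sends the empty rigged configuration to the empty rigged configuration, which is immediate from~\eqref{eq:virtual_rc}.
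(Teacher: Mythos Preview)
Your proof is correct and follows essentially the same route as the paper's: both use Conjecture~\ref{conj:foldings} to virtualize $\RC(\lambda)$ inside $\RC(\virtual\lambda)\cong B(\virtual\lambda)$ and then pull the highest-weight property back through the virtualization map. The paper's two-sentence argument simply records that $B(\virtual\lambda)$ is generated by its highest weight vector and the $\virtual f_a$ and declares that ``the statement follows,'' whereas you have unpacked the implicit content of that phrase---the uniqueness of the highest weight element, the induction on $\sum_a\lvert\nu^{(a)}\rvert$, and the passage $\virtual\varepsilon_{\virtual b}>0\Rightarrow\varepsilon_{\phi(\virtual b)}>0$ via the aligned virtual crystal structure---so your write-up is a fleshed-out version of the same proof rather than a different one.
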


\begin{proof}
By assumption, $\RC(\lambda)$ virtualizes in $\RC(\virtual{\lambda})$.  Since $\RC(\virtual{\lambda}) \cong B(\virtual{\lambda})$ and $B(\virtual{\lambda})$ is generated by its highest weight vector and $\virtual{f}_a$ for all $a \in \virtual{I}$, the statement follows.
\end{proof}

\begin{thm}
\label{thm:RCinf_folding}
Let $\g$ be a Kac-Moody algebra satisfying Conjecture~{\upshape\ref{conj:foldings}}. Then $\RC(\infty) \iso B(\infty)$ as $U_q(\g)$-crystals.
\end{thm}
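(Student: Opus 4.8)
The plan is to apply the Recognition Theorem (Theorem~\ref{thm:binf_recog}) to $B = \RC(\infty)$ with $x_0 = (\nu_\emptyset, J_\emptyset)$, so the task reduces to verifying conditions~\eqref{rec:1}--\eqref{rec:7}. The first four are immediate from what we already have. Condition~\eqref{rec:1} follows from the weight formula~\eqref{Binf_wt}, which gives $\wt(\nu,J) = -\sum_{a\in I}|\nu^{(a)}|\alpha_a \in Q_-$; condition~\eqref{rec:2} follows since $\wt(\nu,J)=0$ forces $|\nu^{(a)}| = 0$ for every $a$, hence $(\nu,J) = (\nu_\emptyset,J_\emptyset)$; condition~\eqref{rec:3} holds because $(\nu_\emptyset,J_\emptyset)$ has no strings, so $e_a(\nu_\emptyset,J_\emptyset) = 0$ and $\varepsilon_a(\nu_\emptyset,J_\emptyset) = 0$ for all $a$; and condition~\eqref{rec:4} follows from Proposition~\ref{prop:ep_phi}, which gives $\varepsilon_a(\nu,J) = -\min(0,x) \in \ZZ_{\ge 0}$, where $x$ is the smallest label of $(\nu,J)^{(a)}$ (with $\varepsilon_a = 0$ when this part is empty).

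The substance lies in constructing the strict embeddings $\Psi_a$ of condition~\eqref{rec:5} and checking~\eqref{rec:6} and~\eqref{rec:7}, and here I would reduce to the simply-laced case via virtualization. Since $\g$ satisfies Conjecture~\ref{conj:foldings}, fix the folding $\phi\colon \virtual I \longrightarrow I$ with scaling factors $(\gamma_a)_{a\in I}$ into a simply-laced type $\virtual\g$. Extending Proposition~\ref{prop:rc_nsl_virtual} to this setting (via the remark preceding Theorem~\ref{thm:binf_recog} together with passage to the direct limit, using that the bound $p_i^{(a)}(\nu)\le p_i^{(a)}(\nu')$ of Lemma~\ref{lemma:embed} still holds once $L$ is enlarged), the crystal $\RC(\infty)$ virtualizes in $\virtual\RC(\infty)$ via Equation~\eqref{eq:virtual_rc}, with image $V$. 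By Theorem~\ref{RCinf_simply-laced}, $\virtual\RC(\infty)\iso \virtual B(\infty)$, so Kashiwara's construction equips $\virtual\RC(\infty)$ with strict embeddings $\virtual\Psi_b\colon \virtual\RC(\infty)\longrightarrow \ZZ_{(b)}\otimes\virtual\RC(\infty)$ for every $b\in\virtual I$ satisfying the analogues of~\eqref{rec:6} and~\eqref{rec:7}. Composing the $\virtual\Psi_b$ over $b\in\phi^{-1}(a)$ (legitimate as these nodes are pairwise non-adjacent) yields a strict embedding $\virtual\RC(\infty)\longrightarrow \virtual\ZZ_{(a)}\otimes\virtual\RC(\infty)$ with $\virtual\ZZ_{(a)} = \bigotimes_{b\in\phi^{-1}(a)}\ZZ_{(b)}$; restricting to $V$ and using Lemma~\ref{lemma:virtual_elementary}, which identifies the virtual image $v_{(a)}(\ZZ_{(a)})$ inside $\virtual\ZZ_{(a)}$, one descends through the virtualization maps to the desired strict embedding $\Psi_a\colon\RC(\infty)\longrightarrow\ZZ_{(a)}\otimes\RC(\infty)$. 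Condition~\eqref{rec:6} descends directly from its $\virtual\g$-analogue, and condition~\eqref{rec:7}, after descending, amounts to the statement that any $(\nu,J)\neq(\nu_\emptyset,J_\emptyset)$ arises from $(\nu_\emptyset,J_\emptyset)$ by a nonempty product of $f_a$'s, which is Lemma~\ref{lemma:top+f} (in its $\RC(\infty)$ form). The Recognition Theorem then gives $\RC(\infty)\iso B(\infty)$, and tracking highest weight vectors shows $(\nu_\emptyset,J_\emptyset)\mapsto u_\infty$.

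I expect the main obstacle to be this virtualization step, in two respects. First, one must check that the map~\eqref{eq:virtual_rc} genuinely intertwines the validity-free Kashiwara operators $e_a,f_a$ on $\RC(\infty)$ with the virtual operators $e_a^v,f_a^v$ on $\virtual\RC(\infty)$, so that $V$ really is a virtual subcrystal; this is where Lemma~\ref{lemma:convexity} is needed, to keep the folded vacancy numbers under control. Second, one must verify that the composite $\prod_{b\in\phi^{-1}(a)}\virtual\Psi_b$, restricted to $V$, actually lands in $v_{(a)}(\ZZ_{(a)})\otimes V$ — that the multiplicities produced by the distinct factors $b\in\phi^{-1}(a)$ agree and are divisible by $\gamma_a$, and that the residual tensor factor stays in $V$ — so that the construction descends to a well-defined map on $\RC(\infty)$. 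This compatibility is precisely the content of virtual crystals forming a tensor category (Proposition~\ref{prop:aligned_tensor}) together with the alignment built into Lemma~\ref{lemma:virtual_elementary}.
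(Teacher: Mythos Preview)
Your proposal is correct and follows essentially the same route as the paper: verify conditions~\eqref{rec:1}--\eqref{rec:4} directly, then construct $\Psi_a$ by virtualizing into $\virtual{\RC}(\infty)$, pulling back the Kashiwara embeddings through the commutative square with $v_{(a)}\otimes v$ on the bottom (Lemma~\ref{lemma:virtual_elementary} and Proposition~\ref{prop:aligned_tensor}), and handling~\eqref{rec:6}--\eqref{rec:7} via the generation statement of Lemma~\ref{lemma:top+f}. The one place you are a bit quick is~\eqref{rec:7}: knowing $(\nu,J)=f_{a_k}\cdots f_{a_1}(\nu_\emptyset,J_\emptyset)$ is not by itself the condition---you still need the tensor-product-rule observation that, taking $a=a_1$, one has $\Psi_a\bigl(f_a(\nu_\emptyset,J_\emptyset)\bigr)=f_a z_a(0)\otimes(\nu_\emptyset,J_\emptyset)$ because $\varphi_a(\nu_\emptyset,J_\emptyset)=0$, which forces $m>0$; the paper makes this step explicit.
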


\begin{proof}
Let $\virtual{\RC}(\infty)$ denote the rigged configuration realization of the crystal $\virtual{B}(\infty)$ corresponding to the simply-laced Kac-Moody algebra $\virtual{\g}$ coming from Theorem~\ref{RCinf_simply-laced}, so that 
\[
\virtual{\RC}(\infty) = \varinjlim_{\lambda\in P^+} \bigl(T_{-\lambda} \otimes \RC(\virtual{\lambda})\bigr).
\]
From Conjecture~\ref{conj:foldings}, we have
\[
\RC(\infty) = \varinjlim_{\lambda\in P^+} \bigl(T_{-\lambda} \otimes \RC(\lambda)\bigr),
\]
for reasons similar to the justification of Theorem~\ref{thm:RCinf_nsl}.  Hence $\RC(\infty)$ virtualizes in $\virtual{\RC}(\infty)$ as in Proposition~\ref{prop:virtual_Binf}.  It remains to show that $\RC(\infty) \cong B(\infty)$ as $U_q(\g)$-crystals.

We note that~(\ref{rec:1}) and~(\ref{rec:2}) are satisfied from Equation~\eqref{Binf_wt} where $x_0 = (\nu_{\emptyset}, J_{\emptyset})$. Condition (\ref{rec:3}) is satisfied directly by the definition of $(\nu_\emptyset,J_\emptyset)$, while (\ref{rec:4}) follows from the definition of $\varepsilon_a$ on $\RC(\infty)$.  The remaining properties require virtualization.

Let $v_{(a)}$ denote the virtualization map from Lemma~\ref{lemma:virtual_elementary}. Now for each $a \in I$, define a crystal morphism $\Psi_a \colon \RC(\infty) \longrightarrow \ZZ_{(a)} \otimes \RC(\infty)$ in the following way.
Consider the following commutative diagram.
\[
\begin{tikzpicture}[xscale=5, yscale=2, text height=1.8ex, text depth=0.25ex]
\node (1) at (0,1) {$\RC(\infty)$};
\node (2) at (1,1) {$\virtual{\RC}(\infty)$};
\node (3) at (0,0) {$\ZZ_{(a)} \otimes \RC(\infty)$};
\node (4) at (1,0) {$\virtual{\ZZ}_{(a)} \otimes \virtual{\RC}(\infty)$};
\path[->,font=\scriptsize]
 (1) edge node[above,inner sep=2]{$v$} (2)
 (1) edge node[left] {$\Psi_a$} (3)
 (2) edge node[right]{$\widehat{\Psi}_a$} (4)
 (3) edge node[below,inner sep=1]{$v_{(a)} \otimes v$} (4);
\end{tikzpicture}
\]
Since both rows are virtualization maps by Proposition~\ref{prop:aligned_tensor} and map on the right side is a strict embedding because $\virtual{\RC}(\infty) \cong \virtual{B}(\infty)$ by Theorem~\ref{RCinf_simply-laced}, we get a well-defined strict embedding $\Psi_a = (v_{(a)}\otimes v)^{-1}\circ\virtual{\Psi}_a \circ v$ for every $a\in I$.

For~(\ref{rec:6}), notice the crystal $\RC(\infty)$ is generated from $(\nu_{\emptyset}, J_{\emptyset})$ and $f_a$, for $a \in I$, from the direct limit characterization of $\RC(\infty)$ and Lemma \ref{lemma:top+f}. That is to say, we can write an arbitrary element $(\nu,J)$ of $\RC(\infty)$ as $(\nu,J) = f_{a_k} \cdots f_{a_1} (\nu_{\emptyset}, J_\emptyset)$ where $a_j \in I$.  Since $\Psi_a$ is strict and $f_a^k$ is a nonzero operator on both $\ZZ_{(a)}$ and $\RC(\infty)$ for all $a\in I$ and $k\ge 0$, we have $\Psi_a\bigl(\RC(\infty)\bigr) \subset \{ f_a^m z_a(0) \mid m \geq 0 \} \times \RC(\infty)$.

Finally, set $(\nu,J) = f_{a_k} \cdots f_{a_1} (\nu_{\emptyset},J_{\emptyset})$ to be an arbitrary element of $\RC(\infty)$ and take $a = a_1$. Note that $\varphi_a(\nu_\emptyset, J_\emptyset)=0$ by Equation \eqref{Binf_phi}.  Then by the tensor product rule for crystals, we have $\Psi_a\bigl(f_a(\nu_{\emptyset}, J_{\emptyset}) \bigr) = f_a z_a(0) \otimes (\nu_{\emptyset}, J_{\emptyset})$ because $\Psi_a(\nu_\emptyset,J_\emptyset) = z_a(0) \otimes (\nu_\emptyset,J_\emptyset)$. Therefore there exists some subsequence $(a_{j_1},\dots,a_{j_{k-m}})$ of $(a_1, \dotsc, a_k)$ such that $a_1 = a_{t}$, for all $t\neq j_1,\dots,j_{k-m}$, and $\Psi_a(\nu,J) = f_a^m z_a(0) \otimes f_{a_{j_t}}\cdots f_{a_{j_1}}(\nu_{\emptyset}, J_{\emptyset})$ with $m > 0$. This shows condition (\ref{rec:7}), and we have $\RC(\infty) \iso B(\infty)$ by Theorem~\ref{thm:binf_recog}.
\end{proof}

\begin{prob}
It would be interesting to find a proof which does not appeal to virtualization in order to prove (\ref{rec:5}), (\ref{rec:6}), and (\ref{rec:7}); in particular, to show that $\RC(\infty)$ is generated only by $(\nu_\emptyset,J_\emptyset)$ and $f_a$, for all $a\in I$, without appealing to virtualization.
\end{prob}

\section{Projecting from $\RC(\infty)$ to $\RC(\lambda)$}\label{sec:projection}

The goal of this section is to show that taking valid rigged configurations is equivalent to projecting to highest weight $U_q(\g)$-crystals, where $\g$ is any symmetrizable Kac-Moody type satisfying Conjecture~\ref{conj:foldings}.  Recall the one-element crystal $T_{\lambda} = \{ t_{\lambda} \}$ given in Definition~\ref{def:T_crystal}. Let $C = \{c\}$ be the one-element crystal with crystal operations defined by
\begin{displaymath}
\wt(c) = 0, \quad \varphi_a(c) = \varepsilon_a(c) = 0, \quad f_a(c) = e_a(c) = 0, \qquad a\in I.
\end{displaymath}
It is known that the connected component in $C \otimes T_{\lambda} \otimes B(\infty)$ generated by $c \otimes t_{\lambda} \otimes u_{\infty}$ is isomorphic to $B(\lambda)$.  In the setting of rigged configurations, recall that to pass from $\RC(\infty)$ to $\RC(\lambda)$, we raise the weight by $\lambda$ (equivalently we shift the vacancy numbers), which corresponds to tensoring with $T_{\lambda}$.  Next we take only valid rigged configurations, and we will show that this restriction corresponds to tensoring with the crystal $C$.

Let $\RC_{\lambda}(\infty) = T_{\lambda} \otimes \RC(\infty)$ denote the crystal associated with the Verma module with highest weight $\lambda$.   Strictly speaking, 
\[
\RC_\lambda(\infty) = \bigl\{f_{a_k}\cdots f_{a_1}\big(t_\lambda\otimes(\nu_\emptyset,J_\emptyset)\big) \mid  a_1,\dots,a_k \in I,\ k \ge 0 \bigr\},
\] 
but by an abuse of notation, we will consider $\RC_\lambda(\infty)$ as the set of all rigged configurations generated by $f_a$ $(a \in I)$ from $(\nu_{\emptyset},J_\emptyset)$ where the vacancy numbers and the weights are shifted by $\lambda$.  That is, if $\lambda = \sum_{(a,i) \in \HH} iL_i^{(a)} \Lambda_a$ is a dominant integral weight of type $\g$, then for all $i \in \ZZ_{\geq 0}$ we have 
\begin{displaymath}
p_i^{(a)}(\nu_\lambda) = \sum_{j\ge0} \min(i,j)L_j^{(a)} + p_i^{(a)}(\nu),
\qquad
\wt(\nu_\lambda,J_\lambda) = \wt(\nu,J) + \lambda,
\end{displaymath}
where $(\nu_{\lambda}, J_\lambda) \in \RC_{\lambda}(\infty)$ corresponds to $(\nu,J) \in \RC(\infty)$.

\begin{thm}
\label{thm:projection}
Let $\mathcal{C}_\emptyset$ denote the connected component of $C\otimes \RC_\lambda(\infty)$ generated by $c \otimes (\nu_\emptyset, J_\emptyset)$. 
The map $\Psi\colon \mathcal{C}_\emptyset \longrightarrow \RC(\lambda)$ sending
\begin{math}
c \otimes (\nu_\lambda, J_\lambda) \mapsto (\nu_\lambda, J_\lambda)
\end{math} 
is a $U_q(\g)$-crystal isomorphism.
\end{thm}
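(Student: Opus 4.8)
The plan is to reduce the statement to the known realization of $B(\lambda)$ inside $C \otimes T_\lambda \otimes B(\infty)$ recalled just above, by transporting along the isomorphism $\RC(\infty) \cong B(\infty)$, and then to check that the forgetful map $\Psi$ realizes this isomorphism concretely on the level of rigged configurations. First I would use Theorem~\ref{thm:RCinf_folding}: since $\RC(\infty) \cong B(\infty)$ as $U_q(\g)$-crystals with $(\nu_\emptyset,J_\emptyset) \mapsto u_\infty$, and the tensor product is functorial, there is an induced isomorphism $C \otimes \RC_\lambda(\infty) \cong C \otimes T_\lambda \otimes B(\infty)$ which is the identity on the $C$- and $T_\lambda$-factors and sends $c \otimes (\nu_\emptyset,J_\emptyset) \mapsto c \otimes t_\lambda \otimes u_\infty$. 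Restricting to connected components, $\mathcal{C}_\emptyset$ maps isomorphically onto the connected component of $c \otimes t_\lambda \otimes u_\infty$, which is isomorphic to $B(\lambda)$. Hence $\mathcal{C}_\emptyset \cong B(\lambda)$ with $c \otimes (\nu_\emptyset,J_\emptyset) \mapsto u_\lambda$; in particular $\mathcal{C}_\emptyset$ is regular, so $\varepsilon_a, \varphi_a \geq 0$ on all of $\mathcal{C}_\emptyset$.

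The second step is to verify that $\Psi \colon \mathcal{C}_\emptyset \longrightarrow \RC(\lambda)$ is a $U_q(\g)$-crystal isomorphism. It is injective since $C = \{c\}$. Because $\wt(c) = 0$ and $\varepsilon_a(c) = \varphi_a(c) = 0$, the tensor product rule gives $\wt(c \otimes y) = \wt(y)$, $\varepsilon_a(c \otimes y) = \max(0,\varepsilon_a(y)) = \varepsilon_a(y)$ and $\varphi_a(c \otimes y) = \varphi_a(y)$ for $y \in \RC_\lambda(\infty)$, where the functions on $\RC_\lambda(\infty)$ are those of Proposition~\ref{prop:ep_phi} applied with the $\lambda$-shifted vacancy numbers; moreover $e_a(c \otimes y) = c \otimes e_a y$ unless $\varphi_a(y) < 0$, and $f_a(c \otimes y) = c \otimes f_a y$ unless $\varphi_a(y) \leq 0$. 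Inside $\mathcal{C}_\emptyset$ we have $\varphi_a(y) \geq 0$, so the $C$-factor never truncates $e_a$, and since the rigged configuration operator $e_a$ of Definition~\ref{def:rc_crystal_ops} preserves validity (the vacancy numbers change exactly so that every colabel is preserved), $\Psi$ intertwines $e_a$ and carries valid rigged configurations to valid ones. For $f_a$, the essential point is that the $C$-truncation, which fires exactly when $\varphi_a(y) = 0$, coincides with the truncation built into Definition~\ref{def:rc_crystal_ops}: when $\varphi_a(y) = 0$ the box-adding procedure of $f_a$ produces an invalid configuration, hence $f_a y = 0$ in $\RC(\lambda)$, and when $\varphi_a(y) \geq 1$ it produces a valid one, which then agrees with $f_a y$ computed in $\RC_\lambda(\infty)$. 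With the convention $c \otimes 0 = 0$, this makes $\Psi$ intertwine $f_a$ as well. Finally, by Lemma~\ref{lemma:top+f} the crystal $\RC(\lambda) = X_{(\nu_\emptyset,J_\emptyset)}$ is generated from $(\nu_\emptyset,J_\emptyset)$ by $e_a$ and the (valid) $f_a$, while $\mathcal{C}_\emptyset$ is generated from $c \otimes (\nu_\emptyset,J_\emptyset)$ by $e_a,f_a$; the intertwining relations above then show $\Psi(\mathcal{C}_\emptyset) = \RC(\lambda)$ and that $\Psi$ is an isomorphism.

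The main obstacle is the $f_a$-truncation matching in the second step: proving that $\varphi_a(y) = 0$ is equivalent to the lowering operator of Definition~\ref{def:rc_crystal_ops} yielding an invalid configuration from $y$. This is exactly the assertion that ``passing to valid rigged configurations'' corresponds to ``tensoring with $C$,'' and its proof rests on the formula $\varphi_a(y) = p_\infty^{(a)} - \min(0,x)$ from Proposition~\ref{prop:ep_phi} together with the convexity of the vacancy numbers in Lemma~\ref{lemma:convexity}, which together force the shortest (then longest) string carrying the minimal label to overtake its vacancy number precisely after $\varphi_a(y)$ successive applications of $f_a$. Everything else in the argument is either formal --- the behaviour of a singleton tensor factor, functoriality of $\otimes$ --- or is already packaged in Theorem~\ref{thm:RCinf_folding}, Proposition~\ref{prop:ep_phi} and Lemma~\ref{lemma:top+f}.
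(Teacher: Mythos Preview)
Your proof is correct and the core argument --- checking that $\Psi$ preserves $\wt$, $\varepsilon_a$, $\varphi_a$ and intertwines $e_a$, $f_a$ via the tensor product rules, with the crux being that the $C$-truncation of $f_a$ matches the validity truncation in Definition~\ref{def:rc_crystal_ops} --- is exactly the paper's approach; the paper phrases the key step as ``$\RC(\lambda)$ is a (lower) regular crystal,'' which is precisely what you trace back to Proposition~\ref{prop:ep_phi} and Lemma~\ref{lemma:convexity}. Your Step~1, invoking Theorem~\ref{thm:RCinf_folding} to first identify $\mathcal{C}_\emptyset$ abstractly with $B(\lambda)$, is not present in the paper and is not needed: the paper works directly with rigged configurations throughout and never appeals to $\RC(\infty)\cong B(\infty)$ in this proof.
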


\begin{proof} 
Let $(\nu_{\lambda}, J_{\lambda}) \in \RC_\lambda(\infty)$ and $a \in I$.  First,
\begin{displaymath}
\wt\bigl( c \otimes (\nu_{\lambda}, J_{\lambda}) \bigr)
= \wt(c) + \wt(\nu_{\lambda}, J_{\lambda})
= \wt(\nu_{\lambda}, J_{\lambda}),
\end{displaymath}
so $\Psi$ preserves weights.  Then, 
\begin{displaymath}
\varepsilon_a\bigl( c\otimes (\nu_{\lambda}, J_{\lambda}) \bigr) = \max\bigl\{ 0, \varepsilon_a(\nu_{\lambda}, J_{\lambda}) \bigr\} = \varepsilon_a(\nu_{\lambda}, J_{\lambda}),
\end{displaymath}
since $\varepsilon_a(\nu_{\lambda}, J_{\lambda}) \geq 0$, which implies that $\Psi$ preserves $\varepsilon_a$.  
From the $\varepsilon_a\bigl(c \otimes (\nu_{\lambda}, J_{\lambda})\bigr)$ computation above, we have
\begin{align*}
\varphi_a\bigl( c\otimes (\nu_\lambda,J_\lambda)\bigr) 
&= \max\bigl\{ \varphi_a(\nu_\lambda,J_\lambda), \langle h_a, \wt(\nu_\lambda,J_\lambda)\rangle \bigr\} \\
&= \max\bigl\{ \varepsilon_a(\nu_\lambda,J_\lambda) + \langle h_a,\wt(\nu_\lambda,J_\lambda)\rangle , \langle h_a, \wt(\nu_\lambda,J_\lambda) \rangle \bigr\} \\
&= \varepsilon_a(\nu_\lambda,J_\lambda) + \langle h_a,\wt(\nu_\lambda,J_\lambda)\rangle \\
&= \varphi_a(\nu_\lambda,J_\lambda).
\end{align*}
We have $\varphi_a(\nu_\lambda,J_\lambda) = 0$ if and only if $f_a(\nu_{\lambda}, J_{\lambda}) = 0$ in $\RC(\lambda)$ because $\RC(\lambda)$ is a (lower) regular crystal.  Also if $\varphi_a(\nu_{\lambda}, J_{\lambda}) = 0$, we have
\[
f_a\bigl(c \otimes (\nu_{\lambda},J_{\lambda})\bigr) = (f_a c) \otimes (\nu_{\lambda}, J_{\lambda})= 0
\]
by the tensor product rule.  Similarly if $\varphi_a(\nu_{\lambda}, J_{\lambda}) > 0$, then
\[
f_a\bigl(c \otimes (\nu_{\lambda},J_{\lambda})\bigr) = c \otimes f_a(\nu_{\lambda}, J_{\lambda}).
\]
So $\Psi \circ f_a = f_a \circ \Psi$. Recall that $\varphi_a\bigl( c \otimes (\nu_\lambda,J_\lambda)\bigr) = \varphi_a(\nu_{\lambda}, J_{\lambda}) \geq 0$; so it follows, by the tensor product rule, that
\begin{displaymath}
\Psi\bigl(e_a\bigl( c\otimes (\nu_\lambda,J_\lambda) \bigr)\bigr) = \Psi\bigl(c \otimes e_a(\nu_\lambda,J_\lambda)\bigr) = e_a(\nu_\lambda,J_\lambda) = e_a\Psi\bigl( c \otimes (\nu_\lambda,J_\lambda)\bigr).
\end{displaymath} 
This completes the proof that $\Psi$ is a crystal isomorphism.
\end{proof}

Thus, the projection map above corresponds to eliminating those rigged configurations which are not valid; that is, $\Psi(c\otimes(\nu,J)) = 0$ if $(\nu,J)$ is not valid. Therefore Theorem~\ref{thm:RCinf_folding} implies the following.

\begin{cor}
Suppose Conjecture~{\upshape\ref{conj:foldings}} holds, then we have $\RC(\lambda) \iso B(\lambda)$.
\end{cor}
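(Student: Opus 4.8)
The plan is to chain together the two main results of this section with the standard realization of $B(\lambda)$ inside $C \otimes T_\lambda \otimes B(\infty)$. Under Conjecture~\ref{conj:foldings}, Theorem~\ref{thm:RCinf_folding} supplies a $U_q(\g)$-crystal isomorphism $\RC(\infty) \cong B(\infty)$ sending $(\nu_\emptyset, J_\emptyset) \mapsto u_\infty$. Tensoring on the left by the one-element crystals $C$ and $T_\lambda$ is a functorial operation on abstract crystals, so this yields an isomorphism $C \otimes \RC_\lambda(\infty) = C \otimes T_\lambda \otimes \RC(\infty) \cong C \otimes T_\lambda \otimes B(\infty)$ carrying the distinguished element $c \otimes (\nu_\emptyset, J_\emptyset)$ to $c \otimes t_\lambda \otimes u_\infty$.

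The key steps, in order, are: (1) invoke Theorem~\ref{thm:RCinf_folding} for the base isomorphism; (2) observe that tensoring with $C$ and $T_\lambda$ preserves crystal isomorphisms and, being a bijection of underlying sets commuting with all $e_a, f_a$, carries connected components to connected components, so the component $\mathcal{C}_\emptyset$ of $c \otimes (\nu_\emptyset, J_\emptyset)$ maps isomorphically onto the component of $c \otimes t_\lambda \otimes u_\infty$ in $C \otimes T_\lambda \otimes B(\infty)$; (3) recall the known fact, stated at the opening of this section, that the latter component is isomorphic to $B(\lambda)$; and (4) apply Theorem~\ref{thm:projection}, which identifies $\mathcal{C}_\emptyset$ with $\RC(\lambda)$ via $c \otimes (\nu_\lambda, J_\lambda) \mapsto (\nu_\lambda, J_\lambda)$. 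Composing the isomorphisms from (2)--(4) gives $\RC(\lambda) \cong \mathcal{C}_\emptyset \cong B(\lambda)$, with the highest weight vector $(\nu_\emptyset, J_\emptyset)$ corresponding to $u_\lambda$.

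Since everything reduces to composing already-established isomorphisms, there is no substantial obstacle; the only point meriting a line of care is the compatibility in step (2)---namely that the isomorphism $\RC(\infty) \cong B(\infty)$, after tensoring, actually restricts to an isomorphism between the two relevant connected components rather than merely embedding one into the other. This is immediate because a crystal isomorphism induces a bijection on connected components and the distinguished generators correspond. One should also note in passing that the known identification of the component of $c \otimes t_\lambda \otimes u_\infty$ with $B(\lambda)$ is precisely where the role of $C$ enters: tensoring with $C$ truncates $T_\lambda \otimes B(\infty)$ to the elements satisfying the admissibility relation $f_a^{\inner{h_a}{\lambda} + 1} u_\lambda = 0$, which, as explained in the paragraph preceding Theorem~\ref{thm:projection}, corresponds on the rigged-configuration side to retaining exactly the valid rigged configurations, i.e. to $\RC(\lambda)$.
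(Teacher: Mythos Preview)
Your argument is correct and matches the paper's own reasoning: the corollary is stated immediately after Theorem~\ref{thm:projection} with only the sentence ``Therefore Theorem~\ref{thm:RCinf_folding} implies the following,'' and your steps (1)--(4) spell out exactly that implicit chain of isomorphisms $\RC(\lambda) \cong \mathcal{C}_\emptyset \cong B(\lambda)$ via $\RC(\infty)\cong B(\infty)$ tensored with $C\otimes T_\lambda$. The extra paragraph justifying that connected components correspond under the induced isomorphism is a reasonable elaboration but not something the paper bothers to make explicit.
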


\begin{cor}
Suppose Conjecture~{\upshape\ref{conj:foldings}} holds, then the $U_q(\g)$-crystal $B(\lambda)$ virtualizes in the $U_q(\virtual{\g})$-crystal $B(\virtual{\lambda})$.
\end{cor}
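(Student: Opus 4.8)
The plan is to assemble the virtualization map from three ingredients that are already in hand: the content of Conjecture~\ref{conj:foldings} itself, the identification $\RC(\lambda)\iso B(\lambda)$ from the corollary immediately preceding this statement, and the identification $\RC(\virtual{\lambda})\iso B(\virtual{\lambda})$ for the simply-laced ambient type $\virtual{\g}$.

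First I would record that Conjecture~\ref{conj:foldings}, which we are assuming, says precisely that there are scaling factors $(\gamma_a)_{a\in I}$ for which $\RC(\lambda)$ virtualizes in $\RC(\virtual{\lambda})$ via the explicit map of Equation~\eqref{eq:virtual_rc}; denote this virtualization map by $v_{\RC}$. Next, the preceding corollary (which rests on Theorem~\ref{thm:RCinf_folding} together with the projection Theorem~\ref{thm:projection}) provides a $U_q(\g)$-crystal isomorphism $\Phi_\lambda\colon B(\lambda)\iso\RC(\lambda)$. Since $\virtual{\g}$ is simply-laced, Theorem~\ref{S06-thm}, extended to arbitrary simply-laced types as explained in Section~\ref{sec:arbitrary_sl}, gives a $U_q(\virtual{\g})$-crystal isomorphism $\Phi_{\virtual{\lambda}}\colon \RC(\virtual{\lambda})\iso B(\virtual{\lambda})$.

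Then I would set $v := \Phi_{\virtual{\lambda}}\circ v_{\RC}\circ\Phi_\lambda\colon B(\lambda)\longrightarrow B(\virtual{\lambda})$ and check that $\bigl(v(B(\lambda)),\,B(\virtual{\lambda}),\,\phi,\,\gamma\bigr)$ satisfies Definition~\ref{def:virtual}. The Kashiwara operators $e_a,f_a$ on $B(\lambda)$ are carried by $\Phi_\lambda$ to the rigged-configuration operators, by $v_{\RC}$ to $e_a^v=\prod_{b\in\phi^{-1}(a)}\virtual{e}_b^{\,\gamma_a}$ and $f_a^v=\prod_{b\in\phi^{-1}(a)}\virtual{f}_b^{\,\gamma_a}$ on $\RC(\virtual{\lambda})$ (this is part of $v_{\RC}$ being a virtualization), and finally by $\Phi_{\virtual{\lambda}}$ to the same products of $\virtual{e}_b,\virtual{f}_b$ on $B(\virtual{\lambda})$; hence $v$ intertwines the operations as required. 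The same chain of crystal isomorphisms, together with the compatibility $\wt=\Psi^{-1}\circ\virtual{\wt}$ for $v_{\RC}$ and the fact that crystal isomorphisms preserve the weight maps \eqref{RC_weight}, shows $\wt=\Psi^{-1}\circ\virtual{\wt}$ along $v$, and therefore $\varepsilon_a=\gamma_a^{-1}\virtual{\varepsilon}_b$ and $\varphi_a=\gamma_a^{-1}\virtual{\varphi}_b$ for $b\in\phi^{-1}(a)$.

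The only point that needs care — rather than a genuine obstacle — is checking that the weight-lattice embedding $\Psi\colon P\to\virtual{P}$ used to define the virtual weight function is the one compatible with the scaling factors $(\gamma_a)$ in Equation~\eqref{eq:virtual_rc}; but this is forced by the definition of $\Psi$ on the $\Lambda_a$ and $\alpha_a$ together with Equation~\eqref{RC_weight}, so no extra argument is needed. In short, the corollary is immediate once one observes that the virtualization of rigged configurations promised by Conjecture~\ref{conj:foldings} transports, through $\RC(\lambda)\iso B(\lambda)$ and $\RC(\virtual{\lambda})\iso B(\virtual{\lambda})$, to a virtualization of $B(\lambda)$ in $B(\virtual{\lambda})$.
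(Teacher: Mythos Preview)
Your proposal is correct and is precisely the approach the paper has in mind: the corollary is stated without proof because it follows immediately by transporting the virtualization $\RC(\lambda)\hookrightarrow\RC(\virtual{\lambda})$ of Conjecture~\ref{conj:foldings} through the isomorphisms $B(\lambda)\iso\RC(\lambda)$ (the preceding corollary) and $\RC(\virtual{\lambda})\iso B(\virtual{\lambda})$ (the simply-laced case). Your explicit verification that the composite satisfies Definition~\ref{def:virtual} is more detailed than what the paper writes, but the underlying argument is the same.
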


We also note that Proposition~\ref{prop:ep_phi} extends to both $\RC(\infty)$ and $\RC(\lambda)$.

\begin{ex}
Consider $\RC(\Lambda_0)$ with $\g = A_2^{(1)}$.  The top of the crystal graph is shown in Figure \ref{fig:graph}.
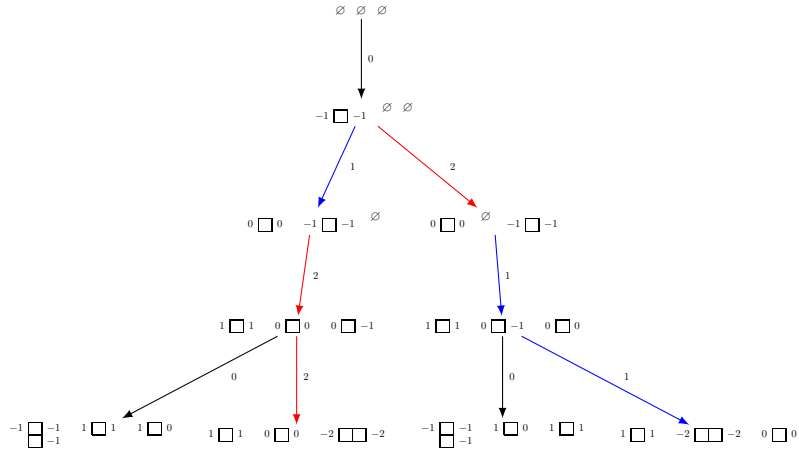
\begin{figure}[ht]
\[
\begin{tikzpicture}[>=latex,line join=bevel, xscale=.4, yscale=.5, every node/.style={scale=0.4}]
\node (11-2-200) at (667bp,16bp) [draw,draw=none] {${\begin{array}[t]{r|c|l}\cline{2-2} 1 &\phantom{|}& 1 \\ \cline{2-2} \end{array}} \quad {\begin{array}[t]{r|c|c|l}\cline{2-3} -2 &\phantom{|}&\phantom{|}& -2 \\ \cline{2-3} \end{array}} \quad {\begin{array}[t]{r|c|l}\cline{2-2} 0 &\phantom{|}& 0 \\ \cline{2-2} \end{array}}$};
  \node (///) at (340bp,336bp) [draw,draw=none] {${\emptyset}\quad{\emptyset}\quad{\emptyset}$};
  \node (11000-1) at (279bp,98bp) [draw,draw=none] {${\begin{array}[t]{r|c|l}\cline{2-2} 1 &\phantom{|}& 1 \\ \cline{2-2} \end{array}} \quad {\begin{array}[t]{r|c|l}\cline{2-2} 0 &\phantom{|}& 0 \\ \cline{2-2} \end{array}} \quad {\begin{array}[t]{r|c|l}\cline{2-2} 0 &\phantom{|}& -1 \\ \cline{2-2} \end{array}}$};
  \node (00/-1-1) at (465bp,177bp) [draw,draw=none] {${\begin{array}[t]{r|c|l}\cline{2-2} 0 &\phantom{|}& 0 \\ \cline{2-2} \end{array}} \quad{\emptyset} \quad {\begin{array}[t]{r|c|l}\cline{2-2} -1 &\phantom{|}& -1 \\ \cline{2-2} \end{array}}$};
  \node (-1-11011-1-1) at (473bp,16bp) [draw,draw=none] {${\begin{array}[t]{r|c|l}\cline{2-2} -1 &\phantom{|}& -1 \\ \cline{2-2}  &\phantom{|}& -1 \\ \cline{2-2} \end{array}} \quad {\begin{array}[t]{r|c|l}\cline{2-2} 1 &\phantom{|}& 0 \\ \cline{2-2} \end{array}} \quad {\begin{array}[t]{r|c|l}\cline{2-2} 1 &\phantom{|}& 1 \\ \cline{2-2} \end{array}}$};
  \node (1100-2-2) at (279bp,16bp) [draw,draw=none] {${\begin{array}[t]{r|c|l}\cline{2-2} 1 &\phantom{|}& 1 \\ \cline{2-2} \end{array}} \quad {\begin{array}[t]{r|c|l}\cline{2-2} 0 &\phantom{|}& 0 \\ \cline{2-2} \end{array}} \quad {\begin{array}[t]{r|c|c|l}\cline{2-3} -2 &\phantom{|}&\phantom{|}& -2 \\ \cline{2-3} \end{array}}$};
  \node (-1-1//) at (340bp,259bp) [draw,draw=none] {${\begin{array}[t]{r|c|l}\cline{2-2} -1 &\phantom{|}& -1 \\ \cline{2-2} \end{array}} \quad{\emptyset} \quad{\emptyset}$};
  \node (-1-11110-1-1) at (85bp,16bp) [draw,draw=none] {${\begin{array}[t]{r|c|l}\cline{2-2} -1 &\phantom{|}& -1 \\ \cline{2-2}  &\phantom{|}& -1 \\ \cline{2-2} \end{array}} \quad {\begin{array}[t]{r|c|l}\cline{2-2} 1 &\phantom{|}& 1 \\ \cline{2-2} \end{array}} \quad {\begin{array}[t]{r|c|l}\cline{2-2} 1 &\phantom{|}& 0 \\ \cline{2-2} \end{array}}$};
  \node (110-100) at (473bp,98bp) [draw,draw=none] {${\begin{array}[t]{r|c|l}\cline{2-2} 1 &\phantom{|}& 1 \\ \cline{2-2} \end{array}} \quad {\begin{array}[t]{r|c|l}\cline{2-2} 0 &\phantom{|}& -1 \\ \cline{2-2} \end{array}} \quad {\begin{array}[t]{r|c|l}\cline{2-2} 0 &\phantom{|}& 0 \\ \cline{2-2} \end{array}}$};
  \node (00-1-1/) at (293bp,177bp) [draw,draw=none] {${\begin{array}[t]{r|c|l}\cline{2-2} 0 &\phantom{|}& 0 \\ \cline{2-2} \end{array}} \quad {\begin{array}[t]{r|c|l}\cline{2-2} -1 &\phantom{|}& -1 \\ \cline{2-2} \end{array}} \quad{\emptyset}$};
  \draw [blue,->] (00/-1-1) ..controls (467.58bp,151.51bp) and (469.5bp,132.57bp)  .. (110-100);
  \definecolor{strokecol}{rgb}{0.0,0.0,0.0};
  \pgfsetstrokecolor{strokecol}
  \draw (478bp,136bp) node {$1$};
  \draw [red,->] (00-1-1/) ..controls (288.46bp,151.38bp) and (285.06bp,132.19bp)  .. (11000-1);
  \draw (297bp,136bp) node {$2$};
  \draw [black,->] (110-100) ..controls (473bp,76.897bp) and (473bp,57.615bp)  .. (-1-11011-1-1);
  \draw (482bp,60bp) node {$0$};
  \draw [blue,->] (-1-1//) ..controls (325.25bp,233.27bp) and (314.28bp,214.13bp)  .. (00-1-1/);
  \draw (332bp,218bp) node {$1$};
  \draw [red,->] (11000-1) ..controls (279bp,75.449bp) and (279bp,52.579bp)  .. (1100-2-2);
  \draw (288bp,60bp) node {$2$};
  \draw [black,->] (///) ..controls (340bp,317.7bp) and (340bp,298.02bp)  .. (-1-1//);
  \draw (349bp,300bp) node {$0$};
  \draw [red,->] (-1-1//) ..controls (380.77bp,232.26bp) and (413.19bp,210.99bp)  .. (00/-1-1);
  \draw (426bp,218bp) node {$2$};
  \draw [black,->] (11000-1) ..controls (225.87bp,75.542bp) and (170.85bp,52.289bp)  .. (-1-11110-1-1);
  \draw (220bp,60bp) node {$0$};
  \draw [blue,->] (110-100) ..controls (530.1bp,73.867bp) and (595.85bp,46.072bp)  .. (11-2-200);
  \draw (590bp,60bp) node {$1$};
\end{tikzpicture}
\]
\caption{The top of the crystal $\RC(\Lambda_0)$ in type $A_2^{(1)}$, created using Sage.}\label{fig:graph}
\end{figure}
\end{ex}

\appendix
\section{Calculations using Sage}\label{sec:sage}

We begin by setting up the Sage environment to give a more concise printing.

\begin{lstlisting}
sage: RiggedConfigurations.global_options(display="horizontal")
\end{lstlisting}

We construct our the rigged configuration from Example~\ref{ex:runningrig} (in the $U_q^{\prime}(\g)$ setting).

\begin{lstlisting}
sage: RC = RiggedConfigurations(['D',5,1], [[1,2], [2,1], [3,1]])
sage: hw = RC(partition_list=[[2],[1,1],[1,1],[1],[1]]); hw
0[ ][ ]0   0[ ]0   1[ ]1   0[ ]0   0[ ]0
           0[ ]0   1[ ]1
sage: elt = hw.f_string([2,3,5,3,5,4,4,1,2,3,2,3,1]); elt
-1[ ][ ]-1   1[ ][ ]1   0[ ][ ]0    0[ ][ ]0   0[ ][ ]0
-1[ ][ ]-1   1[ ][ ]1   0[ ][ ]-2   0[ ]0      0[ ]0
             1[ ]1      0[ ]0
                        0[ ]0
\end{lstlisting}
Alternatively, one could construct $(\nu,J)$ from Example~\ref{ex:runningrig} directly by specifying the partitions and corresponding labels.
\begin{lstlisting}
sage: nu = RC(partition_list=[[2,2],[2,2,1],[2,2,1,1],[2,1],[2,1]],\
....: rigging_list=[[-1,-1],[1,1,1],[0,-2,0,0],[0,0],[0,0]])
\end{lstlisting}

The crystal $\RC(\infty)$ and $\RC(\lambda)$ has been implemented by the second author in Sage.  We conclude with examples.

\pagebreak
\begin{ex}
Let $\g_0 = D_5$. 
\begin{lstlisting}
sage: RC = crystals.infinity.RiggedConfigurations("D5")
sage: nu0 = RC.highest_weight_vector()
sage: elt = nu0.f_string([4,5,2,1,4,4,3,2,4,5,5,1,3]); elt
-2[ ][ ]-1   -2[ ]-1   2[ ][ ]-1   -6[ ][ ][ ][ ]-2   -4[ ][ ][ ]-1
             -2[ ]-1                                               
sage: elt.weight()
(2, 0, 0, 5, -1)
sage: [elt.epsilon(i) for i in RC.index_set()]
[1, 1, 1, 2, 1]
sage: [elt.phi(i) for i in RC.index_set()]
[-1, 1, 6, -4, -3]
\end{lstlisting}
\end{ex}

\begin{ex}
Let $\g_0 = E_7$.
\begin{lstlisting}
sage: RC = crystals.infinity.RiggedConfigurations(['E',7])
sage: nu0 = RC.highest_weight_vector()
sage: elt = nu0.f_string([1,3,4,2,5,6,7,4]); elt
-1[ ]0   0[ ]0   1[ ]1   -1[ ]-1   1[ ]1   0[ ]0   -1[ ]-1
                         -1[ ]-1                          
sage: elt.weight()
(1/2, -1/2, 1/2, -1/2, -1/2, 1/2, -1/2, 1/2)
sage: [elt.epsilon(i) for i in RC.index_set()]
[0, 0, 0, 1, 0, 0, 1]
sage: [elt.phi(i) for i in RC.index_set()]
[-1, 0, 1, 0, 1, 0, 0]
\end{lstlisting} 
\end{ex}

\begin{ex}
Let $\g = H_1^{(4)}$.
\begin{lstlisting}
sage: cm = CartanMatrix([
....: [2,-1,-1,-1],
....: [-1,2,-1,-1],
....: [-1,-1,2,-1],
....: [-1,-1,-1,2]])
sage: RC = crystals.infinity.RiggedConfigurations(cm)
sage: RC.index_set()
(0, 1, 2, 3)
sage: nu0 = RC.highest_weight_vector()
sage: elt = nu0.f_string([0,1,2,3,2,1,2,0,3,3,3,1,2]); elt
3[ ]4   0[ ]0   1[ ][ ][ ]2   1[ ][ ][ ]-1
3[ ]2   0[ ]0   3[ ]-1        3[ ]1
        0[ ]0
sage: elt.weight()
-7*Lambda[0] - 4*Lambda[1] - Lambda[2] - Lambda[3]
sage: [elt.epsilon(i) for i in RC.index_set()]
[0, 0, 1, 1]
sage: [elt.phi(i) for i in RC.index_set()]
[7, 4, 2, 2]
\end{lstlisting}
\end{ex}

\begin{ex}
Consider $\RC(\Lambda_0)$ with $\g = A_2^{(1)}$. The followings generates the crystal graph in Figure~\ref{fig:graph}:
\begin{lstlisting}
sage: P = RootSystem(['A',2,1]).weight_lattice()
sage: La = P.fundamental_weights()
sage: RC = crystals.RiggedConfigurations(['A',2,1],La[0])
sage: nu0 = RC.highest_weight_vector()
sage: nu0.f(0)
-1[ ]-1   (/)   (/)
sage: nu0.f_string([0,1])
0[ ]0   -1[ ]-1   (/)
sage: nu0.f_string([0,1,0])
sage: nu0.f_string([0,1,1])
sage: nu0.f_string([0,1,2])
1[ ]1   0[ ]0   0[ ]-1
sage: S = RC.subcrystal(max_depth=4)
sage: G = RC.digraph(subset=S)
sage: view(G, tightpage=True)
\end{lstlisting}
\end{ex}

\section*{Acknowledgements}

We would like to thank Anne Schilling for very valuable discussions and for reading a draft of this manuscript.  We would also like to thank Sara Billey, Ben Brubaker, Dan Bump, Gautam Chinta, Sol Friedberg, Dorian Goldfeld, Jeff Hoffstein, Anne Schilling, and Nicolas Thi\'ery for organizing the ICERM semester program entitled ``Automorphic Forms, Combinatorial Representation Theory, and Multiple Dirichlet series,'' where the idea for this project originated.  This work was also aided by Sage Mathematical Software \cite{combinat,sage}, in which the second named author designed packages corresponding to the work in this paper.  Finally, the authors would like to thank the anonymous referees for there helpful comments and insight.

\bibliography{RC}{}
\bibliographystyle{amsplain}
\end{document}